\numberwithin{equation}{section}
\newcommand{\N}{\mathbb{N}}
\newcommand{\Q}{\mathbb{Q}}
\newcommand{\R}{\mathbb{R}}
\newcommand{\Z}{\mathbb{Z}}
\newcommand{\Fc}{\mathcal{F}}
\newcommand{\Lc}{\mathcal{L}}
\newcommand{\Mcc}{\mathcal{M}}
\newcommand{\Tc}{\mathcal{T}}
\newcommand{\Vc}{\mathcal{V}}
\newcommand{\zerob}{{\bf 0}}
\newcommand{\oneb}{{\mathds{1}}}
\newcommand{\Hr}{{{H}}}
\newcommand{\Lct}{{\widetilde{\Lc_d}(\Delta)}}
\newcommand{\Lcd}{{\Lc_d(\Delta)}}
\newcommand{\Ptd}{{\widetilde{P}_{\partial (\sigma_{d+1})}}}
\DeclareMathOperator{\Image}{Im}
\DeclareMathOperator{\rank}{rank}
\DeclareMathOperator{\pnt}{\raise 0.5mm \hbox{\large\bf.}}
\DeclareMathOperator{\conv}{\mathrm{conv}}
\DeclareMathOperator{\vol}{\mathrm{vol}}
\DeclareMathOperator{\nvol}{\mathrm{nvol}}
\newcommand\thankssymb[1]{\textsuperscript{\@{*}}}
\let\phi=\varphi
\newtheorem{theorem}{Theorem}[section]
\newtheorem{lem}[theorem]{Lemma}
\newtheorem{prop}[theorem]{Proposition}
\newtheorem{cor}[theorem]{Corollary}
\newtheorem{con}[theorem]{Conjecture}
\newtheorem{lem-def}[theorem]{Lemma and Definition}
\newtheorem{prop-def}[theorem]{Proposition and Definition}
\theoremstyle{defi}
\newtheorem{defi}[theorem]{Definition} 
\newtheorem{rem}[theorem]{Remark}
\newtheorem{rem-def}[theorem]{Remark and Definition}
\newtheorem{exa}[theorem]{Example}
\newtheorem{question}[theorem]{Question}
\newtheorem{problem}[theorem]{Problem}
\begin{document}
\title{Laplacian polytopes of simplicial complexes}

\author{Martina Juhnke-Kubitzke}
\address{Universit\"at Osnabr\"uck, Institut f\"ur Mathematik, 49069 Osnabr\"uck, Germany}
\email{juhnke-kubitzke@uos.de}

\author{Daniel K\"ohne} 
\address{Universit\"at Osnabr\"uck, Institut f\"ur Mathematik, 49069 Osnabr\"uck, Germany}
\email{dakoehne@uos.de}
\begin{abstract}
Given a (finite) simplicial complex, we define its \emph{$i$-th Laplacian polytope} as the convex hull of the columns of its $i$-th Laplacian matrix. This extends Laplacian simplices of finite simple graphs, as introduced by Braun and Meyer. After studying basic properties of these polytopes, we focus on the $d$-th Laplacian polytope of the boundary of a $(d+1)$-simplex $\partial(\sigma_{d+1})$. If $d$ is odd, then as for graphs, the $d$-th Laplacian polytope turns out to be a $(d+1)$-simplex in this case. If $d$ is even, we show that the $d$-th Laplacian polytope of $\partial(\sigma_{d+1})$ is combinatorially equivalent to a $d$-dimensional cyclic polytope on $d+2$ vertices. Moreover, we provide an explicit regular unimodular triangulation for the $d$-th Laplacian polytope of $\partial(\sigma_{d+1})$. This enables us to to compute the normalized volume and to show that the $h^\ast$-polynomial is real-rooted and unimodal, if $d$ is odd and even, respectively.

\end{abstract}

\maketitle



\section{Introduction}
Over decades, several lattice polytopes arising from graphs have been studied, extensively. Prominent examples include matching po\-ly\-top\-es, cut polytopes, edge polytopes, adjacency polytopes of several types, among which are symmetric edge polytopes (see e.g., \cite{lovaszplummer1986, BarahonaMahjoub1986, HerzogHibiOhsugi2018, HigashitaniJochemkoMichalek2019, OhsugiTsuchiya2021,SEP}). Following this line of research, in 2017, Braun and Meyer \cite{LaplSimplBraun2017} initiated the study of \emph{Laplacian simplices} that are defined as the convex hull of the columns of the classical Laplacian matrix of a simple graph (see also \cite{meyer2018laplacian,LapSimpDigraphs}). Since each simple graph can be seen as a $1$-dimensional simplicial complex and since to each simplicial complex, we can associate Laplacian matrices, defined via their boundary maps in simplicial homology, it is natural to extend the definition of Laplacian sim\-pli\-ces to arbitrary simplicial complexes and their Lapla\-cians. More precisely, given a simplicial complex $\Delta$ (with a fixed ordering of the vertex set) and its $i$-th Laplacian matrix $\Lc_i(\Delta)\coloneqq \partial_{i+1}\partial_{i+1}^{\intercal}+\partial_{i}^{\intercal}\partial_{i}$, we define the \emph{$i$-th Laplacian polytope $P_\Delta^{(i)}$} of $\Delta$ as the convex hull of the columns of $\Lc_i(\Delta)$. Here, $\partial_i$ and $\partial_{i+1}$ denote boundary maps in simplicial homology.

We initiate the study of Laplacian polytopes by establishing first some general combinatorial and geometric properties and then by focusing on a particular case. More precisely, we consider the situation that the underlying simplicial complex $\Delta$ is the boundary of the $(d+1)$-simplex, denoted by $\partial(\sigma_{d+1})$, and that we take its highest Laplacian $\Lc_d(\partial(\sigma_{d+1}))$. For simplicity, we set $\displaystyle{P_{\partial(\sigma_{d+1})}\coloneqq P_{\partial(\sigma_{d+1})}^{(d)}}$. If $d$ is even, it is easily seen, that, as for graphs, $P_{\partial(\sigma_{d+1})}$ is a $(d+1)$-simplex. If $d$ is odd, the situation is more complicated. By deriving a complete facet description of $P_{\partial(\sigma_{d+1})}$ in this case, we are able to show that $P_{\partial(\sigma_{d+1})}$ is combinatorially equivalent to a $d$-dimensional cyclic polytope on $d+2$ vertices. 

It was shown in \cite{LaplSimplBraun2017} that Laplacian simplices have unimodal $h^\ast$-vectors for certain classes of graphs, including trees, odd cycles and complete graphs. Inspired by these results, we study properties of the $h^\ast$-vectors of general Laplacian polytopes. This is further motivated by the general question under which conditions a lattice polytope has a unimodal $h^\ast$-vector. It was conjectured by Hibi and Ohsugi that this is true for reflexive lattice polytopes that have the integer decomposition property (IDP) \cite{HO}, and, recently, Adiprasito, Papadakis, Petrotou and Steinmeyer could confirm this conjecture in the positive \cite{Adiprasito2022}. However, it is still mysterious what happens if the polytope is not reflexive. 
We consider this question for the Laplacian polytope $P_{\partial(\sigma_{d+1})}$ of the boundary of the $(d+1)$-simplex. Even in this seemingly most simple situation, $P_{\Delta}^{(d)}$ turns out to be not reflexive and hence the mentioned results towards unimodality do not apply. However, the following result shows that $P_{\Delta}^{(d)}$ has at least the integer decomposition property.

\begin{restatable}{thmx}{TheoremA}\label{thm: r.u.t of P}
	 $P_{\partial(\sigma_{d+1})}$ has a regular unimodular triangulation for every integer $d\geq 0$.
\end{restatable}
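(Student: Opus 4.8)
The plan is to pin down the vertices of $P_{\partial(\sigma_{d+1})}$ exactly and then to write down one triangulation by hand, checking regularity and unimodularity directly. Since $\partial(\sigma_{d+1})$ has no $(d+1)$-faces we have $\partial_{d+1}=0$ and hence $\Lc_d=\partial_d^{\intercal}\partial_d$; labelling the $d$-faces by $F_k=\{0,\dots,d+1\}\setminus\{k\}$ one computes $(\Lc_d)_{kk}=d+1$ and $(\Lc_d)_{jk}=(-1)^{j+k+1}$ for $j\neq k$, so that the $k$-th column is $c_k=(d+2)e_k+(-1)^{k+1}w$ with $w=\sum_j(-1)^je_j$. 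All $c_k$ lie in $w^{\perp}$ and satisfy the single relation $\sum_k(-1)^kc_k=0$ (this is just $\Lc_d w=0$); this relation is an \emph{affine} dependence exactly when $d$ is even, which is what reduces the dimension by one and produces the cyclic polytope, whereas for $d$ odd it is merely linear and the $c_k$ span a $(d+1)$-simplex, recovering the dichotomy established earlier. I would fix once and for all the lattice $\Lambda=\Z^{d+2}\cap\mathrm{aff}(P_{\partial(\sigma_{d+1})})$ together with an explicit $\Z$-basis (for instance $e_j+e_{j+1}$, $0\le j\le d$, in the odd case, where $\mathrm{aff}(P_{\partial(\sigma_{d+1})})=w^{\perp}$), and measure all subsequent determinants with respect to it.

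The key quantitative point is that $P_{\partial(\sigma_{d+1})}$ is very far from being unimodular: its normalized volume is $(d+2)^{d}$ (I have checked the values $3$ and $16$ for $d=1,2$ directly, and a simplicial Matrix--Tree computation should give the general formula). In particular the simplices spanned by the $c_k$ alone are badly non-unimodular, so any unimodular triangulation must make essential use of interior and boundary lattice points. I would therefore first list all lattice points of $P_{\partial(\sigma_{d+1})}$ from the parametrization $x_i=(d+2)\lambda_i+\big(\sum_k\lambda_k(-1)^{k+1}\big)(-1)^i$ of an affine combination $\sum_k\lambda_kc_k$, and then take as candidate triangulation $\Tc$ the placing (lexicographic) triangulation of all these lattice points in a carefully chosen order. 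Any placing triangulation is automatically regular, since it is induced by the heights $h(p_i)=M^{i}$ for $M\gg0$, so the entire content of the theorem is pushed into the unimodularity of the cells and into choosing the insertion order so that these cells come out unimodular.

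The main obstacle, and the place where all the work sits, is exactly this unimodularity of the maximal cells, made uniform in $d$. Concretely, for each top-dimensional cell of $\Tc$ I would compute the determinant of its edge vectors in the fixed basis of $\Lambda$ and show it equals $\pm1$; the difficulty is that these cells are built from interior lattice points whose coordinates carry the factor $d+2$ and the alternating vector $w$, so the determinants do not telescope on their own and must be organized with care, most plausibly by an induction that removes one lattice point at a time in the same order used to define the placing triangulation, so that the unimodularity of the whole triangulation is proved in a single sweep. A secondary, purely bookkeeping, difficulty is that the even and odd cases live in affine hulls of different codimension inside $\Z^{d+2}$; I would run the two parities in parallel, the only change being the description of $\Lambda$ and of the defining equations of $\mathrm{aff}(P_{\partial(\sigma_{d+1})})$. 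Once every cell of $\Tc$ is shown to be unimodular, the theorem follows for all $d\ge0$, and the same triangulation is precisely what is needed afterwards to compute the normalized volume and the $h^{\ast}$-polynomial.
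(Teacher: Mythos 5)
Your setup is fine --- the column description $c_k=(d+2)e_k+(-1)^{k+1}w$, the parity dichotomy for the affine dependence, and the value $(d+2)^d$ for the normalized volume all agree with what the paper proves (Theorem~\ref{thm: Lap Mat Thm Boundary}, Lemma~\ref{thm: dimension}, Corollary~\ref{cor: normalized volume}) --- but the proof has a genuine gap exactly where you yourself locate ``all the work'': the unimodularity of the cells is never established, and the strategy you propose gives no handle on it. A placing triangulation on a carefully ordered point set is indeed automatically regular, but regularity is the cheap half; its maximal cells are in general \emph{not} unimodular, and no insertion order is specified, let alone shown to exist, for which they would be. This is not a bookkeeping issue one can expect to resolve by ``an induction that removes one lattice point at a time'': even a \emph{full} triangulation (one using every lattice point of the polytope) can have non-unimodular cells in dimension $\geq 3$, since empty lattice simplices of arbitrarily large normalized volume exist (Reeve's simplices); so using all lattice points buys nothing by itself, and the determinant computation you defer is precisely the content of the theorem, not a verification step. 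As written, the argument proves only that $P_{\partial(\sigma_{d+1})}$ has \emph{some} regular triangulation, which is true of every point configuration and is vacuous here.

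For comparison, the paper circumvents cell-by-cell determinant checks entirely by exhibiting structured triangulations whose unimodularity is inherited from known building blocks. For $d$ odd, $\widetilde{P}_{\partial(\sigma_{d+1})}$ is shown (Proposition~\ref{prop: facets as join}(b)) to be unimodularly equivalent to a join $\big((d+2)\Delta_{\frac{d+1}{2}}-2\cdot\oneb\big)\ast\big((d+2)\Delta_{\frac{d-1}{2}}-2\cdot\oneb\big)$, and the edgewise subdivisions $\mathrm{esd}_{d+2}$ of the two dilated unimodular simplices are regular and unimodular, hence so is their join (Theorem~\ref{thm: reg unimod triangulation of join}). For $d$ even, the key object is the interior polytope $Q_{\partial(\sigma_{d+1})}$: it is reflexive after translation and satisfies $2\cdot(Q_{\partial(\sigma_{d+1})}-\oneb)=\widetilde{P}_{\partial(\sigma_{d+1})}-\oneb$ (Theorem~\ref{thm: description interior polytope}); each facet of $Q_{\partial(\sigma_{d+1})}$ is again a join of dilated simplices, these facet triangulations glue consistently, coning over the unique interior lattice point $\oneb$ gives a triangulation of $Q_{\partial(\sigma_{d+1})}$ whose regularity requires a genuine lifting-function argument (the $h_\epsilon=1-\epsilon g$ perturbation), and finally the dilation theorem \cite[Theorem 4.8]{UnimodularTriangulations} transfers the triangulation to $\widetilde{P}_{\partial(\sigma_{d+1})}$. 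If you want to salvage your approach, you would either need to prove a squarefree Gr\"obner basis statement for the toric ideal (which yields a regular unimodular triangulation directly) or find the join structure above; some structural input of this kind is unavoidable, because nothing in the placing construction sees the factor $d+2$ in the coordinates that you correctly identify as the source of the difficulty.
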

We note that, combined with \cite[Theorem 1.3]{Athanasiadis}, this result implies that the $h^\ast$-vector of $P_{\partial(\sigma_{d+1})}$ is decreasing in its second half which is obviously implied by but weaker than uni\-modality. The main ingredient for \Cref{thm: r.u.t of P} is the so-called \emph{interior polytope} of  $P_{\partial(\sigma_{d+1})}$, that is defined as the convex hull of the interior lattice points of $P_{\partial(\sigma_{d+1})}$. Indeed, this polytope turns out to be reflexive (after translation to the origin) and miraculously,  $P_{\partial(\sigma_{d+1})}$ happens to be the second dilation of it (after translating both polytopes to the origin). Using edgewise subdivisions, we provide an explicit construction of a regular unimodular triangulation for the interior polytope which then extends to such a triangulation of $P_{\partial(\sigma_{d+1})}$ by \cite[Theorem 4.8]{UnimodularTriangulations}. As a byproduct, we can also compute the normalized volume of $P_{\partial(\sigma_{d+1})}$ (see \Cref{cor: normalized volume}). \Cref{thm: r.u.t of P} combined with the results on the interior polytope enables us to show the following statement:
\begin{restatable}{thmx}{TheoremB}\label{thm: unimodality real-rooted}
    \begin{itemize}
		\item[(a)] $h^\ast\left(P_{\partial(\sigma_{d+1})};t\right)$ has only real roots if $d\in\N$ is odd.
		\item[(b)] $h^\ast\left(P_{\partial(\sigma_{d+1})}\right)$ is unimodal with peak in the middle for every $d\in\N$.
	\end{itemize}
\end{restatable}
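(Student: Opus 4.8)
The plan is to deduce both statements from the explicit regular unimodular triangulation furnished by \Cref{thm: r.u.t of P}, together with the structural fact that $P_{\partial(\sigma_{d+1})}$ is (up to translation) the second dilate of a reflexive \emph{interior polytope} $Q$. Recall that a unimodular triangulation gives a combinatorial formula for the $h^\ast$-polynomial: if $\Delta_{\mathrm{tri}}$ is the triangulation, then $h^\ast\bigl(P_{\partial(\sigma_{d+1})};t\bigr)=h(\Delta_{\mathrm{tri}};t)$, the $h$-polynomial of the triangulation as a simplicial complex. So the first step is to read off, from the explicit edgewise-subdivision construction, the $h$-vector (equivalently the $f$-vector) of the triangulation. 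Because the triangulation is built by iterated edgewise subdivision of a simplex-like object, I expect the resulting $h^\ast$-polynomial to have a clean closed form, presumably an Eulerian-type or binomial-type expression in $d$; pinning down this closed form is the central computation.

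For part (a), with $d$ odd, I would aim to identify $h^\ast\bigl(P_{\partial(\sigma_{d+1})};t\bigr)$ with a polynomial whose real-rootedness is already known or easily establishable. The most likely candidates are (rescaled) Eulerian polynomials or the $h$-polynomials of balanced/edgewise-subdivided simplices, all of which are classically real-rooted. The cleanest route is to exhibit an explicit factorization or a known combinatorial identity, and then invoke a standard real-rootedness result. If a direct identification fails, the fallback is to show the $h^\ast$-polynomial satisfies a recursion compatible with interlacing (a sequence of interlacing polynomials whose successive sums stay real-rooted), or to verify that the coefficient sequence is a P\'olya frequency sequence. The parity restriction to odd $d$ strongly suggests the even case genuinely fails real-rootedness, so I would not try to prove more than unimodality there.

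For part (b), unimodality with peak in the middle, I would exploit the reflexivity of the interior polytope $Q$. Since $P_{\partial(\sigma_{d+1})}=2Q$ (after translation) and $Q$ is reflexive with a regular unimodular triangulation, $Q$ is IDP reflexive, so by \cite{Adiprasito2022} its $h^\ast$-vector $h^\ast(Q;t)$ is unimodal and, being reflexive, palindromic. The task is then to relate $h^\ast(P_{\partial(\sigma_{d+1})};t)=h^\ast(2Q;t)$ to $h^\ast(Q;t)$. For dilations there is a standard transform expressing the $h^\ast$-polynomial of $cQ$ in terms of that of $Q$ via the Ehrhart series, and I expect the factor-of-two dilation to preserve unimodality and symmetry of the peak; concretely, the symmetry of $h^\ast(Q)$ forces the Ehrhart polynomial of $Q$ to have a palindromic structure that, under $t\mapsto 2$ dilation, yields a centrally peaked $h^\ast$-vector for $P_{\partial(\sigma_{d+1})}$. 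Alternatively, and perhaps more robustly, I would read unimodality directly off the explicit triangulation: edgewise subdivisions are known to produce $h$-vectors that are unimodal (indeed they tend to send nonnegative $h$-vectors to unimodal, even $\gamma$-nonnegative ones), so a self-contained argument via the triangulation avoids relying on dilation identities at all.

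The main obstacle I anticipate is extracting the exact closed form of $h^\ast\bigl(P_{\partial(\sigma_{d+1})};t\bigr)$ from the edgewise-subdivision description in a way uniform in $d$ and tracking the parity of $d$ carefully: the qualitative difference between the odd case (full real-rootedness) and the even case (only unimodality) must emerge cleanly from this formula, so the bookkeeping of the triangulation's face numbers—rather than any single hard inequality—is where the real work lies. Once the closed form is in hand, both the real-rootedness in the odd case and the centrally-peaked unimodality in all cases should follow from standard results on Eulerian-type polynomials and on $h^\ast$-vectors of IDP reflexive polytopes.
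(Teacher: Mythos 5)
Your plan for part (a) is essentially the paper's argument, just stated more cautiously than necessary: no closed form for the $h^\ast$-polynomial is needed. The triangulation from \Cref{thm: r.u.t of P} is, for odd $d$, a \emph{join} $\mathrm{esd}_{d+2}\bigl(\Delta_{\frac{d+1}{2}}\bigr)\ast \mathrm{esd}_{d+2}\bigl(\Delta_{\frac{d-1}{2}}\bigr)$, and $h$-polynomials are multiplicative under joins, so $h^\ast\bigl(P_{\partial(\sigma_{d+1})};t\bigr)$ factors as the product of the two $h$-polynomials of edgewise subdivisions, each of which is real-rooted by \cite[Corollary 4.4]{VeroneseConstruction}. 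The ``explicit factorization'' you mention as the cleanest route is exactly this, and it is immediate from the join structure --- the bookkeeping of face numbers you anticipate as the real work never arises.

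Part (b) for even $d$, however, has a genuine gap. The sentence ``I expect the factor-of-two dilation to preserve unimodality and symmetry of the peak'' is precisely the nontrivial content of the paper, not a standard fact, and the symmetry half of that expectation is actually \emph{false}: the paper remarks after its proof that $h^\ast\bigl(P_{\partial(\sigma_{d+1})}\bigr)$ is not palindromic even though $h^\ast\bigl(Q_{\partial(\sigma_{d+1})}\bigr)$ is, so no naive transfer of the palindromic structure under dilation can work. What the paper actually proves is a bespoke inequality: using the dilation transform \eqref{eqn: h i star 2nd dilation}, $h_i^\ast(2Q)=\sum_{j}\binom{d+1}{2i-j}h_j^\ast(Q)$, it shows via the sign-symmetry of the coefficients $r_j=\binom{d+1}{2i+2-j}-\binom{d+1}{2i-j}$ (\Cref{lem: symmetrie r_j}) that this transform sends \emph{any} nonnegative symmetric unimodal sequence to one that is nondecreasing up to index $\lfloor\frac{d+1}{2}\rfloor$ (\Cref{lem: first half h 2nd dilation increasing}); this only controls the first half, and the decreasing second half comes from a separate input, namely \cite[Theorem 1.3]{Athanasiadis} applied to the regular unimodular triangulation of $P_{\partial(\sigma_{d+1})}$ itself. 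Your proposal cites neither mechanism, so as written it does not close. Your fallback --- reading unimodality ``directly off the triangulation'' because edgewise subdivisions tend to produce unimodal $h$-vectors --- also fails here: for even $d$ the triangulation of $P_{\partial(\sigma_{d+1})}$ is not an edgewise subdivision but the dilation of a cone over the boundary triangulation of $Q_{\partial(\sigma_{d+1})}$, and no standard result yields unimodality for that configuration. (Your use of \cite{Adiprasito2022} to get unimodality of $h^\ast(Q_{\partial(\sigma_{d+1})})$ is legitimate, since $Q_{\partial(\sigma_{d+1})}$ is IDP reflexive by \Cref{thm: description interior polytope} and \Cref{thm: r.u.t of P}, though the paper gets the same conclusion from the older result \cite{BrunsRoemer2007}.)
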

We note that if $d$ is odd, then the statement in $(b)$ is just an easy consequence of the one in $(a)$. We conjecture $(a)$ to be true also if $d$ is even.

The paper is organized as follows. \Cref{sec: pre} provides necessary background on simplicial complexes, their Laplacian matrices and lattice polytopes. \Cref{sec: LapMat} collects basic properties of the Laplacian matrix $\Lc_d(\partial(\sigma_{d+1}))$ of the boundary of a simplex.  In \Cref{sec: LapPol}, we introduce the $i$-th Laplacian polytope $P^{(i)}_{\Delta}$
of a simplicial complex $\Delta$. Among others, we compute its number of vertices (\Cref{prop: vertices P(k)}), the dimension of $P_{\partial(\sigma_{d+1})}$ (\Cref{thm: dimension}) and show that $P_{\partial(\sigma_{d+1})}$ is always simplicial (\Cref{thm: simplicial}). The goal of \Cref{sec:combinatorial type} is to derive a complete facet description of $P_{\partial(\sigma_{d+1})}$ and to show that it is combinatorially equivalent to a $d$-dimensional cyclic polytope on $d+2$ vertices if $d$ is even (\Cref{thm: complete facet discription} and \Cref{thm: combinatorial type}). \Cref{sec:rut and h} is devoted to the proofs of Theorems \ref{thm: r.u.t of P} and \ref{thm: unimodality real-rooted}, including the construction and study of the interior polytope of $P_{\partial(\sigma_{d+1})}$. Finally, in \Cref{sec: Questions} we state some open problems and possible future directions.

\section{Preliminaries}\label{sec: pre}
In this section, we provide the necessary background on  simplicial complexes, Laplacian matrices and polytopes. For more information on these topics we refer to \cite{Ziegler, Grunbaum, Munkers84, triangulationsDeLoera, UnimodularTriangulations, Goldberg}. Moreover, we assume the reader to have basic knowledge about graphs (see e.g., \cite{Diestel}).

\subsection{Simplicial complexes and Laplacian matrices}
Given a finite set $V$, a \emph{simplicial com\-plex} $\Delta$ on vertex set $V$ is
a collection of subsets of $V$ that is closed under inclusion. Elements of $\Delta$ are called \emph{faces} and inclusion-wise maximal faces are called \emph{facets}. The \emph{dimension} of a face $F$ is $\dim(F)\coloneqq|F|-1$ and we use $F_i(\Delta)$ to denote the set of $i$-dimensional faces of $\Delta$. The dimension of $\Delta$ is defined as $\dim(\Delta)\coloneqq  \max(i~:~ F_i(\Delta)\neq \emptyset)$. If all facets have the same dimension, $\Delta$ is called \emph{pure}. $0$-dimensional and $1$-dimensional faces of $\Delta$ are called \emph{vertices} and \emph{edges}, respectively. The sets of vertices and edges of $\Delta$ induce a graph in a natural way, which we call the \emph{$1$-skeleton} or \emph{graph} of $\Delta$.  
 Given a $(d-1)$-dimensional simplicial complex $\Delta$, its \emph{$f$-vector} $f(\Delta)=(f_{-1}(\Delta),f_0(\Delta),\ldots,f_{d-1}(\Delta))$ is defined by $f_i(\Delta)\coloneqq|\{f\in\Delta~:~\dim(F)=i\}|$ for $-1\le i\le d-1$ and its \emph{$h$-vector} $h(\Delta)=(h_0(\Delta),h_1(\Delta),\ldots,h_d(\Delta))$ by the polynomial identity
  \begin{equation}\label{eqn: h- and f-vector relation}
\sum_{k=0}^{d}h_k(\Delta) t^{d-k}=\sum_{k=0}^{d}f_{k-1}(\Delta) (t-1)^{d-k}.
\end{equation}
 The polynomials $f(\Delta;t)\coloneqq\sum_{i=-1}^{d-1}f_i(\Delta)t^i$ and $h(\Delta;t)\coloneqq\sum_{i=0}^{d}h_i(\Delta)t^i$ are called the  \emph{$f$}- and \emph{$h$-polynomial} of $\Delta$, respectively.

In order to introduce general Laplacian matrices of a simplicial complex $\Delta$, we need to recall basic notions from simplicial homology. For this purpose, let $\Delta$ be a $(d-1)$-dimensional simplicial complex on vertex set $V$ and assume that the vertices are ordered. Without loss of generality, assume $V=[n]=\{1,\ldots,n\}$ endowed with the natural ordering induced by $\N$. We denote by $C_i(\Delta)$ the $\Q$-vector space with basis $\{e_{\sigma}~:~\sigma\in F_i(\Delta)\}$ and set $C_{i}(\Delta)=\{0\}$ for $i\leq -1$ and $i> d-1$. The \emph{$i$-th boundary map} is the linear map $\partial_i\colon C_i(\Delta)\to C_{i-1}(\Delta)$ defined by 
\begin{equation} \label{eqn: Randabbildung}
    \partial_i(e_\sigma) \coloneqq \sum_{k=1}^{i+1}(-1)^{k-1}e_{\sigma\setminus\{j_k\}},
\end{equation}
where $\sigma=\{j_1<\cdots<j_{i+1}\}\in F_i(\Delta)$. By abuse of notation, we will use $\partial_i$ to denote both, the map and its corresponding matrix. The \emph{$i$-th Laplacian matrix of $\Delta$} is defined as $\Lc_i(\Delta)\coloneqq \partial_{i+1}\partial_{i+1}^{\intercal}+\partial_{i}^{\intercal}\partial_{i}$. Note that $\Lc_i(\Delta)$  provides an endomorphism of $C_i(\Delta)$ which depends on the chosen ordering of the vertices. We recall that $\Hr_i(\Delta;\Q)\coloneqq \ker (\partial_i)/\Image (\partial_{i+1})$ is the \emph{$i$-th (simplicial) homology group} of $\Delta$. 

To provide an explicit description of $\Lc_i(\Delta)$, we need some further notation. 
Faces $F,G\in F_i(\Delta)$ are called \emph{lower adjacent} if $F\cap G\in F_{i-1}(\Delta)$. If, additionally, $e_{F\cap G}$ appears with the same sign in $\partial_i(e_F)$ and $\partial_i(e_G)$, we call  $F\cap G$ the \emph{similar common lower simplex} of $F$ and $G$. Otherwise, $F\cap G$ is referred to as the  \emph{dissimilar common lower simplex} of $F$ and $G$. The \emph{upper degree} of $F\in F_i(\Delta)$, denoted $\deg_U(F)$, is the number of $(i+1)$-faces of $\Delta$ containing $F$. We will use the following description of $\Lc_i(\Delta)$ from \cite[Theorem 3.3.4]{Goldberg}: 
\begin{theorem}\label{thm: LapMat Thm}
Let $\Delta$ be a simplicial complex on vertex set $[n]$, ordered $1<\cdots <n$, and let $i\in \N$ with $0\le i\le \dim(\Delta)$. For $F, G\in F_i(\Delta)$, let $\ell_{F,G}$ denote the entry of $\Lc_i(\Delta)$ in row and column corresponding to $F$ and $G$, respectively. Then,  $\Lc_i(\Delta)$ is symmetric. Moreover: 
\begin{itemize}
\item[(i)] If $i=0$, then $\ell_{F,G}=\deg_U(F)$ if $F=G$, $\ell_{F,G}=-1$ if $F\cup G\in F_{i+1}(\Delta)$, and $\ell_{F,G}=0$, otherwise. 
\item[(ii)] If $i>0$, then 
\begin{equation*}
			\ell_{F,G}=\begin{cases}
		\deg_U(F)+i+1, & \mbox{if $F=G$,}\\
			1, & \mbox{if $F\neq G$, $F\cup G\notin F_{i+1}(\Delta)$, $F\cap G\in F_{i-1}(\Delta)$ similar}\\
			-1, & \mbox{if $F\neq G$, $F\cup G\notin F_{i+1}(\Delta)$, $F\cap G\in F_{i-1}(\Delta)$ dissimilar}\\
					0, & \mbox{otherwise.} \\
		\end{cases}
		\end{equation*}
  \end{itemize}
\end{theorem}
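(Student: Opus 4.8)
The plan is to exploit the additive structure of the Laplacian by writing $\Lc_i(\Delta) = L^{\mathrm{up}} + L^{\mathrm{down}}$, where $L^{\mathrm{up}} = \partial_{i+1}\partial_{i+1}^{\intercal}$ is the \emph{up-Laplacian} and $L^{\mathrm{down}} = \partial_i^{\intercal}\partial_i$ is the \emph{down-Laplacian}, and to compute the $(F,G)$-entry of each summand directly from the boundary formula \eqref{eqn: Randabbildung}. Symmetry of $\Lc_i(\Delta)$ is immediate, since both $\partial_{i+1}\partial_{i+1}^{\intercal}$ and $\partial_i^{\intercal}\partial_i$ are of the form $MM^{\intercal}$. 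For the remaining assertions I would fix $F,G\in F_i(\Delta)$ and introduce, for a face $\sigma$ containing a vertex $v$, the position of $v$ in the increasing order of $\sigma$; by \eqref{eqn: Randabbildung} every incidence sign that occurs is then $(-1)^{(\text{position})-1}$, so the whole proof reduces to parity bookkeeping of such positions.

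For the up-Laplacian one has $(L^{\mathrm{up}})_{F,G}=\sum_{\tau\in F_{i+1}(\Delta)}(\partial_{i+1})_{F,\tau}(\partial_{i+1})_{G,\tau}$. If $F=G$ each $\tau\supseteq F$ contributes $(\pm 1)^2=1$, giving the diagonal value $\deg_U(F)$. If $F\neq G$, a summand is nonzero only when $\tau$ contains both $F$ and $G$; since $|F\cup G|\ge i+2=|\tau|$, this forces $\tau=F\cup G$, so the term survives exactly when $F\cup G\in F_{i+1}(\Delta)$ and is then the single product of the two incidence signs. Dually, $(L^{\mathrm{down}})_{F,G}=\sum_{\rho\in F_{i-1}(\Delta)}(\partial_i)_{\rho,F}(\partial_i)_{\rho,G}$: the diagonal equals $i+1$ (the number of codimension-one faces of $F$, all of which lie in $\Delta$), while for $F\neq G$ the only possible common facet is $\rho=F\cap G$, present precisely when $F\cap G\in F_{i-1}(\Delta)$. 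The sign of this surviving down-term is $(-1)^{s+t}$, where $s$ and $t$ are the positions of the vertex $F\setminus G$ in $F$ and of $G\setminus F$ in $G$; comparing with the definition, it is $+1$ exactly when $e_{F\cap G}$ occurs with equal sign in $\partial_i(e_F)$ and $\partial_i(e_G)$, i.e.\ when $F\cap G$ is the \emph{similar} common lower simplex, and $-1$ in the \emph{dissimilar} case. This already yields the two $\pm 1$ entries in (ii).

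The step I expect to be the crux is reconciling the overlap: whenever $F\cup G\in F_{i+1}(\Delta)$ one automatically has $F\cap G\in F_{i-1}(\Delta)$, so both $L^{\mathrm{up}}$ and $L^{\mathrm{down}}$ contribute a (generically nonzero) off-diagonal term, yet the theorem records $\ell_{F,G}=0$ there. The plan is to show these two terms cancel by a parity argument: writing $\alpha,\beta$ for the positions of $F\setminus G$ and $G\setminus F$ inside $\tau=F\cup G$, the up-term equals $(-1)^{\alpha+\beta}$, and passing from $\tau$ to $F$ (resp.\ $G$) deletes exactly one of these two vertices, which shifts the relevant position by one and gives $s+t=\alpha+\beta-1$ in either ordering of the two vertices. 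Hence the down-term $(-1)^{s+t}=-(-1)^{\alpha+\beta}$ cancels the up-term. Getting this shift-by-one accounting correct in all orderings is the only delicate point.

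Finally I would assemble the cases. For $i>0$: the diagonal is $\deg_U(F)+(i+1)$; off the diagonal, if $F\cup G\in F_{i+1}(\Delta)$ the entry vanishes by the cancellation above, if instead $F\cup G\notin F_{i+1}(\Delta)$ but $F\cap G\in F_{i-1}(\Delta)$ only the down-term survives and gives $\pm 1$ according to similar/dissimilar, and otherwise no term survives and $\ell_{F,G}=0$. The case $i=0$ is genuinely different and simpler, because $\partial_0=0$ forces $L^{\mathrm{down}}=0$; thus the diagonal is merely $\deg_U(F)$ (with no $+1$ summand) and the off-diagonal up-term computes to $-1$ whenever $F\cup G\in F_1(\Delta)$, which is exactly statement (i).
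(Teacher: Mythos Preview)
The paper does not prove this theorem; it is quoted verbatim from \cite[Theorem 3.3.4]{Goldberg} and used as a black box. So there is no ``paper's own proof'' to compare against.

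That said, your argument is correct and is essentially the standard one. The decomposition $\Lc_i=L^{\mathrm{up}}+L^{\mathrm{down}}$ together with the observation that off-diagonal contributions to each summand come from a \emph{unique} cofacet $\tau=F\cup G$ (resp.\ facet $\rho=F\cap G$) is exactly the right structure. Your parity computation for the cancellation is also correct: with $p,q$ the positions of $G\setminus F$ and $F\setminus G$ in $\tau$, one gets $(L^{\mathrm{up}})_{F,G}=(-1)^{p+q}$, and deleting one of the two distinguished vertices to pass to $F$ or $G$ shifts exactly one of the two positions by one, so $s+t=p+q-1$ in either ordering and $(L^{\mathrm{down}})_{F,G}=(-1)^{p+q-1}$, giving the claimed cancellation. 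The $i=0$ case is handled correctly because the paper sets $C_{-1}(\Delta)=\{0\}$, so indeed $\partial_0=0$ and $L^{\mathrm{down}}$ vanishes; the single surviving up-term then has sign $(-1)^{0}\cdot(-1)^{1}=-1$.

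One small point worth making explicit in a write-up: in the ``otherwise'' clause of (ii) you should note that $F\cap G\notin F_{i-1}(\Delta)$ forces $|F\cap G|<i$ (since any subset of a face is a face), hence $|F\cup G|>i+2$, so no $(i{+}1)$-face can contain both $F$ and $G$ and the up-term vanishes as well. You gesture at this but do not say it outright.
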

Note that if $i=0$ in the previous theorem, then $\Lc_0(\Delta)$ coincides with the classical Laplacian matrix of the graph of $\Delta$ (from graph theory). 

\subsection{(Lattice) polytopes} 

A \emph{polytope} $P$ is the convex hull of finitely many points in $\R^d$. If $\dim P=k$, we call $P$ a \emph{$k$-polytope}. A linear inequality $a^{\intercal}x\le b$ for $a\in \R^d$ and $b\in \R$  is called a \emph{valid inequality} for $P$ if $a^{\intercal}y\leq b$ for  all $y\in P$. A (proper) \emph{face} of $P$ is a (non-empty) set of the form $P\cap\{x\in\R^d~:~a^{\intercal}x=b\}$ for some valid inequality $a^{\intercal}x\le b$ with $a\neq \zerob$. Faces of dimension $0$, $\dim P-2$ and $\dim P-1$  are called \emph{vertices}, \emph{ridges} and \emph{facets}, respectively. We use $\Vc(P)$ and $\Fc(P)$ to denote the set of vertices and facets of $P$, respectively. A valid inequality $a^{\intercal}x\le b$ is \emph{facet-defining} if  $F=P\cap \{x\in\R^d~ :~ a^{\intercal}x= b\}$ for some $F\in\Fc(P)$. The \emph{facet-ridge graph} $G(P)$ of $P$ is  the graph on vertex set $\Fc(P)$ where $\{F,G\}$ is an edge if and only if $F$ and $G$ intersect in a ridge.  If $\Vc(P)\subseteq \Z^d$, $P$ is called a \emph{lattice polytope}.  Two lattice polytopes $P$, $Q\subseteq\R^d$ are \emph{unimodular equivalent}, denoted as $P\cong Q$, if there exist a unimodular matrix $U\in\R^{d\times d}$ and a vector $b\in\Z^d$ such that $U\cdot P+b=Q$. We use $\Delta_d$ to denote the \emph{standard $d$-simplex}, i.e.,  $\Delta_d=\conv \{\{\zerob\}\cup\{\mathbf{e}_i\in\R^d~:~i\in[d]\}\}$, where $\mathbf{e}_1,\ldots,\mathbf{e}_d$ denote the standard unit vectors. A polytope $P$ is \emph{simplicial} if all of its facets are simplices. The \emph{normalized volume} of a $d$-dimensional lattice polytope $P\subseteq \R^d$ is given by $\nvol(P)=d!\cdot\vol(P)$, where $\vol(P)$ denotes the usual Euclidean volume. A lattice $d$-simplex $\Delta$ with normalized volume $1$ is called \emph{unimodular}. In this case, $\Delta\cong \Delta_d$. A polytope $P$ is \emph{reflexive} if $P=\{x\in\R^d~ :~ Ax\le \oneb\}$ for an integral matrix $A$, where $\oneb$ denotes the all ones vector. In this case, $\mathbf{0}$ is the unique interior lattice point of $P$. 

A \emph{triangulation} $\mathcal{T}$ of a lattice $d$-polytope $P$ is a subdivision into lattice simplices of dimension at most $d$. We denote the set of  vertices in $\Tc$ by $\Vc(\Tc)$. A triangulation is \emph{unimodular} if all its simplices are. $\mathcal{T}$ is called \emph{regular} if there exists a height function $\omega_P: \Vc(\Tc)\to\R$ such that $\Tc$ is the projection of the lower envelope of the convex hull of $\{(v,\omega_P(v))~:~v\in \Vc(\Tc)\}\subseteq\R^{d+1}$ to the first $d$ coordinates. We note that every triangulation is in particular a simplicial complex.
 Let $P\subseteq \R^d$ be a lattice $d$-polytope. Ehrhart \cite{Ehrhart} proved that the number of lattice points in the $n$-th dilation of $P$, i.e., $|nP\cap\Z^d|$ is given by a polynomial $E_P(n)$ of degree $d$ in $n$ for all integers $n\geq 0$. The \emph{Ehrhart series} of $P$ is 
 \[
 \sum_{n\ge 0}E_P(n)t^n=\frac{h^{\ast}(P;t)}{(1-t)^{d+1}}=\frac{h^\ast_0(P)+h^\ast_1(P)t+\cdots+h^\ast_s(P)t^s}{(1-t)^{d+1}},
 \]
 where $h^{\ast}(P;t)\in\Z[t]$ is a polynomial of degree at most $d$, called \emph{$h^\ast$-polynomial} of $P$. The vector  $h^\ast(P)=(h^\ast_0(P),\ldots,h^\ast_s(P))$ is called \emph{$h^\ast$-vector} of $P$. We will often omit $P$ from the notation and just write $h^\ast=(h^\ast_0,\ldots,h^\ast_s)$ if $P$ is clear from the context. By \cite[Theorem 2.1]{StanleyLatticePolytopes}, it is well-known that $h^\ast_i(P)$ is non-negative for all $i$. If $P$ admits a unimodular triangulation $\Tc$, then $h^\ast(P)=h(\Tc)$ \cite[Corollary 2.5]{StanleyLatticePolytopes}.
Moreover, if $\Tc$ is  a regular unimodular triangulation of $P$, then \[h^\ast_{\lfloor(d+1)/2\rfloor}(P)\ge \cdots\ge h^\ast_{d-1}(P)\ge h^\ast_d(P)\] 
\cite[Theorem 1.3]{Athanasiadis}. It was shown by Hibi in \cite{HibiReflexive} that a lattice $d$-polytope $P\subseteq\R^d$ is reflexive (up to unimodular equivalence) if and only if $P$ contains a unique interior lattice point, and $h^\ast(P)$ is palindromic, i.e., $h^\ast_i(P)=h^\ast_{d-i}(P)$ for all $0\leq i\leq \lfloor d/2\rfloor$.

\section{Laplacian matrices of boundaries of simplices}\label{sec: LapMat}
In this section we investigate basic properties of the Laplacian matrix of the boundary of a simplex that will be useful for deriving properties of the corresponding Laplacian polytopes in \Cref{sec: LapPol}.

We start with an easy general statement. 
\begin{lem}\label{lem: dim homology group}
Let $\Delta$ be a $d$-dimensional simplicial complex. Then 
\[
\rank\Lc_d(\Delta)=f_d(\Delta)-\dim_{\Q}\Hr_d(\Delta;\Q).
\]
\end{lem}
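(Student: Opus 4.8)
The plan is to reduce $\Lc_d(\Delta)$ to a single term and then translate everything into the language of the boundary map $\partial_d$. Since $\Delta$ is $d$-dimensional, it has no faces of dimension $d+1$, so $C_{d+1}(\Delta)=\{0\}$ and consequently $\partial_{d+1}=0$. By definition $\Lc_d(\Delta)=\partial_{d+1}\partial_{d+1}^{\intercal}+\partial_d^{\intercal}\partial_d$, so the first summand vanishes and we are left with $\Lc_d(\Delta)=\partial_d^{\intercal}\partial_d$, an endomorphism of $C_d(\Delta)$, a space of dimension $f_d(\Delta)$.

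The central step is the identity $\rank(\partial_d^{\intercal}\partial_d)=\rank(\partial_d)$. First I would establish $\ker(\partial_d^{\intercal}\partial_d)=\ker(\partial_d)$ over $\R$: the inclusion $\supseteq$ is clear, and for $\subseteq$ one uses positive semidefiniteness, namely if $\partial_d^{\intercal}\partial_d x=0$ then $0=x^{\intercal}\partial_d^{\intercal}\partial_d x=\|\partial_d x\|^2$, forcing $\partial_d x=0$. By rank--nullity this gives $\rank(\partial_d^{\intercal}\partial_d)=\rank(\partial_d)$ over $\R$. Because $\partial_d$ is an integer (in particular rational) matrix and the rank of a matrix is unchanged under the field extension $\Q\subseteq\R$, the same equality holds over $\Q$. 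This base-change subtlety is really the only point that needs care: the positivity argument is genuinely real, so one has to invoke invariance of rank under field extension in order to obtain the statement over $\Q$.

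Having the rank identity, the rest is bookkeeping. Rank--nullity applied to $\partial_d\colon C_d(\Delta)\to C_{d-1}(\Delta)$ gives $\rank(\partial_d)=f_d(\Delta)-\dim_{\Q}\ker(\partial_d)$. Finally, since $\partial_{d+1}=0$ we have $\Image(\partial_{d+1})=\{0\}$, so the top homology is simply $\Hr_d(\Delta;\Q)=\ker(\partial_d)/\Image(\partial_{d+1})=\ker(\partial_d)$, whence $\dim_{\Q}\ker(\partial_d)=\dim_{\Q}\Hr_d(\Delta;\Q)$. Combining these relations yields
\[
\rank\Lc_d(\Delta)=\rank(\partial_d)=f_d(\Delta)-\dim_{\Q}\Hr_d(\Delta;\Q),
\]
as claimed.
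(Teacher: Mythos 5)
Your proof is correct and follows essentially the same route as the paper: reduce $\Lc_d(\Delta)=\partial_d^{\intercal}\partial_d$, use $\ker(\partial_d^{\intercal}\partial_d)=\ker(\partial_d)$ together with rank--nullity, and identify $\Hr_d(\Delta;\Q)=\ker(\partial_d)$ because $\partial_{d+1}=0$ in top dimension (the paper simply cites the kernel identity, which you prove). One small simplification: the detour through $\R$ and rank invariance under field extension is unnecessary, since for a rational vector $x$ the relation $0=x^{\intercal}\partial_d^{\intercal}\partial_d x$ is a vanishing sum of squares of rational numbers, which already forces $\partial_d x=0$ over $\Q$.
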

\begin{proof}
We have the following chain of equalities:
\[
\rank\Lc_d(\Delta)=\rank (\partial_d^\intercal\partial_d)=f_d(\Delta)-
\dim_{\Q}\ker(\partial_d^\intercal\partial_d)=f_d(\Delta)-
\dim_{\Q}\ker (\partial_d),\]
where the last equality follows from the fact that $\ker(\partial_d)=\ker(\partial_d^\intercal\partial_d)$. Since $\dim\Delta=d$, we also have $\Hr_d(\Delta;\Q)=\ker  (\partial_d)$, which shows the claim.
\end{proof}

In the following, we let $\sigma_{d+1}=2^{[d+2]}$ be the \emph{$(d+1)$-simplex} and we use $\partial(\sigma_{d+1})$ to denote its boundary, i.e., $\partial(\sigma_{d+1})=\sigma_{d+1}\setminus\{[d+2]\}$. Let $F_i=[d+2]\setminus \{d+3-i\}$ for $1\leq i\leq d+2$ and order the columns and rows of $\Lc_d(\partial(\sigma_{d+1}))$ according to $F_1,\ldots,F_{d+2}$.  We first provide an explicit description of the $d$-th Laplacian matrix in this case.

\begin{theorem}\label{thm: Lap Mat Thm Boundary}
Let $\Delta=\partial(\sigma_{d+1})$. 
Then,  $\Lc_d(\Delta)\in\Z^{(d+2)\times (d+2)}$, $\Lc_0(\Delta) =\left(\begin{array}{cc}
      0 &0  \\
      0 & 0
  \end{array}\right)$
and, for $d\geq 1$, $1\leq i,j\leq d+2$, we have
\begin{equation*}
  \Lc_d(\Delta)_{ij} =\begin{cases}
                                                        d+1, & \mbox{if } i=j, \\
                                                        (-1)^{i+j-1}, & \mbox{otherwise.}
                                                      \end{cases}
\end{equation*}
\end{theorem}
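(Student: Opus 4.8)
The plan is to reduce the matrix to $\partial_d^{\intercal}\partial_d$ and then to pin down the signs by careful index bookkeeping. First I would note that $\partial(\sigma_{d+1})$ is $d$-dimensional, its facets being exactly the $d+2$ sets $F_1,\dots,F_{d+2}$ of cardinality $d+1$, and that it has no $(d+1)$-faces because the top simplex $[d+2]$ has been removed. Hence $C_{d+1}(\Delta)=\{0\}$, so $\partial_{d+1}=0$ and $\Lc_d(\Delta)=\partial_d^{\intercal}\partial_d$; in particular $\Lc_d(\Delta)\in\Z^{(d+2)\times(d+2)}$ since $f_d(\Delta)=d+2$. The case $d=0$ is immediate: $\partial(\sigma_1)$ consists of the two isolated vertices $\{1\},\{2\}$, so both $\partial_0$ and $\partial_1$ vanish and $\Lc_0(\Delta)$ is the $2\times 2$ zero matrix.

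For $d\ge 1$ I would apply \Cref{thm: LapMat Thm}(ii) with $i=d$. Since $\Delta$ has no $(d+1)$-faces, $\deg_U(F_i)=0$ for every facet, so each diagonal entry equals $0+d+1=d+1$, as claimed. For distinct facets $F_i,F_j$ one has $F_i\cup F_j=[d+2]\notin F_{d+1}(\Delta)$ and $|F_i\cap F_j|=(d+1)+(d+1)-(d+2)=d$, so $F_i\cap F_j\in F_{d-1}(\Delta)$; thus $\ell_{F_i,F_j}$ equals $+1$ or $-1$ according to whether $F_i\cap F_j$ is the similar or the dissimilar common lower simplex of $F_i$ and $F_j$. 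Equivalently, and this is how I would compute the sign directly, $\ell_{F_i,F_j}$ is the inner product of the $F_i$-th and $F_j$-th columns of $\partial_d$, in which only the term corresponding to $F_i\cap F_j$ survives (it is the unique $(d-1)$-face lying in both facets); hence $\ell_{F_i,F_j}$ is the product of the coefficient of $e_{F_i\cap F_j}$ in $\partial_d(e_{F_i})$ and in $\partial_d(e_{F_j})$.

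The crux is therefore the sign determination, which I would carry out by locating the relevant elements in the sorted facets. Write $m_k=d+3-k$ for the unique element missing from $F_k$, and note that $\Lc_d(\Delta)$ is symmetric, so I may assume $i<j$, whence $m_i>m_j$. In $\partial_d(e_{F_i})$ the face $F_i\cap F_j=F_i\setminus\{m_j\}$ appears with sign $(-1)^{p-1}$, where $p$ is the position of $m_j$ in the increasing enumeration of $F_i=[d+2]\setminus\{m_i\}$; since $m_j<m_i$, deleting $m_i$ does not affect the elements below $m_j$, so $p=m_j$. In $\partial_d(e_{F_j})$ the same face $F_j\setminus\{m_i\}$ appears with sign $(-1)^{q-1}$, where $q$ is the position of $m_i$ in $F_j=[d+2]\setminus\{m_j\}$; here $m_j<m_i$ removes exactly one element below $m_i$, so $q=m_i-1$. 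Multiplying gives $\ell_{F_i,F_j}=(-1)^{m_j-1}(-1)^{m_i-2}=(-1)^{m_i+m_j-3}=(-1)^{m_i+m_j-1}$, and substituting $m_i+m_j=2d+6-(i+j)$ yields $(-1)^{m_i+m_j-1}=(-1)^{i+j-1}$, as desired. The only real obstacle is this parity bookkeeping; once the positions $p$ and $q$ are pinned down correctly, the formula drops out.
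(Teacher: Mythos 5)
Your proposal is correct and follows essentially the same route as the paper's proof: the diagonal entries come from $\deg_U(F)=0$ together with \Cref{thm: LapMat Thm}(ii), and the off-diagonal entries from locating the missing elements $m_i=d+3-i$ and $m_j=d+3-j$ in the sorted facets and comparing the two boundary signs, exactly as in the paper (your inner-product reformulation via $\Lc_d(\Delta)=\partial_d^{\intercal}\partial_d$ is just the similar/dissimilar criterion of \Cref{thm: LapMat Thm} restated). A minor remark: your individual signs $(-1)^{m_j-1}$ and $(-1)^{m_i-2}$ are in fact the correct ones, while the paper's proof states $(-1)^{d+3-j}$ and $(-1)^{d+2-i}$, each off by a common factor of $-1$ that cancels, so both computations give the same product $(-1)^{i+j-1}$ and the same criterion that the signs coincide if and only if $i+j$ is odd.
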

\begin{proof}
Since $f_{d}(\partial(\sigma_{d+1}))=d+2$, we have $\Lc_d(\Delta)\in\Z^{(d+2)\times (d+2)}$.

Assume $d=0$. As $\partial_0$ is the zero map, the statement is immediate.

Now let $d\geq 1.$ Since $\dim\Delta=d$, it follows that $\deg_U(F)=0$ for any $d$-face $F$ of $\Delta$. Using \Cref{thm: LapMat Thm} this implies that  $\Lc_d(\Delta)_{ii}=d+1$ for all $1\leq i\leq d+2$.

Now, let $i\neq j$. Since $\Lc_d(\Delta)$ is symmetric, we can assume that $i<j$. $F_i$ and $F_j$ have the common lower simplex $F_i\cap F_j=[d+2]\setminus \{d+3-i,d+3-j\}\neq \emptyset$. By \Cref{eqn: Randabbildung}, $e_{F_i\cap F_j}$ appears with sign $(-1)^{d+3-j}$ in $\partial_d(e_{[d+2]\setminus\{d+3-i\}})$ and it appears with sign $(-1)^{d+2-i}$ in $\partial_d(e_{[d+2]\setminus\{d+3-j\}})$. These signs coincide, meaning that $F_i\cap F_j$ is a similar common lower simplex of $F_i$ and $F_j$, if and only if $i+j$ is odd. The claim follows from \Cref{thm: LapMat Thm}.
\end{proof}

The next lemma will be crucial for determining the dimension of the Laplacian polytope of $\partial(\sigma_{d+1})$ in \Cref{thm: dimension}.

\begin{lem}\label{lem: linear combination of columns}
Let $\Delta=\partial(\sigma_{d+1})$. Then, $\Lc_d(\Delta)$ has rank $d+1$ and every $(d+1)$-element subset of the columns (resp. rows) of $\Lc_d(\Delta)$ is linearly independent. 
\end{lem}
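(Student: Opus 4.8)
The plan is to exploit the very rigid structure of $\Lc_d(\Delta)$ exhibited in \Cref{thm: Lap Mat Thm Boundary}. Writing $s = \left((-1)^1, (-1)^2, \ldots, (-1)^{d+2}\right)^{\intercal} \in \Z^{d+2}$, the first step I would take is to recognize the identity
\[
\Lc_d(\Delta) = (d+2)\,I_{d+2} - s\,s^{\intercal}.
\]
This matches the entries of \Cref{thm: Lap Mat Thm Boundary}: off the diagonal one has $-s_i s_j = -(-1)^{i+j} = (-1)^{i+j-1}$, while on the diagonal $-(s s^{\intercal})_{ii} = -s_i^2 = -1$, so that adding $(d+2)I$ restores the value $d+1$. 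This rewrites the Laplacian as a rank-one perturbation of a scalar multiple of the identity, which is the one simplification that makes everything transparent.

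From this form the rank claim is immediate. Since $s^{\intercal}s = \sum_{i=1}^{d+2} s_i^2 = d+2$, I compute $\Lc_d(\Delta)\,s = (d+2)s - s\,(s^{\intercal}s) = 0$, so $s \in \ker \Lc_d(\Delta)$. Conversely, if $\Lc_d(\Delta)\,v = 0$ then $(d+2)v = (s^{\intercal}v)\,s$, which forces $v$ to be a scalar multiple of $s$. Hence $\ker \Lc_d(\Delta) = \mathrm{span}(s)$ is one-dimensional and $\rank \Lc_d(\Delta) = (d+2)-1 = d+1$. (As a sanity check, the rank also follows directly from \Cref{lem: dim homology group}, since $\partial(\sigma_{d+1})$ triangulates the $d$-sphere and therefore $\dim_{\Q}\Hr_d(\partial(\sigma_{d+1});\Q)=1$; the vector $s$ is, up to sign, the fundamental $d$-cycle.)

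For the statement about $(d+1)$-element subsets, the decisive observation is that every linear dependence among the columns of $\Lc_d(\Delta)$ is a scalar multiple of $s$, and $s$ has \emph{all} entries nonzero. Concretely, let $S \subsetneq [d+2]$ with $|S| = d+1$, let $c_j$ denote the $j$-th column, and suppose $\sum_{j \in S} \lambda_j c_j = 0$. Extending by $\lambda_k = 0$ for the unique $k \notin S$ yields a vector $\lambda$ with $\Lc_d(\Delta)\,\lambda = 0$, so $\lambda = \mu s$ for some $\mu \in \R$. But then $0 = \lambda_k = \mu\,(-1)^k$ forces $\mu = 0$, whence $\lambda = 0$ and the chosen columns are linearly independent. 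The identical argument applies to any $(d+1)$ rows, since $\Lc_d(\Delta)$ is symmetric.

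I do not expect a genuine obstacle once the identity $\Lc_d(\Delta) = (d+2)I - s s^{\intercal}$ is established: the dimension count and the full-support property of $s$ carry the entire argument. The only point requiring care is tracking the sign convention so that the kernel generator $s$ has no vanishing coordinate, as it is precisely this full support that prevents any proper subset of columns (or rows) from carrying a nontrivial dependence.
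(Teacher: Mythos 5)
Your proof is correct, and it takes a genuinely different route from the paper's. You base everything on the closed-form identity $\Lc_d(\Delta)=(d+2)\,E_{d+2}-s\,s^{\intercal}$ with $s=\left((-1)^1,\ldots,(-1)^{d+2}\right)^{\intercal}$ (valid for $d\geq 1$, the range in which \Cref{thm: Lap Mat Thm Boundary} gives the stated entries), read off $\ker\Lc_d(\Delta)=\mathrm{span}(s)$ directly from the rank-one-perturbation form, and deduce both claims from the single fact that the kernel generator has full support. The paper argues topologically instead: the rank statement follows from \Cref{lem: dim homology group} together with $\Hr_d(\partial(\sigma_{d+1});\Q)\cong\Q$, and the independence of any $d+1$ columns is obtained by deleting the $i$-th row and column, recognizing the resulting $(d+1)\times(d+1)$ matrix as $\Lc_d(\Delta\setminus\{F_i\})$ for the $d$-ball $\Delta\setminus\{F_i\}$, whose vanishing top homology forces it to have full rank. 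Your argument is more elementary and self-contained --- no homology is needed, and you get the kernel generator explicitly, which (as you correctly note) is up to sign the fundamental $d$-cycle, so your computation in effect re-proves $\dim_\Q\Hr_d=1$ by hand --- whereas the paper's argument is shorter given its earlier lemma and scales to situations where no closed form for the Laplacian is available, since it only needs the facet-deleted subcomplexes to have vanishing top homology. One shared caveat: both arguments tacitly assume $d\geq 1$ (for $d=0$ the matrix $\Lc_0(\partial(\sigma_1))$ is the zero matrix and the rank claim fails as literally stated), so your reliance on the $d\geq 1$ entry formula matches the paper's implicit scope rather than introducing a new gap.
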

\begin{proof}
The first statement follows from \Cref{lem: dim homology group} and the fact that $H_d(\Delta;\Q)=\Q$. Let $1\leq i\leq d+2$. Let $A_i$ be the  $(d+1)\times (d+1)$-matrix obtained from $\Lc_d(\Delta)$ by removing the $i$-th row and column. By definition, $A_i=\Lc_d(\Delta\setminus \{F_i\})$. Since $H_d(\Delta\setminus \{F_i\}),\Q)=0$, this matrix has full rank. As adding any extra row or column to $A_i$ does  not change the rank, the claim follows.

\end{proof}

\begin{lem}\label{lem: Lap Mat with 1 rank}
Let $\Delta=\partial(\sigma_{d+1})$.
Then
 \begin{equation*}
    \rank\begin{pmatrix}
            \Lc_d(\Delta) \\
            1\cdots 1
          \end{pmatrix}= 
          \begin{cases}d+1, &\mbox{if } d \mbox{ is even,}\\
          d+2, &\mbox{if } d \mbox{ is odd.}
          \end{cases}
  \end{equation*}
\end{lem}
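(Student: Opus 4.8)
The plan is to reduce this rank computation to a single scalar orthogonality test. Appending the all-ones row $\oneb^\intercal$ to $\Lc_d(\Delta)$ raises the rank from $\rank \Lc_d(\Delta) = d+1$ (Lemma \ref{lem: linear combination of columns}) to $d+2$ precisely when $\oneb^\intercal$ does not lie in the row space of $\Lc_d(\Delta)$. Since $\Lc_d(\Delta)$ is symmetric by \Cref{thm: Lap Mat Thm Boundary}, its row space equals its column space, which is the orthogonal complement of its kernel. As $\rank \Lc_d(\Delta) = d+1$, the kernel is one-dimensional, so it suffices to exhibit a spanning vector $v$ of $\ker \Lc_d(\Delta)$ and then decide whether $\oneb^\intercal v = 0$.

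First I would guess and verify the kernel vector $v = \big((-1)^1,(-1)^2,\ldots,(-1)^{d+2}\big)^\intercal$. Using the explicit entries from \Cref{thm: Lap Mat Thm Boundary}, the $i$-th coordinate of $\Lc_d(\Delta)\, v$ is
\[
(\Lc_d(\Delta)\, v)_i = (d+1)(-1)^i + \sum_{j\neq i}(-1)^{i+j-1}(-1)^j = (d+1)(-1)^i + (d+1)(-1)^{i-1} = 0,
\]
because each off-diagonal summand equals $(-1)^{i-1}$ and there are $d+1$ of them. Since $\dim \ker \Lc_d(\Delta) = (d+2)-(d+1) = 1$, this $v$ spans the kernel.

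Then I would run the orthogonality test. The row $\oneb^\intercal$ lies in the row space $(\ker \Lc_d(\Delta))^\perp$ of $\Lc_d(\Delta)$ if and only if $\oneb^\intercal v = \sum_{k=1}^{d+2}(-1)^k = 0$. This sum vanishes exactly when $d+2$ is even, i.e. when $d$ is even, and equals $-1$ when $d+2$ is odd, i.e. when $d$ is odd. Hence the rank remains $d+1$ for $d$ even and increases to $d+2$ for $d$ odd, which is the claim.

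There is no real obstacle once $v$ is identified; the only point requiring care is the standard fact that for a symmetric matrix the row (equivalently column) space is the orthogonal complement of the kernel, which is what converts the statement ``the rank increases'' into the clean condition $\oneb^\intercal v \neq 0$. A determinant-based alternative — using Lemma \ref{lem: linear combination of columns} to locate an invertible $(d+1)\times(d+1)$ submatrix and then checking whether replacing one column by $\oneb$ yields a nonsingular $(d+2)\times(d+2)$ matrix — would also work, but the kernel computation above is shorter and makes the parity dichotomy transparent.
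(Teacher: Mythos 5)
Your proof is correct, and it takes a genuinely different route from the paper's. The paper works directly with the solvability of $\Lc_d(\Delta)\lambda=\oneb$: for $d$ even it exhibits an explicit solution (zero in the odd coordinates, $\tfrac{2}{d+2}$ in the even ones), placing $\oneb$ in the column space, and for $d$ odd it argues by contradiction, using \Cref{lem: linear combination of columns} to pin down the putative solution $\lambda$ up to one free parameter and then checking that the last row of $\Lc_d(\Delta)$ rules it out. You instead dualize: since $\Lc_d(\Delta)$ is symmetric, its row space is $(\ker \Lc_d(\Delta))^{\perp}$, and since the kernel is one-dimensional the whole question collapses to the single inner product $\oneb^{\intercal}v$ for the alternating vector $v=\big((-1)^k\big)_{k=1}^{d+2}$, whose membership in the kernel you verify correctly from the entry description in \Cref{thm: Lap Mat Thm Boundary} (each off-diagonal term $(-1)^{i+j-1}(-1)^j$ indeed equals $(-1)^{i-1}$). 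Your route is shorter, treats both parities uniformly (the dichotomy is exactly $\sum_{k=1}^{d+2}(-1)^k=0$ versus $\neq 0$), and is conceptually transparent: since $\partial(\sigma_{d+1})$ has no $(d+1)$-faces, $\ker\Lc_d(\Delta)=\ker\partial_d=H_d(\partial(\sigma_{d+1});\Q)$, and your $v$ is, up to sign, the fundamental cycle $\partial_{d+1}(e_{[d+2]})$ of the sphere, so your test reads: $\oneb$ is orthogonal to the fundamental cycle precisely when $d$ is even. The paper's computation buys, as a byproduct, the explicit preimage $\lambda$ of $\oneb$, but nothing later in the paper depends on it, so you lose nothing. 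One small caveat, which your proof shares with the paper's: both rely on \Cref{lem: linear combination of columns} and on the entry formula of \Cref{thm: Lap Mat Thm Boundary}, which require $d\geq 1$; for $d=0$ the matrix $\Lc_0(\Delta)$ is the zero $2\times 2$ matrix, and the claimed rank $d+1=1$ is immediate from the appended ones row, so that trivial case should be noted separately.
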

\begin{proof}
First assume that $d$ is even. We define $\lambda=(\lambda_1,...,\lambda_{d+2})^\intercal\in\R^{d+2}$ by 
\begin{equation*}\label{eqn: lambda star}
    \lambda_j=\begin{cases}
                  0, & \mbox{if } j \text{\ is odd,} \\
                  \frac{2}{d+2}, & \mbox{if } j \text{\ is even}.
                \end{cases}
  \end{equation*}
  Using \Cref{thm: Lap Mat Thm Boundary} it is straight-forward to verify that $  \Lc_d(\Delta)\cdot \lambda=\oneb$ which, combined with \Cref{lem: linear combination of columns} shows the claim.
  
Now let $d$ be odd and assume by contradiction that $
  \rank\begin{pmatrix}
          \Lc_d(\Delta) \\
          1\cdots 1
        \end{pmatrix}<d+2$. 
\Cref{lem: dim homology group} and \Cref{lem: linear combination of columns} imply that $
  \rank\begin{pmatrix} 
          \Lc_d(\Delta) \\
          1\cdots 1
        \end{pmatrix}=\rank\Lc_d(\Delta)$.
Hence, there exists $\lambda=(\lambda_1,\ldots,\lambda_{d+2})^\intercal\in\R^{d+2}$, such that $\Lc_d(\Delta)\cdot \lambda=\oneb$. Let $\Lc_d(\Delta)_{[d+1]}$ be the matrix obtained from $\Lc_d(\Delta)$ by deleting the last row. Then, we also have $\Lc_d(\Delta)_{[d+1]}\cdot\lambda=\oneb$ and it follows from \Cref{lem: linear combination of columns} that, up to the choice of the last coordinate $\lambda_{d+2}$, the vector  $\lambda$ is unique. Indeed, a direct computation shows that, if $\lambda_{d+2}=\mu$ for some $\mu\in \R$, then we must have
          \begin{equation}\label{eqn: lambda d odd}
            \lambda_j=\begin{cases}
                        \frac{(d+2)\cdot \mu+1}{d+2}, & \mbox{if } j \text{\ is odd,} \vspace{0.5cm} \\
                        -\frac{(d+2)\cdot \mu-1}{d+2}, & \mbox{if } j \text{\ is even}.
                      \end{cases}
          \end{equation}
          However, denoting by $r_{d+2}$ the last row of $\Lc_{d}(\Delta)$, it holds that $r_{d+2}\cdot \lambda = 0 \neq 1$, which yields a contradiction.
\end{proof}

\section{General properties of Laplacian polytopes} \label{sec: LapPol}

The goal of this section is to generalize Laplacian simplices ~--~ as introduced and studied in  \cite{LaplSimplBraun2017, meyer2018laplacian}~--~ that are associated to a graph to arbitrary simplicial complexes and their Laplacian matrices. After stating some basic general properties of what we call Laplacian polytopes, we focus on boundaries of simplices and their highest Laplacians.
 
In the following, given a matrix $M$, we use $\conv(M)$ to denote the polytope given by the convex hull of the columns of $M$.

\begin{defi}\label{def: Laplacian Polytope}
Let $\Delta$ be a $d$-dimensional simplicial complex on $[n]$, ordered $1<\cdots <n$, and let $0\leq k\leq d$. The \emph{$k$-th Laplacian polytope} of $\Delta$ is defined as the  convex hull of the columns of $\Lc_k(\Delta)$, i.e.,
\[P^{(k)}_{\Delta}\coloneq\conv(\Lc_k(\Delta))\subseteq \R^{f_k(\Delta)}.\]
\end{defi}

We want to remark that the $0$-th Laplacian polytope of a simplicial complex coincides with the Laplacian simplex of its $1$-skeleton, as defined in \cite{LaplSimplBraun2017}. The next example shows that different orderings of the vertex set of $\Delta$ may result in polytopes of different dimensions.

\begin{exa}\label{ex: ordering}
        Let $G$ be the 4-cycle on $[4]$ with $E(G)=\{12,23,34,14\}$. If the vertices of $G$ are ordered $1<2<3<4$, then $P^{(1)}_G$ is a 3-simplex. If the vertices of $G$ are ordered $1<2<4<3$, then $P_G^{(1)}$ is a 2-dimensional rectangle.
\end{exa}
\begin{exa}\label{exa: Laplace Polytope}
 $P^{(2)}_{\partial(\sigma_3)}$ is given by the convex hull of the columns of the following matrix:
 \begin{eqnarray*}
  \Lc_2(\partial (\sigma_3))&=&\left(\begin{array}{cccc}
                                   3 & 1 & -1 & 1 \\
                                   1 & 3 & 1 & -1 \\
                                   -1 & 1 & 3 & 1 \\
                                   1 & -1 & 1 & 3
                                 \end{array}\right).
\end{eqnarray*}    
 It will follow from \Cref{lem: unimodular equivalence} that $P^{(2)}_{\partial(\sigma_3)}$ is unimodular equivalent to the square in $\R^2$ with vertices $(1,-1),(-1,1),(3,1)$ and $(1,3)$.
\end{exa}
We start by showing that every column of $\Lc_k(\Delta)$ yields a vertex of $P^{(k)}_{\Delta}$. 
\begin{prop}\label{prop: vertices P(k)}
  Let $\Delta$ be a $d$-dimensional simplicial complex and $0\leq k\leq d$ an integer. Then, $P^{(k)}_{\Delta}$ has $f_k(\Delta)$ many vertices.
\end{prop}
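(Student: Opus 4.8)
The plan is to separate each column of $\Lc_k(\Delta)$ from the remaining ones by a linear functional, thereby exhibiting every column as a vertex. For $j\in[f_k(\Delta)]$ write $v_j$ for the $j$-th column, so that $P^{(k)}_{\Delta}=\conv\{v_1,\dots,v_{f_k(\Delta)}\}$. It suffices to produce, for each $j$, a vector $a_j$ with $a_j^\intercal v_j>a_j^\intercal v_i$ for all $i\neq j$: then $v_j$ is the unique maximizer of the functional $x\mapsto a_j^\intercal x$ over the finite set of columns, hence over their convex hull, so $v_j$ is an exposed point and in particular a vertex. The same strict inequalities force $v_i\neq v_j$ for $i\neq j$, so the columns are pairwise distinct, and one concludes that $P^{(k)}_{\Delta}$ has exactly $f_k(\Delta)$ vertices.

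The natural candidate is the coordinate functional $a_j=e_j$, for which $e_j^\intercal v_i=\Lc_k(\Delta)_{ij}$ is simply the $(i,j)$-entry of the (symmetric) Laplacian matrix. Thus the entire statement reduces to verifying the entrywise strict diagonal dominance
\[
\Lc_k(\Delta)_{jj}>\Lc_k(\Delta)_{ij}\qquad\text{for all }i\neq j,
\]
which I would read off from \Cref{thm: LapMat Thm}. When $k\ge 1$ this is immediate and robust: the diagonal entry equals $\deg_U(F_j)+k+1\ge 2$, whereas every off-diagonal entry lies in $\{-1,0,1\}$, so the inequality holds with room to spare. In particular the argument applies verbatim to all the cases of interest for $\partial(\sigma_{d+1})$, where $k=d\ge 1$.

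The step I expect to be the genuine obstacle is the degenerate case $k=0$. Here \Cref{thm: LapMat Thm}(i) gives diagonal entry $\deg_U(F_j)$ and off-diagonal entries in $\{-1,0\}$, so the coordinate functional separates $v_j$ precisely when $\deg_U(F_j)\ge 1$, i.e.\ when $F_j$ is not an isolated vertex; if $\deg_U(F_j)=0$ the $j$-th column is the zero vector and strictness fails (indeed two isolated vertices would produce coinciding zero columns). I would therefore either invoke the standing assumption that the relevant complexes carry no isolated vertices (so that $\deg_U(F_j)\ge 1$ for every vertex and the same $e_j$-argument goes through), or else record this hypothesis explicitly; for $k\ge 1$ the $+(k+1)$ term removes the issue entirely, which is why the statement is clean in the range used throughout the paper.
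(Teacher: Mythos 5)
Your proof is correct for every $k\ge 1$, and it takes a genuinely different route from the paper's. The paper argues by contradiction: if a column $v^{(i)}$ of $\Lc_k(\Delta)$ were a convex combination $\sum_{j\in S}\lambda_j v^{(j)}$ of the others, the coefficient vector (with $\lambda_i=-1$) would lie in $\ker(\Lc_k(\Delta))$ and hence in $\ker(\partial_k)$ by a result of \cite{Mulas}; a sign analysis of the columns $w^{(j)}$ of $\partial_k$ (entries in $\{-1,0,1\}$, all columns with the same number of nonzero entries) then forces $w^{(j)}=w^{(i)}$ for all $j\in S$, contradicting that distinct $k$-faces have distinct boundary columns. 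You instead exhibit each column as an exposed point of the hull via the coordinate functional $e_j$, reducing the whole statement to the strict diagonal dominance $\Lc_k(\Delta)_{jj}>\Lc_k(\Delta)_{ij}$ for $i\neq j$, which for $k\ge 1$ is immediate from \Cref{thm: LapMat Thm} since the diagonal entry is $\deg_U(F)+k+1\ge 2$ while off-diagonal entries lie in $\{-1,0,1\}$. Your argument is more elementary: it needs neither the kernel identity from \cite{Mulas} nor any analysis of $\partial_k$, at the price of invoking the explicit entry description of the Laplacian; the paper's argument never uses the precise diagonal values, only the combinatorial shape of the boundary matrix, and as a byproduct shows that a convex dependence among columns of $\Lc_k$ would force equal columns in $\partial_k$.

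Your caveat about $k=0$ is well taken, and here you are in fact more careful than the paper. In the paper's own convention $C_{-1}(\Delta)=\{0\}$, so $\partial_0$ is the zero map, all columns $w^{(\ell)}$ vanish, and the concluding step ``$w^{(i)}=w^{(\ell)}$, a contradiction'' is no contradiction at all: the paper's proof silently fails at $k=0$. Indeed the statement itself is false there in the presence of isolated vertices, exactly as you observe: for $\Delta=\partial(\sigma_1)$ (two isolated points) one has $\Lc_0(\Delta)=0$ by \Cref{thm: Lap Mat Thm Boundary}, so $P^{(0)}_{\Delta}$ is a single point although $f_0(\Delta)=2$; likewise the complex with facets $\{1,2\}$ and $\{3\}$ gives a segment with two vertices although $f_0=3$. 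There is no standing no-isolated-vertex assumption in the paper to invoke, so your second option -- adding the hypothesis $\deg_U(v)\ge 1$ for all vertices when $k=0$ -- is the correct repair; with it your $e_j$-argument closes that case as well, and nothing in the paper's applications is affected, since there one always has $k=d\ge 1$.
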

\begin{proof}
Set $m\coloneqq f_k(\Delta)$ and let $v^{(i)}$ denote the $i$-th column of $\Lc_k(\Delta)$. We assume by contradiction that there exists $1\leq i\leq m$, a set $S\subseteq [ m]\setminus\{i\}$ and $\lambda_j\in \R$ with $\lambda_j>0$ and $\sum_{j\in S}\lambda_j=1$ such that $v^{(i)}=\sum_{j\in S}\lambda_j v^{(j)}$. Setting $\lambda_j=0$ if $j\notin S\cup\{i\}$ and $\lambda_i=-1$, we see that $\lambda\coloneqq (\lambda_1,\ldots,\lambda_m)^\intercal\in \ker(\Lc_k(\Delta))$ and hence $\lambda\in \ker(\partial_k)$ by \cite[Corollary 1.3.1]{Mulas}. Let $w^{(\ell)}$ denote the $\ell$-th column of $\partial_k$. 
If $w^{(i)}_\ell=1$, then, since $w^{(j)}_\ell\in\{-1,0,1\}$, $\lambda_j>0$ and $\sum_{j\in S}\lambda_j=1$, we must have $w^{(j)}_\ell=1$ for all $j\in S$. By the same reasoning, it follows that $w^{(j)}_\ell=-1$ for all $j\in S$ if $w^{(i)}_\ell=-1$. As all columns of $\partial_k$ have the same number of non-zero entries, we conclude $w^{(i)}=w^{(\ell)}$ for all $\ell \in S$, which is a contradiction.
\end{proof}

The next proposition gives a sufficient criterion for $P^{(\dim\Delta)}_{\Delta}$ being a simplex. 
\begin{prop}
    \label{prop: L full rank P fd-1-simplex}
Let $\Delta$ be a $d$-dimensional simplicial complex. If $\Hr_d(\Delta;\Q)=0$, then $P^{(\dim\Delta)}_{\Delta}$ is an $(f_d(\Delta)-1)$-simplex.
\end{prop}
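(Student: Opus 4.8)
Set $m\coloneqq f_d(\Delta)$, so that $P^{(\dim\Delta)}_{\Delta}=P^{(d)}_{\Delta}=\conv(\Lc_d(\Delta))$ is the convex hull of the $m$ columns of the square matrix $\Lc_d(\Delta)\in\Z^{m\times m}$, regarded as points in $\R^m$. The convex hull of $m$ points is an $(m-1)$-simplex precisely when these points are affinely independent; in that case all of them are automatically vertices and the hull has dimension $m-1$. Hence the whole statement reduces to showing that the $m$ columns of $\Lc_d(\Delta)$ are affinely independent. (In particular, with this route I would not even need to invoke \Cref{prop: vertices P(k)} separately, since affine independence of the $m$ points already forces each to be a vertex.)

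\textbf{Key step.} The vanishing hypothesis feeds directly into \Cref{lem: dim homology group}: since $\Hr_d(\Delta;\Q)=0$, that lemma gives $\rank\Lc_d(\Delta)=f_d(\Delta)-\dim_{\Q}\Hr_d(\Delta;\Q)=m$. Thus the $m\times m$ matrix $\Lc_d(\Delta)$ has full rank, i.e.\ it is invertible, and so its $m$ columns are \emph{linearly} independent. This is really the only input; everything else is bookkeeping.

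\textbf{From linear to affine independence and conclusion.} I would then observe that linearly independent vectors are in particular affinely independent: any affine relation $\sum_{i}\lambda_i v^{(i)}=\zerob$ with $\sum_i\lambda_i=0$ is in particular a linear relation, which by linear independence forces all $\lambda_i=0$. Equivalently, one can note that the augmented matrix $\left(\begin{smallmatrix}\Lc_d(\Delta)\\ \oneb^\intercal\end{smallmatrix}\right)$ has rank $m$, because it already contains the invertible block $\Lc_d(\Delta)$. Either way, the $m=f_d(\Delta)$ columns are affinely independent, whence $\dim P^{(d)}_{\Delta}=f_d(\Delta)-1$ and $P^{(d)}_{\Delta}$ is an $(f_d(\Delta)-1)$-simplex, as claimed.

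\textbf{Main obstacle.} There is essentially no serious obstacle here; the argument is a one-line consequence of \Cref{lem: dim homology group} once one recognizes that ``simplex'' is equivalent to affine independence of the columns. The only point requiring a moment's care is the (elementary) passage from linear independence of the columns to their affine independence, which is immediate but worth stating explicitly.
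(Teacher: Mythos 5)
Your proposal is correct and takes essentially the same route as the paper: both apply \Cref{lem: dim homology group} with the hypothesis $\Hr_d(\Delta;\Q)=0$ to conclude that $\Lc_d(\Delta)$ has full rank $f_d(\Delta)$, and then deduce the simplex claim from the linear independence of its columns. The only difference is cosmetic — you make explicit the elementary passage from linear to affine independence of the $f_d(\Delta)$ column points, which the paper's proof leaves implicit.
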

\begin{proof}
\Cref{lem: dim homology group} implies that  $\rank\Lc_d(\Delta)=f_d(\Delta)$. Consequently, the columns of $\Lc_d(\Delta)$ are linearly independent which shows the claim.
\end{proof}

 In the following, we focus on the $d$-th Laplacian polytope of  $\partial(\sigma_{d+1})$. To simplify notation, we set $P_{\partial(\sigma_{d+1})}=P_{\partial(\sigma_{d+1})}^{(d)}$. We use $s^{(i)}$ to denote the $i$-th column of $\Lc_d(\partial(\sigma_{d+1}))$. Moreover, given a subset $S\subseteq [d+2]$, we denote by $\Lc_d(S)$ the matrix obtained from $\Lc_d(\partial(\sigma_{d+1}))$ by deleting the rows with indices in $S$.

 Combining \Cref{lem: Lap Mat with 1 rank} and  \cite[p. 4]{Grunbaum} the following formula for the dimension of $P_{\partial(\sigma_{d+1})}$ is immediate.
\begin{lem}\label{thm: dimension}
Let $\Delta=\partial(\sigma_{d+1})$. Then,
 \begin{equation*}
   \dim P_{\Delta}=\begin{cases}
                     d,   & \mbox{if } d \text{\ is even,} \\
                     d+1, & \mbox{if } d \text{\ is odd}.
                   \end{cases}
 \end{equation*}
\end{lem}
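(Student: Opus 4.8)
The plan is to reduce the dimension of $P_\Delta$ to the rank computation that is already packaged in \Cref{lem: Lap Mat with 1 rank}. Since $P_\Delta=\conv(\Lc_d(\Delta))$ is the convex hull of the $d+2$ columns $s^{(1)},\ldots,s^{(d+2)}$ of $\Lc_d(\Delta)$, regarded as points in $\R^{d+2}$, its dimension is by definition the dimension of their affine hull. The tool I would invoke is the classical fact (recorded in \cite[p.~4]{Grunbaum}) that the affine dimension of a finite point set $\{v^{(1)},\ldots,v^{(m)}\}\subseteq\R^n$ is one less than the rank of the matrix obtained by taking the points as columns and adjoining an all-ones row. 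Applied to our columns, this reads
\[
\dim P_\Delta=\rank\begin{pmatrix}\Lc_d(\Delta)\\ 1\cdots 1\end{pmatrix}-1 .
\]

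With this identity recorded, the statement is an immediate consequence of \Cref{lem: Lap Mat with 1 rank}, which computes the rank of precisely this augmented matrix: it equals $d+1$ when $d$ is even and $d+2$ when $d$ is odd. Subtracting one in each case yields $\dim P_\Delta=d$ for even $d$ and $\dim P_\Delta=d+1$ for odd $d$, exactly as claimed. So the final step is genuinely a one-line corollary; all the substance sits in the earlier lemma.

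The only thing I would take care to double-check is the bookkeeping behind the even/odd dichotomy, as a sanity test on the direction of the inequality. When $d$ is even, \Cref{lem: Lap Mat with 1 rank} exhibits a $\lambda$ solving $\Lc_d(\Delta)\lambda=\oneb$, so $\oneb$ lies in the column space of $\Lc_d(\Delta)$; by symmetry of $\Lc_d(\Delta)$ the all-ones row therefore lies in its row space, and adjoining that row does not increase the rank beyond the value $d+1$ supplied by \Cref{lem: linear combination of columns}. When $d$ is odd, no such solution exists, so the all-ones row is genuinely new and the augmented rank jumps to $d+2$. Since there is no real obstacle here beyond correctly quoting the affine-dimension formula and the rank values, I would expect this lemma to be the shortest in the section, with all the difficulty already discharged in \Cref{lem: Lap Mat with 1 rank}.
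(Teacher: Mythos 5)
Your proposal is correct and matches the paper's proof exactly: the paper likewise obtains the dimension by combining the augmented-rank formula for the affine hull from \cite[p.~4]{Grunbaum} with the rank computation in \Cref{lem: Lap Mat with 1 rank}, treating the lemma as an immediate corollary. Your extra sanity check on the even/odd dichotomy is consistent with (and essentially reproduces) the reasoning inside the proof of \Cref{lem: Lap Mat with 1 rank} itself.
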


The previous statement together with \Cref{prop: vertices P(k)} allows us to conclude:
\begin{cor}\label{thm: P simplex d odd and number of vertices}
Let $d\in \N$ with $d\geq 1$ and $\Delta=\partial(\sigma_{d+1})$. Then, $P_{\Delta}$ has $d+2$ vertices. In particular, $P_{\Delta}$ is a $(d+1)$-simplex, if $d$ is odd.
\end{cor}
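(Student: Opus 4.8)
The plan is to synthesize the two immediately preceding results, namely the vertex count from \Cref{prop: vertices P(k)} and the dimension formula from \Cref{thm: dimension}. First I would record the combinatorics of $\partial(\sigma_{d+1})$: its $d$-faces are exactly the $(d+1)$-element subsets of $[d+2]$, i.e.\ the facets $F_1,\ldots,F_{d+2}$, so that $f_d(\partial(\sigma_{d+1}))=\binom{d+2}{d+1}=d+2$. Applying \Cref{prop: vertices P(k)} with $k=d$ to the $d$-dimensional complex $\Delta=\partial(\sigma_{d+1})$ then yields at once that each of the $d+2$ columns $s^{(i)}$ of $\Lc_d(\partial(\sigma_{d+1}))$ is a vertex and that $P_\Delta$ has exactly $d+2$ vertices. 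This settles the first assertion for every $d\geq 1$.

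For the \emph{in particular} clause, suppose $d$ is odd. By \Cref{thm: dimension} we have $\dim P_\Delta=d+1$. Here I would invoke the elementary fact (the same Grünbaum reference already used to compute the dimension) that an $n$-dimensional polytope has at least $n+1$ vertices, with equality exactly when its vertices are affinely independent, in which case the polytope is an $n$-simplex. With $n=d+1$, the polytope $P_\Delta$ has precisely $d+2=n+1$ vertices, so the $d+2$ columns of $\Lc_d(\partial(\sigma_{d+1}))$ are forced to be affinely independent and $P_\Delta$ is a $(d+1)$-simplex. (Alternatively, affine independence can be read off directly from \Cref{lem: Lap Mat with 1 rank}, since for $d$ odd the matrix $\binom{\Lc_d(\Delta)}{1\cdots 1}$ has rank $d+2$, which is precisely the condition that the $d+2$ column points are affinely independent.)

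I expect no genuine obstacle: the corollary is a direct combination of the cited statements, and the only extra ingredient is the standard characterization of simplices as the polytopes attaining the minimal vertex count in their dimension. The single point requiring mild care is that \Cref{prop: vertices P(k)} is being applied in the top degree $k=d=\dim\Delta$, which is explicitly allowed by its hypothesis $0\leq k\leq d$, so there is nothing to check there beyond observing that $\Delta=\partial(\sigma_{d+1})$ is indeed $d$-dimensional.
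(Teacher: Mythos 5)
Your proposal is correct and follows essentially the same route as the paper, which states the corollary as an immediate consequence of \Cref{prop: vertices P(k)} (giving $f_d(\partial(\sigma_{d+1}))=d+2$ vertices) together with \Cref{thm: dimension} (giving $\dim P_\Delta=d+1$ for odd $d$), whence a $(d+1)$-polytope with $d+2$ vertices is a simplex. Your extra remarks—the minimal-vertex-count characterization of simplices and the alternative affine-independence argument via \Cref{lem: Lap Mat with 1 rank}—are correct fillings-in of details the paper leaves implicit.
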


\Cref{thm: P simplex d odd and number of vertices} trivially implies that $P_{\partial(\sigma_{d+1})}$ is a simplicial polytope if $d$ is odd. The same statement turns out to be true for $d$ even.
\begin{theorem}\label{thm: simplicial}
	$P_{\partial(\sigma_{d+1})}$ is simplicial for every $d\in\N$.
\end{theorem}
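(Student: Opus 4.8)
The plan is to reduce to the case $d$ even and then to count the vertices lying on each facet. By \Cref{thm: P simplex d odd and number of vertices}, $P_{\partial(\sigma_{d+1})}$ is a $(d+1)$-simplex when $d$ is odd, so it is trivially simplicial; hence I would only need to treat the case that $d$ is even. In this case \Cref{thm: dimension} gives $\dim P_{\partial(\sigma_{d+1})}=d$, while \Cref{thm: P simplex d odd and number of vertices} (together with \Cref{prop: vertices P(k)}) tells us that $P_{\partial(\sigma_{d+1})}$ has exactly $d+2$ vertices, namely the columns $s^{(1)},\ldots,s^{(d+2)}$ of $\Lc_d(\partial(\sigma_{d+1}))$.

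The core observation I would exploit is that every facet $F$ is a $(d-1)$-dimensional face whose vertex set is a subset of $\{s^{(1)},\ldots,s^{(d+2)}\}$, since vertices of a face of a polytope are vertices of the polytope. On the one hand, being $(d-1)$-dimensional, $F$ has at least $d$ vertices. On the other hand, I claim $F$ has at most $d$ vertices: suppose $F$ contained $d+1$ of the columns. By \Cref{lem: linear combination of columns}, any $(d+1)$-element subset of columns of $\Lc_d(\partial(\sigma_{d+1}))$ is linearly independent, and linear independence implies affine independence, so these $d+1$ points would span a $d$-dimensional affine subspace. This contradicts $F$ being contained in its own $(d-1)$-dimensional affine hull. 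Hence each facet has exactly $d$ vertices and is therefore a $(d-1)$-simplex, which is precisely what \emph{simplicial} requires.

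The main (and essentially only) point requiring care is the passage from linear to affine independence, together with correct dimension bookkeeping: the polytope lives in $\R^{d+2}$ but has affine dimension $d$, a facet has affine dimension $d-1$, and $d+1$ affinely independent points cannot lie in a $(d-1)$-dimensional affine subspace. I do not expect any serious obstacle here, since the heavy lifting~--~the linear independence of every $(d+1)$-subset of columns~--~has already been established in \Cref{lem: linear combination of columns}; the remainder is a short dimension count.
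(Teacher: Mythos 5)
Your argument is correct and is essentially the paper's own proof: the paper likewise reduces to the case of even $d$, bounds the vertex set $F$ of a facet by $d\leq|F|\leq d+1$, and rules out $|F|=d+1$ via \Cref{lem: linear combination of columns}, exactly your linear-to-affine-independence dimension count. The only (harmless) omission is the degenerate case $d=0$, where $P_{\partial(\sigma_{1})}$ is a single point (so it has one vertex rather than $d+2=2$, and your vertex count does not literally apply) and is trivially simplicial, which the paper dispatches in one line before assuming $d\geq 2$.
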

\begin{proof}
Let $\Delta=\partial(\sigma_{d+1})$. If $d$ is odd, then the claim is trivially true by  \Cref{thm: P simplex d odd and number of vertices}.

Now, let $d$ be even. If $d=0$, then $P_\Delta$ is just the origin and as such simplicial. Let $d\geq 2$ and let $F$ be the vertices of a facet of $P_\Delta$. Combining \Cref{thm: dimension} and \Cref{thm: P simplex d odd and number of vertices} it follows that $d\leq |F|\leq d+1$. If, by contradiction, $|F|=d+1$, then \Cref{lem: linear combination of columns} implies that the convex hull of $F$ is $d$-dimensional,  i.e., $F$ cannot be a facet. Consequently, $F$ is a simplex, which finishes the proof.
\end{proof}

 As, by \Cref{thm: dimension}, the Laplacian polytope of $\partial(\sigma_{d+1})$ is never full-dimensional, our next goal is to construct  a polytope that is unimodular equivalent to $P_{\partial(\sigma_{d+1})}$ and full-dimensional with respect to its ambient space. We first need to introduce some further notation. 
 
 We let $\oneb_{\mathrm{even}}$ and $\oneb_{\mathrm{odd}}$ denote the $0-1$-vectors in $\R^{d+2}$ whose even and odd entries are equal to $1$, respectively. Given these definitions, we can easily compute the affine hull of $P_{\partial(\sigma_{d+1})}$.
 
\begin{lem}\label{lem: affine hull}
Let $d\in \N$ with $d\geq 1$ and $\Delta=\partial(\sigma_{d+1})$. 
    \begin{equation*}
    \mathrm{aff}(P_{\Delta})=\begin{cases}
      \left\{x\in\R^{d+2}~:~(\oneb_{\mathrm{odd}}-\oneb_{\mathrm{even}})^\intercal\cdot x=0\right\}, & \mbox{if } d \mbox{ is odd,}\\
      \left\{x\in\R^{d+2}~:~\oneb_{\mathrm{odd}}^\intercal \cdot x=\oneb_{\mathrm{even}}^\intercal\cdot x=\frac{d+2}{2}\right\}, & \mbox{if } d \mbox{ is even}.\\
     \end{cases}
  \end{equation*}
\end{lem}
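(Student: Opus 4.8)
The plan is to use the fact that, by \Cref{thm: dimension}, the dimension of $\mathrm{aff}(P_\Delta)$ is already known: it equals $d+1$ if $d$ is odd and $d$ if $d$ is even (recall $\dim P_\Delta=\dim\mathrm{aff}(P_\Delta)$). Since $\mathrm{aff}(P_\Delta)$ is the smallest affine subspace containing the columns $s^{(1)},\ldots,s^{(d+2)}$ of $\Lc_d(\Delta)$, it suffices to verify two things. First, that every column $s^{(i)}$ satisfies the defining equation(s) of the claimed affine subspace $H$, so that $\mathrm{aff}(P_\Delta)\subseteq H$. Second, that $\dim H$ equals the known value of $\dim\mathrm{aff}(P_\Delta)$. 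As an affine subspace contained in an affine subspace of the same dimension must coincide with it, these two checks force $\mathrm{aff}(P_\Delta)=H$.

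For the containment I would insert the explicit entries of $\Lc_d(\Delta)$ from \Cref{thm: Lap Mat Thm Boundary}, namely $\Lc_d(\Delta)_{ki}=d+1$ if $k=i$ and $(-1)^{k+i-1}$ otherwise, into the relevant linear forms. In the odd case, writing the normal $\oneb_{\mathrm{odd}}-\oneb_{\mathrm{even}}$ as the vector with $k$-th entry $(-1)^{k-1}$, a short computation gives
\[
(\oneb_{\mathrm{odd}}-\oneb_{\mathrm{even}})^\intercal s^{(i)}=(-1)^{i-1}(d+1)+\sum_{k\neq i}(-1)^{k-1}(-1)^{k+i-1}=(d+1)\big((-1)^{i-1}+(-1)^{i}\big)=0,
\]
so each column lies on the hyperplane $H=\{x:(\oneb_{\mathrm{odd}}-\oneb_{\mathrm{even}})^\intercal x=0\}$. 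In the even case, $d+2$ is even, so $[d+2]$ contains exactly $\tfrac{d+2}{2}$ odd and $\tfrac{d+2}{2}$ even indices, and a sign count distinguishing the parity of $i$ (and whether the diagonal entry $d+1$ is picked up) shows $\oneb_{\mathrm{odd}}^\intercal s^{(i)}=\oneb_{\mathrm{even}}^\intercal s^{(i)}=\tfrac{d+2}{2}$ for every $i$.

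For the dimension count, the odd case is immediate: $H$ is a single linear hyperplane, hence $\dim H=d+1=\dim\mathrm{aff}(P_\Delta)$. In the even case, $H$ is cut out by the two affine equations $\oneb_{\mathrm{odd}}^\intercal x=\tfrac{d+2}{2}$ and $\oneb_{\mathrm{even}}^\intercal x=\tfrac{d+2}{2}$; since $\oneb_{\mathrm{odd}}$ and $\oneb_{\mathrm{even}}$ have disjoint supports and are therefore linearly independent, $H$ has codimension $2$, i.e.\ $\dim H=d=\dim\mathrm{aff}(P_\Delta)$. In both cases the dimensions agree, which completes the argument.

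I do not expect a genuine obstacle here: the only nontrivial ingredient is the already established value of $\dim P_\Delta$ from \Cref{thm: dimension}, and the remaining work is routine sign bookkeeping in the inner-product computations. The one point requiring a little care is making the parity arguments in the even case uniform in $i$, that is, correctly tracking when the diagonal contribution $d+1$ enters the sum for $\oneb_{\mathrm{odd}}^\intercal s^{(i)}$ versus $\oneb_{\mathrm{even}}^\intercal s^{(i)}$.
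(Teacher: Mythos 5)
Your proposal is correct and matches the paper's proof, which likewise invokes \Cref{thm: dimension} to reduce the claim to checking that all vertices of $P_\Delta$ lie in the stated subspaces of dimension $d+1$ and $d$, verified by direct computation. Your sign computations (e.g.\ $(-1)^{k-1}(-1)^{k+i-1}=(-1)^i$ in the odd case, and the count of $\tfrac{d+2}{2}$ indices of each parity in the even case) correctly supply the details the paper leaves to the reader.
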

\begin{proof} 
By \Cref{thm: dimension}, it is enough to show that all vertices of $P_\Delta$ lie in the specified subspaces of dimension  $d+1$ and $d$, respectively. This can  be seen by a direct computation.
 \end{proof}

The next lemma gives the desired unimodular equivalent polytopes.

\begin{lem}\label{lem: unimodular equivalence}
Let $d\in \N$. The polytope $P_{\partial(\sigma_{d+1})}$ is unimodular equivalent to $\conv(\Lc_d(\{1\}))$ and $\conv\left(\Lc_d(\{1,2\})\right)$ if $d$ is odd and even, respectively.
\end{lem}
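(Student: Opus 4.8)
The plan is to realize the operation of deleting rows as an honest coordinate projection and to check that this projection restricts to a lattice isomorphism on the affine hull of $P_{\partial(\sigma_{d+1})}$. Write $\pi_1\colon \R^{d+2}\to\R^{d+1}$ for the projection forgetting the first coordinate and $\pi_{12}\colon\R^{d+2}\to\R^{d}$ for the projection forgetting the first two coordinates. Since deleting the $i$-th row of $\Lc_d(\partial(\sigma_{d+1}))$ amounts to deleting the $i$-th entry of each of its columns, we have $\pi_1(P_{\partial(\sigma_{d+1})})=\conv(\Lc_d(\{1\}))$ and $\pi_{12}(P_{\partial(\sigma_{d+1})})=\conv(\Lc_d(\{1,2\}))$. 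By \Cref{thm: Lap Mat Thm Boundary} every column of $\Lc_d(\partial(\sigma_{d+1}))$ is an integer vector, so $P_{\partial(\sigma_{d+1})}$ is a lattice polytope and $\mathrm{aff}(P_{\partial(\sigma_{d+1})})\cap\Z^{d+2}$ is a genuine affine lattice. It therefore suffices to show that $\pi_1$ (if $d$ is odd) and $\pi_{12}$ (if $d$ is even) map this affine lattice bijectively onto $\Z^{d+1}$ and $\Z^{d}$, respectively; this is exactly the assertion that the polytopes in question are unimodular equivalent.

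For $d$ odd, \Cref{lem: affine hull} gives $\mathrm{aff}(P_{\partial(\sigma_{d+1})})=\{x\in\R^{d+2} : (\oneb_{\mathrm{odd}}-\oneb_{\mathrm{even}})^\intercal x = 0\}$. Since the first index is odd, the coefficient of $x_1$ in this equation equals $1$, so on the affine hull one may solve $x_1=\sum_{j=2}^{d+2}(-1)^{j}x_j$, an integer affine function of $x_2,\dots,x_{d+2}$. Consequently $\pi_1$ is injective on $\mathrm{aff}(P_{\partial(\sigma_{d+1})})$, its image is all of $\R^{d+1}$ (as the hyperplane has dimension $d+1$ by \Cref{thm: dimension}), and for any $(x_2,\dots,x_{d+2})\in\Z^{d+1}$ the recovered point lies in $\mathrm{aff}(P_{\partial(\sigma_{d+1})})\cap\Z^{d+2}$. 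Hence $\pi_1$ is the desired lattice isomorphism and $P_{\partial(\sigma_{d+1})}\cong\conv(\Lc_d(\{1\}))$.

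For $d$ even, \Cref{lem: affine hull} gives the two equations $\oneb_{\mathrm{odd}}^\intercal x=\oneb_{\mathrm{even}}^\intercal x=\tfrac{d+2}{2}$, where $\tfrac{d+2}{2}$ is an integer precisely because $d$ is even. The coefficients of $x_1$ in the first equation and of $x_2$ in the second equation both equal $1$, so on the affine hull we may solve $x_1=\tfrac{d+2}{2}-\sum_{j\ge 3,\ j\ \mathrm{odd}}x_j$ and $x_2=\tfrac{d+2}{2}-\sum_{j\ge 4,\ j\ \mathrm{even}}x_j$, again integer affine functions of $x_3,\dots,x_{d+2}$. As before, $\pi_{12}$ is injective on $\mathrm{aff}(P_{\partial(\sigma_{d+1})})$, surjects onto $\R^{d}$, and restricts to a bijection $\mathrm{aff}(P_{\partial(\sigma_{d+1})})\cap\Z^{d+2}\to\Z^{d}$, which yields $P_{\partial(\sigma_{d+1})}\cong\conv(\Lc_d(\{1,2\}))$.

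The only genuine subtlety is conceptual rather than computational: since $P_{\partial(\sigma_{d+1})}$ is not full-dimensional, unimodular equivalence must be read as a lattice-preserving affine isomorphism between affine hulls, and the content of the argument is exactly that the deleted coordinates are recoverable by an \emph{integral} affine formula, so the projection admits an integral affine inverse on $\mathrm{aff}(P_{\partial(\sigma_{d+1})})$ and is therefore unimodular. The verification of the recovery formulas from \Cref{lem: affine hull} is routine; the two points to keep track of are that the relevant diagonal coefficients equal $1$ (so that no denominators appear) and that the constant $\tfrac{d+2}{2}$ is integral exactly in the even case.
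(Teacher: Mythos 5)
Your proof is correct and is essentially the paper's own argument in different packaging: the paper likewise reduces everything to \Cref{lem: affine hull}, but encodes your integral recovery formulas as explicit unimodular matrices $A,B\in\Z^{(d+2)\times(d+2)}$ whose top row(s) are the affine-hull functionals and whose remaining rows implement your projections $\pi_1$ and $\pi_{12}$, so that $A\cdot P_{\partial(\sigma_{d+1})}=\{0\}\times\conv(\Lc_d(\{1\}))$ for $d$ odd and $B\cdot P_{\partial(\sigma_{d+1})}=\{(\tfrac{d+2}{2},\tfrac{d+2}{2})\}\times\conv(\Lc_d(\{1,2\}))$ for $d$ even. Your observation that the deleted coordinates are recoverable by integer affine formulas (with unit diagonal coefficients and integral constant $\tfrac{d+2}{2}$) is precisely what makes the paper's ambient matrices unimodular, so the two arguments coincide in substance.
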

\begin{proof}
Define matrices $A,B\in\Z^{(d+2)\times (d+2)}$ as follows
\begin{equation*}
  A=\left(\begin{array}{clccc}
               & & &\oneb_{\mathrm{odd}}^\intercal-\oneb_{\mathrm{even}}^\intercal &  \\ \hline
              0 &\vline\hfill & &  &  \\
              \vdots &\vline\hfill & & E_{d+1} &  \\
              0 &\vline\hfill & &  &
            \end{array}\right)
\quad
\mbox{and} \quad
  B=\left(\begin{array}{cclccc}
               &  & & \oneb_{\mathrm{odd}}^\intercal &  &  \\
               &  & & \oneb_{\mathrm{even}}^\intercal &  &  \\ \hline
              0 & 0 &\vline\hfill&  &  &  \\
              \vdots & \vdots &\vline\hfill& & E_d  &  \\
              0 & 0 &\vline\hfill & &  &
            \end{array}\right),
\end{equation*}
where $E_d$ and $E_{d+1}$ denote identity matrices. Note that $A$ and $B$ are unimodular. By \Cref{lem: affine hull}, we conclude that 
\[
A\cdot P_{\partial(\sigma_{d+1})}=\{0\}\times \conv(\Lc_d(\{1\})),
\]
if $d$ is odd
and
\[
B\cdot P_{\partial(\sigma_{d+1})}=\{((d+2)/2,(d+2)/2)\}\times \conv\left(\Lc_d(\{1,2\})\right),
\]
if $d$ is even. This finishes the proof.
\end{proof}
In the following, we use $\widetilde{P}_{\partial(\sigma_{d+1})}$ to denote the unimodular equivalent polytope to $P_{\partial(\sigma_{d+1})}$ as constructed in \Cref{lem: unimodular equivalence}. By abuse of notation, we will also refer to $\widetilde{P}_{\partial(\sigma_{d+1})}$ as $d$-th Laplacian polytope of $\partial(\sigma_{d+1})$. We also want to remark that, if $d$ is odd, we have the following, easy-to-show containment relation: $\widetilde{P}_{\partial(\sigma_{d+1})}\subseteq \widetilde{P}_{\partial(\sigma_{d+2})}$.


\section{The facet description and the combinatorial type of $P_{\partial(\sigma_{d+1})}$}\label{sec:combinatorial type}
While, for odd $d$, we have already seen that $\widetilde{P}_{\partial (\sigma_{d+1})}$ is a simplex, the goal of this section is to determine the combinatorial type of $P_{\partial(\sigma_{d+1})}$ if $d$ is even. To reach this goal, we will first provide a complete irredundant facet description of $P_{\partial(\sigma_{d+1})}$.

We fix some notation. Let $b^{(\ell)}$ denote the vertex of $\widetilde{P}_{\partial(\sigma_{d+1})}$, that is given by the $\ell$-th column of $\Lc_d(\{1,2\})$. By \Cref{thm: Lap Mat Thm Boundary}, we have $b^{(\ell)}_k=d+1$ if $k=\ell-2$ and  $b^{(\ell)}_k=(-1)^{k+\ell-1}$, otherwise.  

\begin{prop}\label{thm: Facet inequalities and vertices}
	Let $d\geq2$ be even. Then the following inequalities are facet-defining and irredundant for $\widetilde{P}_{\partial (\sigma_{d+1})}$
	\begin{itemize}
	\item[\emph{(i)}] $\oneb^{\intercal}\cdot x\leq d+2$,
	\item[\emph{(ii)}]$\oneb_{\mathrm{odd}}^{\intercal} \cdot x-x_i\leq \frac{d+2}{2}$, where $i\in [d]$ is even,
	\item[\emph{(iii)}] $\oneb_{\mathrm{even}}^{\intercal} \cdot x-x_j\leq \frac{d+2}{2}$, where $j\in [d]$ is odd,
	\item[\emph{(iv)}] $x_i+x_{j}\geq 0$, where $1\leq i<j\leq d$ such that $i+j$ is odd.
\end{itemize}
Moreover, the vertices, that attain equality  in \emph{(i)}--\emph{(iv)}, are given by the sets $\{b^{(\ell)}~:~3\leq \ell\leq d+2\}$, $\{b^{(\ell)}~:~\ell\in [d+2]\setminus \{1,i+2\}\}$, $\{b^{(\ell)}~:~\ell\in [d+2]\setminus \{2,j+2\}\}$ and $\{b^{(\ell)}~:~\ell\in [d+2]\setminus \{i+2,j+2\}\}$, respectively.
\end{prop}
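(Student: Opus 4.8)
The plan is to handle the four families of inequalities by a single direct evaluation against the explicit vertices, and then to upgrade ``valid inequality with a prescribed equality set'' to ``facet'' using the general position recorded in \Cref{lem: linear combination of columns}. I work in the ambient space $\R^d$ of $\widetilde{P}_{\partial(\sigma_{d+1})}$, whose vertices are the columns $b^{(\ell)}$ with $b^{(\ell)}_k=d+1$ for $k=\ell-2$ and $b^{(\ell)}_k=(-1)^{k+\ell-1}$ otherwise. By \Cref{thm: dimension}, \Cref{thm: P simplex d odd and number of vertices} and \Cref{thm: simplicial}, the polytope $\widetilde{P}_{\partial(\sigma_{d+1})}$ is a simplicial $d$-polytope with exactly the $d+2$ vertices $b^{(1)},\dots,b^{(d+2)}$, so every facet is a $(d-1)$-simplex spanned by exactly $d$ of them. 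Hence it suffices to show, for each listed inequality, that (a) it is valid, (b) the vertices attaining equality are precisely the $d$ claimed ones while all others satisfy it strictly, and (c) those $d$ vertices are affinely independent. Then the equality set is a genuine facet; and since the four families produce the pairwise distinct off-vertex pairs $\{1,2\}$, $\{1,i+2\}$, $\{2,j+2\}$ and $\{i+2,j+2\}$, the facets are pairwise distinct and the description is irredundant.

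For the computational core I would evaluate each functional on a general $b^{(\ell)}$, using repeatedly that, as $d$ is even, any alternating sum $\sum_{k=1}^d(-1)^{k+c}$ over the full range $[d]$ vanishes, and more precisely $\sum_{k\text{ odd}}(-1)^{k+\ell-1}=\tfrac{d}{2}(-1)^\ell$. The cleanest cases are (i) and (iv): for (i) one finds $\oneb^{\intercal}b^{(\ell)}=0$ for $\ell\in\{1,2\}$ and $\oneb^{\intercal}b^{(\ell)}=(d+1)+1=d+2$ for $\ell\geq 3$, pinning the equality set to $\{b^{(\ell)}:3\le\ell\le d+2\}$; for (iv), when $i+j$ is odd the entries $b^{(\ell)}_i$ and $b^{(\ell)}_j$ have opposite signs and cancel unless $\ell\in\{i+2,j+2\}$, in which case one of them equals $d+1$ and the functional jumps to $d+2>0$, giving equality exactly on $[d+2]\setminus\{i+2,j+2\}$. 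The delicate family is (ii) (and, by symmetry, (iii)): here I would split according to whether the distinguished position $\ell-2$ carrying the entry $d+1$ is odd or even, and whether it coincides with the subtracted index $i$. Four sub-cases arise, and the main point is that they all collapse to the common value $\tfrac{d+2}{2}$, except for $\ell=1$ and $\ell=i+2$, where the functional equals $-\tfrac{d+2}{2}$; this identifies the equality set as $[d+2]\setminus\{1,i+2\}$ and simultaneously yields validity and strictness off the facet. Organizing these parity sub-cases is the main obstacle of the proof, and it is precisely here that the hypothesis ``$d$ even'' is used.

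To halve the bookkeeping I would record the unimodular symmetry $\tau\colon\R^d\to\R^d$, $(\tau x)_k=x_{d+1-k}$, which (since $d$ is even) swaps odd and even coordinate positions, acts on the vertices by $b^{(1)}\leftrightarrow b^{(2)}$ and $b^{(\ell)}\mapsto b^{(d+5-\ell)}$ for $3\le\ell\le d+2$, and carries family (ii) onto family (iii) via $j=d+1-i$ (while permuting (iv) and fixing (i)); thus (iii) follows from (ii) with no further computation. Finally, for the affine independence in (c): by \Cref{lem: linear combination of columns} any $d+1$ columns of $\Lc_d(\partial(\sigma_{d+1}))$ are linearly independent, hence affinely independent as points, so any $d$-element subset of them is affinely independent. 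Since $\widetilde{P}_{\partial(\sigma_{d+1})}$ arises from $P_{\partial(\sigma_{d+1})}$ through the unimodular affine map of \Cref{lem: unimodular equivalence}, which merely adjoins a fixed pair of coordinates to each $b^{(\ell)}$, affine independence is preserved; in particular every $d$ of the vertices $b^{(1)},\dots,b^{(d+2)}$ are affinely independent. This certifies that each computed equality set spans a hyperplane and is therefore a facet, completing the argument.
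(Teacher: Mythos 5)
Your proof is correct and takes essentially the same route as the paper's: you verify validity and identify the equality sets by evaluating each functional directly on the vertices $b^{(\ell)}$, certify that each equality set spans a hyperplane via the affine independence coming from \Cref{lem: linear combination of columns}, and deduce irredundancy from the pairwise distinctness of the equality sets --- exactly the structure of the paper's argument, which carries out case (i) and declares (ii)--(iv) ``straightforward.'' Your two additions, the coordinate-reversal symmetry $\tau$ reducing family (iii) to family (ii) and the use of \Cref{thm: simplicial} to know facets carry exactly $d$ vertices, are correct, labor-saving refinements rather than a different method.
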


\begin{proof}
We first consider the inequality in (i). If $\ell\in \{1,2\}$, then $b^{(\ell)}\in \{-1,1\}^d$ with alternating entries and hence $\oneb^{\intercal}\cdot b^{(\ell)}<d+2$. Let $3\leq \ell\leq d+2$. As $d$ is even, it follows from above that $b^{(\ell)}$ has one entry equal to $d+1$, $\frac{d}{2}$ entries equal to $1$ and $\frac{d}{2}-1$ entries  equal to $-1$. This implies $\oneb^{\intercal}\cdot b^{(\ell)}=d+2$. Hence, the inequality in (i) defines a facet, whose vertices are given by $\{b^{(\ell)}~:~3\leq \ell\leq d+2\}$, where we use that the affine hull of the latter set is $(d-1)$-dimensional by  \Cref{lem: linear combination of columns}.

Similarly, it is straightforward to verify that the inequalities in (ii)--(iv) are valid for $\widetilde{P}_{\partial (\sigma_{d+1})}$ and that the given sets of vertices are the ones attaining equality. As those all differ and their affine hulls all have dimension $d-1$, it follows that the inequalities are irredundant.
\end{proof}

For the sake of completeness we add the description of the facets of $\widetilde{P}_{\partial (\sigma_{d+1})}$ for $d$ odd.

 \begin{rem} 
 If $d\geq 3$ is odd, using \Cref{thm: Lap Mat Thm Boundary} it is not hard to see, that the following inequalities are facet-defining for $\widetilde{P}_{\partial (\sigma_{d+1})}$
 	\begin{itemize}
	\item[\emph{(i)}] $\oneb^{\intercal}\cdot x\leq d+2$,
	\item[\emph{(ii)}]$2\cdot \oneb_{\mathrm{odd}}^{\intercal} \cdot x-x_i\leq \frac{d+2}{2}$, where $i\in [d+1]$ is even,
	\item[\emph{(iii)}] $2\cdot \oneb_{\mathrm{odd}}^{\intercal} \cdot x+x_j\leq d+2$, where $j\in [d]$ is odd.
\end{itemize}
It is easy to verify that these inequalities are irredundant and as, by \Cref{thm: dimension}, $\widetilde{P}_{\partial (\sigma_{d+1})}$ is a simplex, they provide the complete facet description of $\widetilde{P}_{\partial (\sigma_{d+1})}$. We omit an explicit proof since this description will not be needed. 
 \end{rem}

We state the first main result of this section.

\begin{theorem}\label{thm: complete facet discription}
	For $d$ even, $\widetilde{P}_{\partial (\sigma_{d+1})}$ is completely described by the inequalities in  \Cref{thm: Facet inequalities and vertices}. Moreover, this description is irredundant. In particular, $\widetilde{P}_{\partial (\sigma_{d+1})}$ has $\frac{(d+2)^2}{4}$ many facets.
\end{theorem}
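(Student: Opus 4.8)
The plan is to combine the facet-defining inequalities already established in \Cref{thm: Facet inequalities and vertices} with the structural fact that $\widetilde{P}_{\partial(\sigma_{d+1})}$ is a simplicial $d$-polytope on exactly $d+2$ vertices, so that the only thing left to do is rule out further facets. First I would record the consequence of \Cref{thm: simplicial}, \Cref{thm: dimension} and \Cref{thm: P simplex d odd and number of vertices}: since $\widetilde{P}_{\partial(\sigma_{d+1})}$ is simplicial of dimension $d$ with vertices $b^{(1)},\dots,b^{(d+2)}$, every facet is a $(d-1)$-simplex spanned by exactly $d$ of these vertices and is therefore uniquely determined by the two-element set of vertices it omits. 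Distinct facets omit distinct pairs, so the facets inject into the $\binom{d+2}{2}$ two-element subsets of $[d+2]$, and the asserted count will follow once I identify which pairs actually occur.

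Next I would make the parity bookkeeping explicit. Call a pair $\{a,b\}\subseteq[d+2]$ \emph{mixed} if $a$ and $b$ have opposite parities and \emph{pure} otherwise; since $d$ is even there are $\frac{d+2}{2}$ odd and $\frac{d+2}{2}$ even indices, hence exactly $\frac{(d+2)^2}{4}$ mixed pairs. Reading off the vertex sets attaining equality in \Cref{thm: Facet inequalities and vertices}, the facets in families (i)--(iv) omit precisely the pairs $\{1,2\}$; $\{1,i+2\}$ with $i$ even; $\{2,j+2\}$ with $j$ odd; and $\{i+2,j+2\}$ with $i+j$ odd, respectively. A short check (tracking the index shift $\ell\mapsto\ell-2$) shows that these four families are pairwise disjoint and together exhaust exactly the mixed pairs. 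Thus \Cref{thm: Facet inequalities and vertices} already exhibits all $\frac{(d+2)^2}{4}$ mixed pairs as facets, and the whole theorem reduces to showing that no pure pair is the omitted pair of a facet.

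The key step, and the main obstacle, is precisely this completeness claim, which I would settle by a local incidence argument rather than by producing more inequalities. Suppose some pure pair $\{a,b\}$ were omitted by a facet $F$. Because $d\ge 2$ there is a vertex $c$ whose parity is opposite to that of $a$ and $b$, and then the $(d-2)$-face $G:=F\setminus\{c\}$ is a ridge omitting the triple $\{a,b,c\}$. Since both $\{a,c\}$ and $\{b,c\}$ are mixed, they are omitted pairs of genuine facets by the previous paragraph, and each of those two facets contains $G$; together with $F$ this produces three distinct facets (omitting the three distinct pairs $\{a,b\},\{a,c\},\{b,c\}$) incident to the single ridge $G$, contradicting the standard fact that every ridge of a polytope lies in exactly two facets \cite{Ziegler}. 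Hence every facet omits a mixed pair, the facets are exactly the $\frac{(d+2)^2}{4}$ mixed pairs, and the description in \Cref{thm: Facet inequalities and vertices} is complete, with irredundancy and the facet count inherited from that proposition.

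I expect the only delicate points to be pure bookkeeping: verifying that families (i)--(iv) really biject with the mixed pairs, and confirming that the three facets produced in the contradiction are genuinely distinct and all contain $G$ (immediate, since they correspond to the three distinct pairs of the triple $\{a,b,c\}$). No geometry beyond \Cref{thm: Facet inequalities and vertices} and the simplicial structure of $\widetilde{P}_{\partial(\sigma_{d+1})}$ is needed.
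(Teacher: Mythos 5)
Your proof is correct, but it takes a genuinely different route from the paper's. Both arguments start from the same prerequisites --- \Cref{thm: simplicial}, \Cref{thm: dimension}, \Cref{thm: P simplex d odd and number of vertices} and \Cref{thm: Facet inequalities and vertices} --- and from the observation that every facet is a $(d-1)$-simplex on exactly $d$ of the $d+2$ vertices, hence determined by the pair of vertices it omits. From there the paper works globally in the facet-ridge graph: it counts, in an eight-case analysis of pairwise intersections of the known facets, the number of edges of the induced subgraph $G_{\widetilde{\Fc}}$, finds it equal to $\frac{d}{2}\cdot|V(G_{\widetilde{\Fc}})|$, concludes that $G_{\widetilde{\Fc}}$ is $d$-regular, and then uses $d$-regularity and connectedness of the full facet-ridge graph to rule out further facets. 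You instead classify omitted pairs by parity, verify that the families (i)--(iv) of \Cref{thm: Facet inequalities and vertices} are pairwise disjoint and biject with the $\frac{(d+2)^2}{4}$ mixed-parity pairs (your bookkeeping here is right and reproduces the count \eqref{eq: number facets}), and eliminate any hypothetical facet $F$ omitting a pure pair $\{a,b\}$ by a local diamond-property argument: choosing a vertex $c$ of opposite parity yields the ridge on vertex set $[d+2]\setminus\{a,b,c\}$, which would lie in the three distinct facets omitting $\{a,b\}$, $\{a,c\}$ and $\{b,c\}$, contradicting the fact that every ridge of a polytope lies in exactly two facets. Your route is shorter and more conceptual --- it replaces the paper's case analysis (a)--(h) with a three-line contradiction, and it invokes the two-facets-per-ridge property explicitly where the paper uses it only implicitly through the $d$-regularity of the facet-ridge graph of a simplicial $d$-polytope; what the paper's longer computation buys in exchange is the complete list of facet-ridge adjacencies, which it reuses immediately afterwards to write down a shelling order, data your argument does not produce. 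Two small points worth making explicit in a final write-up: the vertex $c$ exists and is distinct from $a,b$ because $d\geq 2$ gives $\frac{d+2}{2}\geq 2$ vertices of each parity (consistent with the hypothesis $d\geq 2$ in \Cref{thm: Facet inequalities and vertices}), and $G=F\setminus\{c\}$ is indeed a ridge of the polytope because the facets are simplices and a face of a facet is a face of the polytope.
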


\begin{proof}
We let $\widetilde{\Fc}$ denote the set of facets of $\widetilde{P}_{\partial (\sigma_{d+1})}$, provided by \Cref{thm: Facet inequalities and vertices} and we write $G_{\widetilde{\Fc}}$ for the  subgraph of the facet-ridge graph of $\widetilde{P}_{\partial (\sigma_{d+1})}$ that is induced on vertex set $\widetilde{\Fc}$. It follows from \Cref{thm: simplicial}, that the facet-ridge graph of $\widetilde{P}_{\partial (\sigma_{d+1})}$ is $d$-regular and connected. Since any $d$-regular subgraph does not have a proper $d$-regular subgraph, for the first statement, it suffices to show that $G_{\widetilde{\Fc}}$ is $d$-regular.

 Since $G_{\widetilde{\Fc}}$ is a subgraph of $G(\widetilde{P}_{\partial(\sigma_{d+1})})$,  its maximal degree is at most $d$. Hence, to show the claim, it suffices to show that $|E(G_{\widetilde{\Fc}})|=\frac{d\cdot |V(G_{\widetilde{\Fc}})|}{2}$.

We first count the vertices of $G$. Using \Cref{thm: Facet inequalities and vertices}, we get that
\begin{equation}\label{eq: number facets}
|V(G_{\widetilde{\Fc}})|=1+\frac{d}{2}+\frac{d}{2}+\left(\frac{d}{2}\right)^2=\frac{(d+2)^2}{4},
\end{equation}
Here, the last term in the middle comes from the fact that the inequalities in (iv) are indexed by sets $\{i,j\}$ where $i\in\{2\ell~:´~\ell\in [\frac{d}{2}]\}$ and $j\in\{2\ell-1~:´~\ell\in [\frac{d}{2}]\}$.

It remains to count the number of edges of $G_{\widetilde{\Fc}}$. In the following, we identify a facet in $\widetilde{\Fc}$ with its set of vertices. Given this, we use the following short hand notation for the different types of facets in $\widetilde{\Fc}$. 
\begin{itemize}
\item[(i)] $F=\{b^{(\ell)}~:~3\leq\ell\leq d+2\}$;
\item[(ii)] $E_i=\{b^{(\ell)}~:~\ell\in [d+2]\setminus \{1,i+2\}\}$, where $i\in [d]$ is even;
\item[(iii)]$O_j=\{b^{(\ell)}~:~\ell\in [d+2]\setminus \{2,j+2\}\}$, where $j\in [d]$ is odd;
\item[(iv)] $F_{k,m}=\{b^{(\ell)}~:~\ell\in [d+2]\setminus \{k+2,m+2\}\}$ for $1\leq k<m\leq d$ such that $k+m$ is odd.
\end{itemize}
We immediately get that
\begin{itemize}
\item[(a)] $|F\cap E_i|=|F\cap O_j|=d-1$ for all even $i\in [d]$ and all odd $j\in [d]$;
\item[(b)] $|F\cap F_{k,m}|=d-2$ for all $1\leq k<m\leq d$;
\item[(c)] $|E_i\cap E_j|=d-1$ for all odd $i,j\in [d]$ with $i\neq j$;
\item[(d)] $|E_i\cap O_j|=d-2$ for all even $i\in [d]$ and all odd $j\in [d]$;
\item[(e)] $|E_i\cap F_{k,m}|=d-1$ iff $i\in \{k,m\}$, $i$ even, $k+m$ odd, and $|E_i\cap F_{k,m}|=d-2$, otherwise;
\item[(f)] $|O_i\cap O_j|=d-1$ for all even $i,j\in [d]$ with $i\neq j$;
\item[(g)] $|O_j\cap F_{k,m}|=d-1$ iff $j\in\{k,m\}$, $j$ odd, $k+m$ odd, and $|O_j\cap F_{k,m}|=d-2$, otherwise.
\item[(h)] $|F_{i,j}\cap F_{k,m}|=d-1$ iff $|\{i,j,k,m\}|=3$ and  $|F_{i,j}\cap F_{k,m}|=d-2$, otherwise.
\end{itemize}
Since edges of $G_{\widetilde{\Fc}}$ are given by tuples of facets intersecting in $d-1$ vertices, we get $d$ edges in (a), $0$ edges in (b) and  (d), $\binom{d/2}{2}$ edges in each of (c) and (f), $\left(\frac{d}{2}\right)^2$ edges in each of (e) and (g) and $2\cdot\frac{d}{2}\cdot\binom{d/2}{2}$ edges in (h). This yields
\[
|E(G_{\widetilde{\Fc}})|=d+2\cdot \binom{d/2}{2} +2\cdot \left(\frac{d}{2}\right)^2+d\cdot \binom{d/2}{2}=\frac{d(d+2)^2}{8}=\frac{d\cdot |V(G_{\widetilde{\Fc}})|}{2}.
\]
It follows that $G_{\widetilde{\Fc}}$ is $d$-regular. The \emph{In particular}-statement follows from \eqref{eq: number facets}.
 \end{proof}
 
The previous theorem allows us to determine the combinatorial type of $\widetilde{P}_{\partial (\sigma_{d+1})}$ if $d$ is even. It is well-known (see e.g., \cite[Section 6.1]{Grunbaum}) that there are only finitely many combinatorial types of simplicial $d$-polytopes with $d+2$ vertices. More precisely, any simplicial $d$-polytope with $d+2$ vertices is obtained as the convex hull of a $d$-simplex $T^d$ and a vertex $v$ that is beyond $k$ facets of $T^d$, where $1\leq k\leq d-1$. It is easily seen that the combinatorial type of such a polytope only depends on $k$. Following Gr\"unbaum, we use $T^d_k$ to denote the corresponding combinatorial type. 
Given that we know the number of facets of $\widetilde{P}_{\partial (\sigma_{d+1})}$ (see \Cref{thm: complete facet discription}), we can immediately determine its combinatorial type.

\begin{theorem}\label{thm: combinatorial type}
Let $d$ be even. Then $\widetilde{P}_{\partial (\sigma_{d+1})}$ is of combinatorial type $T^d_{\frac{d}{2}}$. In particular,  $\widetilde{P}_{\partial (\sigma_{d+1})}$ is combinatorially equivalent to a $d$-dimensional cyclic polytope on $d+2$ vertices.
\end{theorem}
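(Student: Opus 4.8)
The plan is to leverage the classification of simplicial $d$-polytopes with $d+2$ vertices recalled just before the statement, together with the facet count established in \Cref{thm: complete facet discription}, and to match combinatorial types \emph{purely by counting facets}. Since the combinatorial type of such a polytope is $T^d_k$ for a unique $k$ with $1\leq k\leq d-1$, it suffices to identify this $k$; the number of facets will pin it down. First I would compute the number of facets of $T^d_k$ by a beneath–beyond analysis. Writing $T^d$ as a $d$-simplex whose facets are indexed by the vertex they omit, and letting $v$ be beyond exactly $k$ of them, the facets of $\mathrm{conv}(T^d\cup\{v\})$ are of two kinds: the $d+1-k$ facets of $T^d$ beneath which $v$ lies, which survive as facets, and the cones over the ridges of $T^d$ that separate a beyond-facet from a beneath-facet. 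A $d$-simplex has $\binom{d+1}{2}$ ridges, each contained in exactly two facets, so the number of separating ridges is $k(d+1-k)$. Hence
\begin{equation*}
 \#\mathcal{F}(T^d_k)=(d+1-k)+k(d+1-k)=(k+1)(d+1-k).
\end{equation*}

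Next I would observe that $(k+1)(d+1-k)=\frac{(d+2)^2}{4}-(k-\frac{d}{2})^2$, so as a function of $k$ this is a downward parabola attaining its maximum value $\frac{(d+2)^2}{4}$ precisely at $k=\frac{d}{2}$ (a valid index, since $1\leq \frac{d}{2}\leq d-1$ for even $d\geq 2$). By \Cref{thm: complete facet discription}, $\widetilde{P}_{\partial(\sigma_{d+1})}$ has exactly $\frac{(d+2)^2}{4}$ facets, and since $(k+1)(d+1-k)$ equals this value for no integer $k\neq\frac{d}{2}$, the combinatorial type must be $T^d_{\frac{d}{2}}$. This proves the first assertion.

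For the \emph{In particular} statement I would invoke the Upper Bound Theorem: among all simplicial $d$-polytopes with $d+2$ vertices, the cyclic polytope $C_d(d+2)$ maximizes the number of facets. As the facet count $(k+1)(d+1-k)$ is uniquely maximized at $k=\frac{d}{2}$, the cyclic polytope on $d+2$ vertices is itself of type $T^d_{\frac{d}{2}}$; consequently $\widetilde{P}_{\partial(\sigma_{d+1})}$ is combinatorially equivalent to it. Alternatively, one may simply cite Gr\"unbaum's explicit identification $C_d(d+2)\cong T^d_{\lfloor d/2\rfloor}$ from \cite[Section 6.1]{Grunbaum}.

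The only point requiring genuine care is the beneath–beyond facet count in the first step: one must verify that none of the $k$ beyond-facets survive and that exactly the separating ridges — not the ridges lying entirely within the beyond-region or entirely within the beneath-region — produce new facets. Once this is confirmed, the remainder is the elementary parabola argument, so I do not expect any serious obstacle; the entire strength of the result rests on already having the exact facet number $\frac{(d+2)^2}{4}$ in hand, which is the extremal value characterizing $T^d_{\frac{d}{2}}$.
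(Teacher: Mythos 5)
Your proposal is correct and follows essentially the same route as the paper: both arguments take the facet count $\frac{(d+2)^2}{4}$ from the facet description, match it against the number of facets of $T^d_k$, solve uniquely for $k=\frac{d}{2}$, and then identify $T^d_{d/2}$ with the cyclic polytope. The only cosmetic differences are that you re-derive the facet count $(k+1)(d+1-k)$ of $T^d_k$ from scratch via a (correct) beneath--beyond analysis, whereas the paper cites Gr\"unbaum's formula $\binom{d+2}{2}-\binom{k+1}{2}-\binom{d+1-k}{2}$ (the two expressions agree, both equaling $d+1+k(d-k)$), and that you offer an Upper Bound Theorem argument as an alternative to the paper's direct citation of Gr\"unbaum's identification $C_d(d+2)\cong T^d_{\lfloor d/2\rfloor}$.
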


\begin{proof}
By \Cref{thm: complete facet discription}, $\widetilde{P}_{\partial (\sigma_{d+1})}$ has $\frac{(d+2)^2}{4}$ facets. Using \cite[Section 6.1, Theorem 2]{Grunbaum}, it follows that this number has to be equal to 
\[
\binom{d+2}{2}-\binom{k+1}{2}-\binom{d+1-k}{2},
\]
where $\widetilde{P}_{\partial (\sigma_{d+1})}$ is of combinatorial type $T^d_k$. Solving for $k$ yields $k=\frac{d}{2}$. The second statement follows from \cite[Section 6.1, Theorem 1]{Grunbaum}.
\end{proof}

We remark that from the previous theorem, we also get a precise formula for the $f$- and $h$-vector of $\widetilde{P}_{\partial (\sigma_{d+1})}$ (see, e.g., \cite{Grunbaum}).

\begin{rem}
Given the precise description of the facets from the proof of \Cref{thm: complete facet discription}, it is not hard to write down a shelling order for $\widetilde{P}_{\partial (\sigma_{d+1})}$ ($d$ even). Namely, one particular shelling is given by
\[
F,E_2,E_4,\ldots,E_d, O_1,O_3,\ldots, O_{d-1},F_{1,2},F_{1,4},\ldots ,F_{1,d},F_{2,3},F_{2,5},\ldots,F_{2,d-1},\ldots, F_{d-1,d}.
\]
\end{rem}


\section{Regular unimodular triangulations and $h^\ast$-vectors}\label{sec:rut and h}
This section is divided into two parts. The goal of the first is to prove \Cref{thm: r.u.t of P}, namely,  that  $\widetilde{P}_{\partial (\sigma_{d+1})}$ admits a regular unimodular triangulation . As a byproduct we will also be able to compute the normalized volume $\widetilde{P}_{\partial (\sigma_{d+1})}$. In the second part, we provide the proof of \Cref{thm: unimodality real-rooted}.

\subsection{Triangulations through interior polytopes}

If $d$ is even, one of our main tools towards the formulated goal is the so-called \emph{interior polytope} $Q_{\partial (\sigma_{d+1})}$ of  $\widetilde{P}_{\partial (\sigma_{d+1})}$, defined as follows:
\[
Q_{\partial (\sigma_{d+1})}=\mathrm{conv}\left(\widetilde{P}_{\partial (\sigma_{d+1})}\setminus \partial\left(\widetilde{P}_{\partial (\sigma_{d+1})}\right) \cap\Z^d\right).
\]

\Cref{fig: Interior2} depicts $\widetilde{P}_{\partial (\sigma_3)}$ and its interior polytope $Q_{\partial (\sigma_{3})}$, both translated to the origin. 
 \begin{figure}[h]
    \centering
    \includegraphics[width=6.0cm]{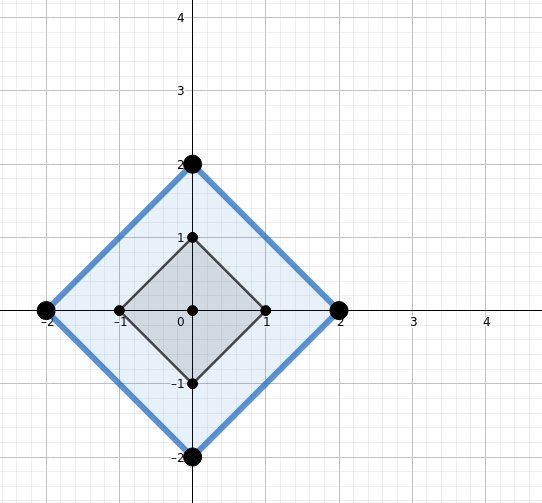}
    \caption{$\widetilde{P}_{\partial (\sigma_3)}$ and its interior polytope $Q_{\partial (\sigma_{3})}$ translated to the origin.}
    \label{fig: Interior2}
\end{figure}

Surprisingly, it turns out that $P_{\partial (\sigma_{d+1})}$ and its interior polytope are combinatorially equivalent. More precisely, the following stronger statement is true: 

\begin{theorem}\label{thm: description interior polytope}
Let $d\in\N$ be even. Then the following statements hold:
\begin{itemize}
    \item[\emph{(a)}] The complete and irredundant facet description of $Q_{\partial (\sigma_{d+1})}$ is given by:
\begin{itemize}
	\item[\emph{(i)}] $\oneb^{\intercal}\cdot x\leq d+1$,
	\item[\emph{(ii)}] $\oneb_{\mathrm{odd}}^{\intercal} \cdot x-x_i\leq \frac{d}{2}$ for even $i\in [d]$,
	\item[\emph{(iii)}] $\oneb_{\mathrm{even}}^{\intercal} \cdot x-x_j\leq \frac{d}{2}$  for odd $j\in [d] $,
	\item[\emph{(iv)}] $x_i+x_{j}\geq 1$ for $1\le i<j\leq d$ such that $i+j$ is odd.
\end{itemize}
\item[\emph{(b)}] $Q_{\partial (\sigma_{d+1})}-\oneb$ is reflexive. In particular, $\oneb$ is the unique interior lattice point of $Q_{\partial (\sigma_{d+1})}$.
\item[\emph{(c)}] $2\cdot \left(Q_{\partial (\sigma_{d+1})}-\oneb\right)=\widetilde{P}_{\partial (\sigma_{d+1})}-\oneb.$
\end{itemize}

\end{theorem}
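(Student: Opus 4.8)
The plan is to prove all three parts simultaneously by establishing the single geometric identity
\[
Q_{\partial(\sigma_{d+1})}\;=\;\tfrac12\bigl(\widetilde{P}_{\partial(\sigma_{d+1})}+\oneb\bigr),
\]
that is, to show that the interior polytope is the copy of $\widetilde{P}_{\partial(\sigma_{d+1})}$ shrunk by the factor $\tfrac12$ toward the point $\oneb$. Abbreviating $\widetilde{P}=\widetilde{P}_{\partial(\sigma_{d+1})}$ and $Q=Q_{\partial(\sigma_{d+1})}$, once this identity is in hand everything follows. Part (c) is immediate, since $Q-\oneb=\tfrac12(\widetilde P-\oneb)$. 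For (a) and (b) I would transport the complete irredundant facet description of $\widetilde P$ (\Cref{thm: complete facet discription}) through the affine bijection $x\mapsto\tfrac12(x+\oneb)$, equivalently by the substitution $x\mapsto 2x-\oneb$ in each inequality of \Cref{thm: Facet inequalities and vertices}. A direct check shows this substitution turns (i)--(iv) of \Cref{thm: Facet inequalities and vertices} into exactly (i)--(iv) of part (a): for instance $\oneb^\intercal(2x-\oneb)\le d+2$ becomes $\oneb^\intercal x\le d+1$, and $x_i+x_j\ge 0$ becomes $x_i+x_j\ge 1$ (here $\oneb^\intercal\oneb=d$ and $\oneb_{\mathrm{odd}}^\intercal\oneb=\oneb_{\mathrm{even}}^\intercal\oneb=\tfrac d2$, as $d$ is even). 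Because an affine bijection preserves the face lattice, the transported description is again complete and irredundant, which is (a); and translating by $-\oneb$ turns every right-hand side into exactly $1$, so $Q-\oneb=\{x:Ax\le\oneb\}$ for an integral matrix $A$, which is (b) by the definition of reflexivity, the remaining assertion following from the fact that $\zerob$ is the unique interior lattice point of a reflexive polytope.

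It therefore remains to prove the identity $Q=\tfrac12(\widetilde P+\oneb)$, which I would establish by two inclusions. For ``$\subseteq$'', let $x$ be an interior lattice point of $\widetilde P$. Then $x$ satisfies the facet inequalities of \Cref{thm: Facet inequalities and vertices} strictly; since $d$ is even, the bounds $d+2$, $\tfrac{d+2}2$, $0$ as well as all the left-hand sides are integers, so integrality of $x$ forces $\oneb^\intercal x\le d+1$, $\oneb_{\mathrm{odd}}^\intercal x-x_i\le\tfrac d2$, $\oneb_{\mathrm{even}}^\intercal x-x_j\le\tfrac d2$ and $x_i+x_j\ge 1$. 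These are precisely the inequalities of part (a), i.e. those defining $\tfrac12(\widetilde P+\oneb)$ via the substitution above. Hence every interior lattice point lies in $\tfrac12(\widetilde P+\oneb)$, and so $Q=\conv(\text{interior lattice points})\subseteq\tfrac12(\widetilde P+\oneb)$.

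For ``$\supseteq$'' I would argue through the vertices. Since $x\mapsto\tfrac12(x+\oneb)$ is an affine bijection and the vertices of $\widetilde P$ are the columns $b^{(1)},\dots,b^{(d+2)}$ (\Cref{thm: P simplex d odd and number of vertices}), the vertices of $\tfrac12(\widetilde P+\oneb)$ are the points $\tfrac12(b^{(\ell)}+\oneb)$. I would verify two properties of these points. First, they are lattice points: using $b^{(\ell)}_k=d+1$ for $k=\ell-2$ and $b^{(\ell)}_k=(-1)^{k+\ell-1}$ otherwise, every coordinate of $b^{(\ell)}+\oneb$ equals $d+2$ or lies in $\{0,2\}$, all of which are even because $d$ is even, so $\tfrac12(b^{(\ell)}+\oneb)\in\Z^d$. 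Second, they are interior to $\widetilde P$: a direct check of the strict facet inequalities gives $\oneb\in\operatorname{int}(\widetilde P)$ (e.g. $\oneb^\intercal\oneb=d<d+2$ and $1+1=2>0$), and since $b^{(\ell)}\in\widetilde P$, the midpoint $\tfrac12(b^{(\ell)}+\oneb)$ of an interior point and a point of $\widetilde P$ is again interior. Thus each vertex of $\tfrac12(\widetilde P+\oneb)$ is an interior lattice point of $\widetilde P$, whence $\tfrac12(\widetilde P+\oneb)=\conv\{\tfrac12(b^{(\ell)}+\oneb):\ell\in[d+2]\}\subseteq Q$.

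I expect the reverse inclusion ``$\supseteq$'', and in particular the parity bookkeeping inside it, to be the heart of the matter: it is exactly the hypothesis that $d$ is even that simultaneously makes the rounding in ``$\subseteq$'' land on the integers $\tfrac d2$ and $d+1$ and makes the shrunk vertices $\tfrac12(b^{(\ell)}+\oneb)$ lattice points. Once the identity $Q=\tfrac12(\widetilde P+\oneb)$ has been correctly guessed, the interiority/midpoint argument and the affine transport of the facet description are routine, so the real content lies in isolating this parity phenomenon and confirming that no interior lattice point of $\widetilde P$ falls outside the shrunk copy.
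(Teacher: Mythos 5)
Your proposal is correct and takes essentially the same route as the paper: the paper likewise proves all three parts by identifying $Q_{\partial(\sigma_{d+1})}$ with the half-dilation of $\widetilde{P}_{\partial(\sigma_{d+1})}$ toward $\oneb$, using the parity of $d$ to check that the halved vertices are lattice points, and transporting the facet description of \Cref{thm: complete facet discription} through the affine map to obtain reflexivity. The only cosmetic difference lies in the inclusion $Q_{\partial(\sigma_{d+1})}\subseteq\tfrac12\left(\widetilde{P}_{\partial(\sigma_{d+1})}+\oneb\right)$, where you round strict inequalities using integrality, while the paper computes the Euclidean distances between the parallel facet hyperplanes to rule out intermediate lattice points --- two equivalent verifications of the same unit gap.
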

\begin{proof} 
We let $Q=\widetilde{P}_{\partial (\sigma_{d+1})}-\oneb$. The vertices of $Q$ are given by $u^{(\ell)}\coloneqq b^{(\ell)}+\oneb$ for $1\leq \ell\leq d+2$. It is immediate that all coordinates of $u^{(\ell)}$ are divisible by $2$. Hence, $\frac{1}{2}Q$ is a lattice polytope. Using \Cref{thm: complete facet discription}, it follows that the facets of $\frac{1}{2}Q$ are given by 
\begin{itemize}
	\item $\oneb^{\intercal}\cdot x\leq 1$,
	\item $\oneb_{\mathrm{odd}}^{\intercal} \cdot x-x_i\leq 1$ for even $i\in [d]$,
	\item $\oneb_{\mathrm{even}}^{\intercal} \cdot x-x_j\leq 1$  for odd $j\in [d]$,
	\item $x_i+x_{j}\geq - 1$ for $1\le i<j\leq d$ such that $i+j$ is odd,
\end{itemize}
which shows that $\frac{1}{2}Q$ is reflexive. It remains to show that $\frac{1}{2}Q+\oneb=Q_{\partial (\sigma_{d+1})}$. Since $\frac{1}{2}Q+\oneb$ is a lattice polytope, it follows that $\frac{1}{2}Q+\oneb\subseteq Q_{\partial (\sigma_{d+1})}$. For the other inclusion it suffices to note that the facets of $\frac{1}{2}Q$ and $\widetilde{P}_{\partial (\sigma_{d+1})}$ are parallel and that they have distance $\frac{1}{\sqrt{d}}$,  $\frac{\sqrt{2}}{\sqrt{d+2}}$,  $\frac{\sqrt{2}}{\sqrt{d+2}}$ and  $\frac{1}{\sqrt{2}}$ to each other for facets of the form in (i), (ii), (iii) and (iv), respectively. This implies that there is no lattice point in $\widetilde{P}_{\partial (\sigma_{d+1})}\setminus ((\frac{1}{2}Q+\oneb)\cup \partial\widetilde{P}_{\partial (\sigma_{d+1})})$ 
 and hence $Q_{\partial (\sigma_{d+1})}\subseteq \frac{1}{2}Q+\oneb$.
\end{proof}
 We define vectors $c^{(1)},\ldots,c^{(d+2)}\in\mathbb{R}^d$ by $c^{(\ell)}_k=\frac{d+2}{2}$ if $k=\ell-2$ and $c^{(\ell)}_k=\linebreak\max(0,(-1)^{k+\ell-1})$, otherwise. Combining \Cref{thm: Facet inequalities and vertices} and \Cref{thm: description interior polytope} (c), we get the following description of the vertices of $Q_{\partial (\sigma_{d+1})}$ and its facets. 

\begin{cor}\label{cor: matrix P I and interior lattice point}
	The vertices of $Q_{\partial (\sigma_{d+1})}$ are the vectors $c^{(1)},\ldots,c^{(d+2)}$. Moreover, the vertices, that attain equality  in  \Cref{thm: description interior polytope} (i)--(iv), are given by the sets $\{c^{(\ell)}~:~3\leq \ell\leq d+2\}$, $\{c^{(\ell)}~:~\ell\in [d+2]\setminus \{1,i+2\}\}$, $\{c^{(\ell)}~:~\ell\in [d+2]\setminus \{2,j+2\}\}$ and $\{c^{(\ell)}~:~\ell\in [d+2]\setminus \{i+2,j+2\}\}$, respectively.
\end{cor}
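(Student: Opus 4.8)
The plan is to exploit part~(c) of \Cref{thm: description interior polytope}, which exhibits $Q_{\partial (\sigma_{d+1})}$ as the image of $\widetilde{P}_{\partial (\sigma_{d+1})}$ under a single affine isomorphism. Indeed, rewriting $2\cdot\left(Q_{\partial (\sigma_{d+1})}-\oneb\right)=\widetilde{P}_{\partial (\sigma_{d+1})}-\oneb$ gives $Q_{\partial (\sigma_{d+1})}=\phi\left(\widetilde{P}_{\partial (\sigma_{d+1})}\right)$, where $\phi\colon\R^d\to\R^d$ is the invertible affine map $\phi(x)=\tfrac12(x+\oneb)$, with inverse $\phi^{-1}(y)=2y-\oneb$. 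Since $\phi$ is a bijection, it induces an isomorphism of face lattices; in particular it carries vertices to vertices and preserves all vertex--facet incidences. Thus the whole corollary will follow once I (a)~check that $\phi$ maps $b^{(\ell)}$ to $c^{(\ell)}$, and (b)~match each facet inequality of \Cref{thm: Facet inequalities and vertices} with the corresponding one in \Cref{thm: description interior polytope}.

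For step~(a) I would evaluate $\phi(b^{(\ell)})$ coordinatewise. Recall $b^{(\ell)}_k=d+1$ if $k=\ell-2$ and $b^{(\ell)}_k=(-1)^{k+\ell-1}$ otherwise. In the coordinate $k=\ell-2$ one gets $\tfrac12\big((d+1)+1\big)=\tfrac{d+2}{2}$, and in every other coordinate $\tfrac12\big((-1)^{k+\ell-1}+1\big)=\max\big(0,(-1)^{k+\ell-1}\big)$; comparing with the definition of $c^{(\ell)}$ yields $\phi(b^{(\ell)})=c^{(\ell)}$ for all $1\le\ell\le d+2$. Since, by \Cref{prop: vertices P(k)} together with \Cref{thm: Facet inequalities and vertices}, the vectors $b^{(\ell)}$ are exactly the vertices of $\widetilde{P}_{\partial (\sigma_{d+1})}$, their images $c^{(\ell)}$ are exactly the vertices of $Q_{\partial (\sigma_{d+1})}$, which is the first assertion.

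For step~(b) I would transport each facet inequality of \Cref{thm: Facet inequalities and vertices} through $\phi^{-1}$, i.e.\ substitute $x=2y-\oneb$. The inequality $\oneb^{\intercal}x\le d+2$ of type~(i) becomes $\oneb^{\intercal}y\le d+1$; the inequality $\oneb_{\mathrm{odd}}^{\intercal}x-x_i\le\tfrac{d+2}{2}$ becomes, using $\oneb_{\mathrm{odd}}^{\intercal}\oneb=\tfrac{d}{2}$, exactly $\oneb_{\mathrm{odd}}^{\intercal}y-y_i\le\tfrac{d}{2}$, and symmetrically for type~(iii) with $\oneb_{\mathrm{even}}^{\intercal}\oneb=\tfrac{d}{2}$; finally $x_i+x_j\ge 0$ becomes $y_i+y_j\ge 1$. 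These are precisely the inequalities (i)--(iv) of \Cref{thm: description interior polytope}, over the same index sets, so $\phi$ maps the facet of $\widetilde{P}_{\partial (\sigma_{d+1})}$ of each type to the facet of $Q_{\partial (\sigma_{d+1})}$ of the same type. Because $\phi(b^{(\ell)})=c^{(\ell)}$ and $\phi$ preserves incidences, the set of vertices attaining equality in each facet of $Q_{\partial (\sigma_{d+1})}$ is the $\phi$-image of the corresponding set in \Cref{thm: Facet inequalities and vertices}, which is precisely the claimed set of $c^{(\ell)}$'s. The only point requiring care is the tracking of the additive constants (the identities $\oneb_{\mathrm{odd}}^{\intercal}\oneb=\oneb_{\mathrm{even}}^{\intercal}\oneb=\tfrac{d}{2}$, valid since $d$ is even); this is routine, and I do not expect any genuine obstacle, as the face-lattice isomorphism supplied by part~(c) does all of the structural work.
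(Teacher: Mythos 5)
Your proposal is correct and is essentially the paper's own argument: the corollary is stated there as a direct combination of \Cref{thm: Facet inequalities and vertices} with \Cref{thm: description interior polytope}~(c), i.e.\ transporting vertices and facet inequalities through the affine isomorphism $x\mapsto\tfrac12(x+\oneb)$, exactly as you do. Your explicit verification that $\phi(b^{(\ell)})=c^{(\ell)}$ and the bookkeeping of constants via $\oneb_{\mathrm{odd}}^{\intercal}\oneb=\oneb_{\mathrm{even}}^{\intercal}\oneb=\tfrac{d}{2}$ merely spells out the details the paper leaves implicit.
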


We now recall several definitions and facts concerning regular unimodular triangulations (see \cite[Subsection 2.3.2.]{UnimodularTriangulations} for more on these topics).

Given full-dimensional polytopes $P\subseteq \R^d$ and $P'\subseteq \R^{d'}$ of positive dimension, their \emph{join} $P\ast P'$ is the $(d+d'+1)$-dimensional polytope defined by
	\begin{equation*}
	P	\times \{\zerob_{d'}\}\times \{0\} \ \ \cup \ \ \{\zerob_{d}\}\times P'\times \{1\}.
	\end{equation*}

The next statement, which is well-known, will be crucial for the construction of a regular unimodular triangulation of $\widetilde{P}_{\partial (\sigma_{d+1})}$.
\begin{theorem}\label{thm: reg unimod triangulation of join}
Let $P\subseteq \R^d$ and $P'\subseteq \R^{d'}$ be polytopes of dimension $d$ and $d'$, respectively. Let $S=\{S_i\ :\ i\in[n]\}$ and $S'=\{S'_j\ :\ j\in[m]\}$ be triangulations of $P$ and $P'$, respectively, where $S_i$ and $S'_j$ denote the full-dimensional cells. If both $S$ and $S'$ are regular and unimodular, then 
	\begin{equation*}
		\Tc=\left\{S_i\ast S'_j~:~i\in[n], j\in [m]\right\}
	\end{equation*}
	 is a regular unimodular triangulation of $P\ast P'$. 
\end{theorem}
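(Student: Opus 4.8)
The plan is to prove this by directly constructing the height function and unimodularity witness for the join from those of the two factors. First I would recall that the join $P\ast P'$ sits in $\R^{d+d'+1}$, embedding $P$ in the slice $x_{d+d'+1}=0$ via $v\mapsto (v,\zerob_{d'},0)$ and $P'$ in the slice $x_{d+d'+1}=1$ via $w\mapsto (\zerob_d,w,1)$. A simplex $S_i\ast S_j'$ is then the convex hull of the lifted vertices of $S_i$ (at height $0$ in the last coordinate) together with the lifted vertices of $S_j'$ (at height $1$). I would first check that $\Tc$ is a triangulation: the cells $S_i\ast S_j'$ cover $P\ast P'$ because every point of the join is a convex combination $(1-t)\,p + t\,(\text{lift of }p')$ with $p\in P$, $p'\in P'$, and $p,p'$ lie in unique cells $S_i,S_j'$ of the given triangulations; intersections of two such joins are joins of the (simplicial) intersections in each factor, so they meet face-to-face. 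This is essentially the standard fact that the join of two simplicial complexes is a simplicial complex, combined with the fact that taking joins commutes with the convex-geometric join operation on the underlying point sets.

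Next I would verify unimodularity. A full-dimensional cell $S_i\ast S_j'$ has vertex set $\{(s,\zerob,0):s\in\Vc(S_i)\}\cup\{(\zerob,s',1):s'\in\Vc(S_j')\}$. Picking one vertex $(s_0,\zerob,0)$ of $S_i$ as the apex and forming the edge vectors to the remaining vertices, the resulting $(d+d'+1)\times(d+d'+1)$ matrix is block lower/upper triangular: the edges within $S_i$ give a $d\times d$ block equal to the edge matrix of $S_i$, the edges to the $S_j'$-vertices give a block whose relevant minor is the edge matrix of $S_j'$ (of size $d'\times d'$) together with a single $1$ in the last coordinate accounting for the height jump from $0$ to $1$. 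Up to sign and row operations the determinant of this block matrix factors as (normalized volume of $S_i$) times (normalized volume of $S_j'$) times $1$; since $S_i$ and $S_j'$ are unimodular both factors equal $1$, so every cell of $\Tc$ is unimodular. I would present this as a short determinant computation rather than grinding through the entries.

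Finally, for regularity, I would construct an explicit height function on $\Vc(\Tc)=\Vc(S)\times\{0\}\cup\{0\}\times\Vc(S')\times\{1\}$. Let $\omega$ and $\omega'$ be the heights realizing $S$ and $S'$ as regular triangulations of $P$ and $P'$. I would define $\Omega$ on a lifted $P$-vertex $(v,\zerob,0)$ to be $\omega(v)$ and on a lifted $P'$-vertex $(\zerob,w,1)$ to be $\omega'(w)+C$ for a large constant $C$, and then argue that the lower envelope of the lifted join projects exactly to $\Tc$. The intuition is that, for fixed last coordinate, the lower envelope restricts to the lower envelopes of the two factors, so it induces $S$ on the bottom slice and $S'$ on the top slice, and the piecewise-linearity of the lower hull forces the interpolating cells to be precisely the joins $S_i\ast S_j'$; choosing $C$ large guarantees that the two slices do not interfere and no spurious faces appear.

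The main obstacle I expect is the regularity argument: verifying cleanly that the lower envelope of the join, with heights assembled from $\omega$ and $\omega'$, projects exactly onto the join triangulation $\Tc$ rather than some coarser or incompatible subdivision. The unimodularity and the face-to-face property are essentially bookkeeping, but making the height-function argument rigorous requires checking that the convexity of the two individual regular subdivisions lifts compatibly to the join; this is where I would spend the most care, possibly by invoking the standard result that regular subdivisions are closed under the join (the join of two regular subdivisions is regular), which reduces the whole step to citing the structure theory of regular subdivisions in \cite{UnimodularTriangulations, triangulationsDeLoera}.
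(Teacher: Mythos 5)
Your proof is correct, and there is in fact nothing in the paper to compare it against: the authors invoke this statement as well-known (with \cite{UnimodularTriangulations} as background) and give no proof, so your write-up supplies exactly the standard argument being implicitly cited. All three of your steps are sound. For the face-to-face property, the clean justification is that a point of the join whose last coordinate $t$ lies in $(0,1)$ determines its two factors uniquely (divide the first $d$ coordinates by $1-t$ and the next $d'$ by $t$), whence $(S_i\ast S_j')\cap(S_k\ast S_l')=(S_i\cap S_k)\ast(S_j'\cap S_l')$ is a join of common faces, hence a common face. Your determinant computation also checks out: with apex $(s_0,\zerob,0)$, subtracting one column of the form $(-s_0,s'_0,1)$ from the remaining columns $(-s_0,s'_k,1)$ produces a block upper triangular matrix with blocks the edge matrix of $S_i$, the edge matrix of $S_j'$, and a single entry $1$, so the determinant is $\pm\det(E_i)\det(E'_j)=\pm 1$.

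One refinement on the step you flagged as delicate: the large constant $C$ is unnecessary, and the ``no interference for large $C$'' heuristic is not where the rigor actually comes from. If a lifted point of the join has last coordinate $t$, then \emph{any} representation of it as a convex combination of the lifted vertices must assign total weight $1-t$ to the $P$-vertices and total weight $t$ to the $P'$-vertices; consequently the minimization defining the lower envelope decouples, giving minimal height $(1-t)\,\mathrm{env}_{\omega}(p)+t\,\bigl(\mathrm{env}_{\omega'}(w)+C\bigr)$ for every $C$, where $\mathrm{env}$ denotes the lower-envelope function of the corresponding factor. The faces of the lower envelope of the join are therefore precisely the joins of faces of the two factor envelopes, i.e.\ the cells $S_i\ast S_j'$, for any choice of $C$ including $C=0$. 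This decoupling argument makes your regularity step rigorous with no genericity or largeness condition, and it is exactly the mechanism behind the general fact you proposed citing from \cite{triangulationsDeLoera}, that joins of regular subdivisions are regular. (A cosmetic point: in your covering argument, $p$ and $p'$ lie in unique \emph{full-dimensional} cells only when they avoid lower-dimensional faces, but the covering claim is unaffected.)
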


We will also make use of the following statement, see \cite[Theorem 4.8]{UnimodularTriangulations}.
\begin{theorem}\label{thm: unimod triang dilation}
	If $P$ has a (regular) unimodular triangulation $\Tc$, then so has any dilation $cP$, where $c$ is a positive integer.
\end{theorem}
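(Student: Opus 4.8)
The plan is to prove \Cref{thm: unimod triang dilation} by reduction to the join construction of \Cref{thm: reg unimod triangulation of join} combined with a standard observation about dilations. First I would recall the well-known fact that the $c$-fold dilation $cP$ can be realized as a lattice projection (or, more precisely, as a face/slice) of a suitable join of $c$ copies of $P$. Concretely, for the simplex case one uses that $c\Delta_m$ is the image of the join $\Delta_m^{\ast c}$ (the $c$-fold join of the simplex with itself) under the summing map $(x_1,\ldots,x_c)\mapsto x_1+\cdots+x_c$, and this map restricts to a unimodular map on the appropriate cells. Thus if $P$ has a regular unimodular triangulation $\Tc$, each full-dimensional cell of $\Tc$ is a unimodular simplex, and it suffices to triangulate $cP$ cell by cell.

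The key steps, in order, would be: (1) reduce to the case where $P$ itself is a unimodular simplex, by triangulating $cP$ compatibly with the induced subdivision coming from $\Tc$ — since $\Tc$ is regular, the dilated subdivision $c\Tc = \{cS_i\}$ is again a regular (non-unimodular) subdivision of $cP$, so it is enough to produce a regular unimodular triangulation of each dilated simplex $cS_i$ that agrees on common faces. (2) For a fixed unimodular simplex $S \cong \Delta_m$, construct a regular unimodular triangulation of $c\Delta_m$. This is the classical \emph{edgewise} (or \emph{hypersimplicial}) subdivision of the dilated simplex; one standard route is to view $c\Delta_m$ via the join machinery of \Cref{thm: reg unimod triangulation of join} by building up from the unimodular triangulation of the segment $c\Delta_1 = [0,c]$, which is trivially regular and unimodular, and iterating. (3) Check that the local triangulations on the individual $cS_i$ are induced by a single global height function, so that the resulting triangulation of $cP$ is itself regular; regularity of $\Tc$ together with a suitably scaled and perturbed lifting of the edgewise heights gives this.

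The main obstacle I expect is step (3): guaranteeing \emph{global regularity} and \emph{compatibility} of the cell-by-cell triangulations across shared faces. Triangulating each dilated simplex $cS_i$ by the edgewise subdivision is canonical and depends only on the lattice structure, so the pieces automatically match on overlaps (the edgewise subdivision of a face of $cS_i$ is the face of the edgewise subdivision); the genuinely delicate point is to exhibit one height function on $c P$ that simultaneously induces the original regular subdivision $c\Tc$ on the coarse scale and the edgewise refinement within each cell. The standard trick is to take a height function $\omega$ inducing $\Tc$, scale it by a large factor, and add a small perturbation realizing the edgewise heights within cells; one must verify the perturbation is small enough not to disturb the coarse regular structure. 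I would cite the existence of the edgewise subdivision and its regularity/unimodularity (this is precisely the content referenced by \cite[Theorem 4.8]{UnimodularTriangulations}), so in the paper itself this result can simply be invoked rather than reproved, as the authors do.

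Since the statement is explicitly quoted from \cite[Theorem 4.8]{UnimodularTriangulations}, in the present paper no independent proof is needed: I would state it as a cited theorem and move on, reserving the actual triangulation work for the concrete polytope $\widetilde{P}_{\partial(\sigma_{d+1})}$, where \Cref{thm: reg unimod triangulation of join} and \Cref{thm: description interior polytope}(c) combine with \Cref{thm: unimod triang dilation} to lift a triangulation of the reflexive interior polytope $\frac{1}{2}Q$ to its second dilation $Q = \widetilde{P}_{\partial(\sigma_{d+1})}-\oneb$.
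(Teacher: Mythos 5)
Your bottom line matches the paper exactly: the paper offers no proof of this statement, quoting it verbatim from \cite[Theorem 4.8]{UnimodularTriangulations}, which is precisely what you conclude should be done. In that sense your proposal is correct and takes the same route as the paper.

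Since you nevertheless sketched an argument, two points in it would need repair if the proof were actually written out. First, your claim in step (3) that the edgewise subdivision of a dilated cell ``is canonical and depends only on the lattice structure'' is false in dimension at least $3$: the edgewise subdivision of $c\Delta_m$ depends on the chosen ordering of the vertices of the simplex (already in $2\Delta_3$ the central octahedron must be cut along one of its diagonals, and different vertex orderings select different diagonals), so agreement of the refinements of $cS_i$ and $cS_j$ on a shared face is not automatic. The standard fix is to fix one global linear order on the vertex set of $\Tc$ and use that the edgewise subdivision restricted to a face equals the edgewise subdivision of that face with the induced order; with that amendment your gluing step, and the scaled-height-plus-small-perturbation argument for global regularity, is exactly the argument in the cited source. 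Second, your step (2) cannot be carried out via \Cref{thm: reg unimod triangulation of join}: the join of polytopes of dimensions $d$ and $d'$ has dimension $d+d'+1$, so iterated joins of the segment $[0,c]$ triangulate ever higher-dimensional joins, never the dilation $c\Delta_m$; the summing map you invoke realizes $c\Delta_m$ as a projection of the \emph{product} $\Delta_m\times\cdots\times\Delta_m$ (whose staircase triangulations project to the edgewise subdivision), not of the join. Neither slip affects the correctness of your actual proposal, since you ultimately cite the theorem rather than rely on these constructions.
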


 A well-studied subdivision, which is related to the Veronese construction in algebra but also appears in topology \cite{BRENTI2009545,Edelsbrunner,Grayson,Roemer}, is the so-called \emph{$r$\textsuperscript{th} edgewise subdivision} of a simplicial complex. In the following, we review this definition for the special case that $\Delta$ is the $(n-1)$-dimensional simplex on vertex set $V=\{\mathbf{e}_1,\mathbf{e}_2,\ldots,\mathbf{e}_n\}\subseteq \R^n$. For a positive integer $r$, let $\Omega_r=\{(i_1,\ldots,i_n)\in\N^n~:~i_1+i_2+\cdots+i_n=r\}$ denote the set of lattice points $r\Delta\cap \mathbb{Z}^n$. For $x=(x_1,\ldots,x_n)\in\Z^n$, we define
 \[
 \phi(x)\coloneqq (x_1,x_1+x_2,\ldots,x_1+\cdots +x_n)\in \R^n.
 \]
 The \emph{$r$\textsuperscript{th} edgewise subdivision} of $\Delta$ is the simplicial complex $\mathrm{esd}_r(\Delta)$ on vertex set $\Omega_r$, for which $F\subseteq\Omega_r$ is a face if for all $x,y\in F$
 \[
 \phi(x)-\phi(y)\in\{0,1\}^n \qquad \mbox{or} \qquad \phi(y)-\phi(x)\in\{0,1\}^n.
 \]
 By definition, the geometric realization of the $r$\textsuperscript{th} edgewise subdivision of $\Delta$ gives a lattice triangu\-la\-tion of $r\Delta$. It is known that this triangulation is regular \cite[Proposition 6.4.]{Roemer}, which is also unimodular since all maximal simplices have normalized volume $1$. In the following, we will use $\mathrm{esd}_r(\Delta)$ to denote both, the triangulation as a simplicial complex and its geometric realization.
 Given any $(n-1)$-dimensional unimodular simplex $\Gamma\subseteq \mathbb{R}^n$, $\mathrm{esd}_r(\Delta)$, naturally induces a regular unimodular triangulation of $r\Gamma$ (by applying the corresponding unimodular transformation). Slightly abusing notation, we will refer to this triangulation as edgewise subdivision of $\Gamma$ or even of $r\Gamma$, denoted $\mathrm{esd}_r(\Gamma)$. Moreover, the restriction of $\mathrm{esd}_r(\Gamma)$ to any face $F\in\Gamma$ equals $\mathrm{esd}_r(F)$ as a simplicial complex and as geometric realization.
 \begin{exa}
\Cref{fig: Bsp_esd} depicts the $3$\textsuperscript{rd} edgewise subdivision of the $2$-dimensional simplex $\Delta_2\coloneqq\conv((0,0),(1,0),(0,1))$ as triangulation of $3\Delta_2$. The vertex labels correspond to the vertex labels from the original definition and not the lattice points. 
\begin{figure}[h]
    \centering
    \includegraphics[width=6.0cm]{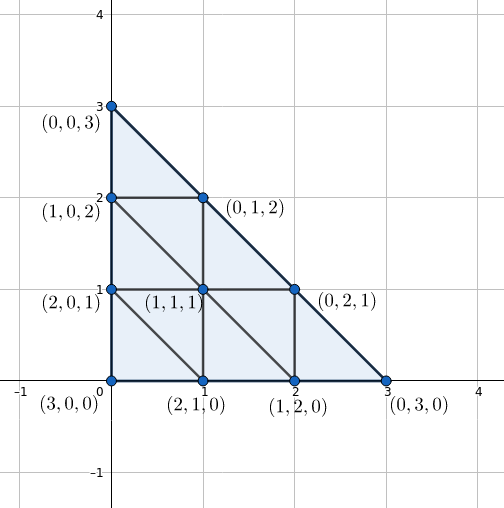}
    \caption{The triangulation of $3\cdot\Delta_2$ given by  $\mathrm{esd}_3(\Delta_2)$.}
    \label{fig: Bsp_esd}
\end{figure}
\end{exa}
We now outline our strategy to show that $\Ptd$ has a regular unimodular triangulation. 
We first prove that $\Ptd$ and the facets of $Q_{\partial (\sigma_{d+1})}$, if $d$ is odd and even, respectively, are unimodular equivalent to joins of dilated standard simplices. If $d$ is odd and even, we can hence  triangulate $\Ptd$ and facets of $Q_{\partial (\sigma_{d+1})}$ as join of edgewise subdivisions. If $d$ is odd, the claim follows by \Cref{thm: reg unimod triangulation of join}. If $d$ is even, we next show that these triangulations are consistent on intersections of facets. By coning with $\oneb$, we get a unimodular triangulation of $Q_{\partial (\sigma_{d+1})}$ (see \Cref{thm: description interior polytope} (d)) and hence of $\Ptd$ by \Cref{thm: unimod triang dilation}. The regularity follows by using that the triangulation is regular on single facets and that each facet is triangulated in the same way. 

The next statement yields the first step in the outlined strategy. 
\begin{prop}\label{prop: facets as join}
    \begin{itemize}
        \item[(a)] Let $d\geq 2$ be even and $F\in\Fc\left(Q_{\partial (\sigma_{d+1})}\right)$. Then 
        $$
        F\cong \left(\frac{d+2}{2}\Delta_{\frac{d-2}{2}}-\oneb_{\frac{d-2}{2}}\right)\ast \left(\frac{d+2}{2}\Delta_{\frac{d-2}{2}}-\oneb_{\frac{d-2}{2}}\right).
        $$
        \item[(b)] Let $d\geq 1$ be an odd integer. Then
        \begin{equation*}
	       \Ptd\cong \big((d+2)\Delta_{\frac{d+1}{2}}-2\cdot\oneb\big)\ast \big((d+2)\Delta_{\frac{d-1}{2}}-2\cdot\oneb\big).
        \end{equation*}
    \end{itemize} 
\end{prop}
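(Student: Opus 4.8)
The plan is to read off the vertex coordinates (from \Cref{cor: matrix P I and interior lattice point} for~(a) and from \Cref{thm: Lap Mat Thm Boundary} for~(b)), to split the vertices into two groups occupying complementary coordinate blocks, and then to exhibit an explicit unimodular coordinate change carrying the polytope onto a standard join. Throughout I use two elementary facts about joins. First, a dilated standard simplex in its ``hyperplane form'' is unimodular equivalent to the usual one, $\conv\{c\mathbf e_{1},\dots,c\mathbf e_{p+1}\}\cong c\Delta_{p}$, by dropping the last coordinate. Second, translating or reflecting the two factors of a join by integer vectors yields a unimodular equivalent join; indeed a translation $a$ of the first factor is undone by the shear $(x,y,t)\mapsto(x+ta-a,\,y,\,t)$, which is affine unimodular and fixes the join parameter $t$, and symmetrically for the second factor. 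Consequently the translations $-\oneb$ and $-2\cdot\oneb$ in the statement may be ignored, and it suffices to identify $F$, respectively $\Ptd$, with the join of two dilated \emph{standard} simplices.

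For (a) fix a facet $F$ and record, using \Cref{cor: matrix P I and interior lattice point}, that for every \emph{odd} index $\ell$ the vertex $c^{(\ell)}$ has all even coordinates equal to $1$ and, on the odd coordinates, equals $\tfrac{d+2}{2}\mathbf e_{\ell-2}$ (read as the zero vector when $\ell=1$); symmetrically, every even-indexed vertex has all odd coordinates equal to $1$ and equals $\tfrac{d+2}{2}\mathbf e_{\ell-2}$ on the even coordinates. Hence the odd-indexed vertices of $F$ share the same even-coordinate block and, read in the odd coordinates, form the vertices of a copy of $\tfrac{d+2}{2}\Delta_{\frac{d-2}{2}}$, and the even-indexed vertices behave symmetrically. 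Since a facet omits exactly two of the $d+2$ vertices, each group has $\tfrac d2$ vertices, the dimensions $\tfrac{d-2}{2}+\tfrac{d-2}{2}+1=d-1$ add up, and a short computation shows the two affine hulls are disjoint, so $F$ is the join of the two groups. To upgrade this to a unimodular equivalence I apply the coordinate map that keeps the coordinates of one block, all but one coordinate of the other, and one further \emph{join parameter} functional: this is a surviving coordinate $x_{j_{0}}$ when the apex $0$ of the corresponding factor is present, and a block-sum $\sum_{i\ \mathrm{even}}x_{i}$ when it is absent. In either case the functional is $0$ on one group and $1$ on the other, is primitive, and completes the selected coordinates to a unimodular map on $\mathrm{aff}(F)$ (its matrix is block-triangular and reduces to a small block of determinant $\pm1$). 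Pushing $F$ through this map yields the standard join $\big(\tfrac{d+2}{2}\Delta_{\frac{d-2}{2}}\big)\ast\big(\tfrac{d+2}{2}\Delta_{\frac{d-2}{2}}\big)$ up to translation of its two factors, proving~(a).

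Part (b) is the same argument for the single polytope $\Ptd=\conv(\Lc_{d}(\{1\}))$, and needs no case distinction. First I apply the unimodular sign change $x\mapsto\big((-1)^{r}x_{r}\big)_{r}$ on $\R^{d+1}$; by \Cref{thm: Lap Mat Thm Boundary} it turns the columns of $\Lc_{d}(\{1\})$ into $(-1)^{\ell}\big((d+2)\mathbf e_{\ell}-\oneb\big)$, namely the apex $\oneb$ for $\ell=1$, the points $(d+2)\mathbf e_{\ell}-\oneb$ for even $\ell$, and $\oneb-(d+2)\mathbf e_{\ell}$ for odd $\ell\ge 3$. Splitting by the parity of $\ell$ gives an odd group of size $\tfrac{d+3}{2}$ and an even group of size $\tfrac{d+1}{2}$; read in the odd-row and even-row coordinate blocks these are, up to reflection and translation, $(d+2)\Delta_{\frac{d+1}{2}}$ and $(d+2)\Delta_{\frac{d-1}{2}}$, each constant on the complementary block. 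Taking all odd-row coordinates, all but one even-row coordinate, and the join parameter $\sum_{r\ \mathrm{even}}x_{r}-\tfrac{d+1}{2}$ (which is $0$ on the odd group and $1$ on the even group) gives a unimodular map onto the standard join, as required.

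The main obstacle is the bookkeeping in~(a): one must verify that the chosen coordinate selection is genuinely unimodular and that the join parameter separates the two groups, while the two admissible presentations of each factor force a switch in its choice. Concretely, when the apex survives the factor appears in origin-anchored form $\tfrac{d+2}{2}\Delta_{\frac{d-2}{2}}=\conv\big(\{0\}\cup\{\tfrac{d+2}{2}\mathbf e_{j}\}\big)$ and the join parameter is a coordinate, whereas when the apex is omitted it appears in hyperplane form $\conv\{\tfrac{d+2}{2}\mathbf e_{j}\}$ and the join parameter must be a block-sum. Once the two elementary join lemmas are in place, each of the four facet types of $Q_{\partial (\sigma_{d+1})}$ is settled by one such explicit map, and the result follows uniformly.
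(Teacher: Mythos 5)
Your proposal is correct and uses the same underlying decomposition as the paper: split the vertices of $F$ (resp.\ of $\Ptd$) by the parity of their index, observe that after translation the two groups occupy complementary coordinate blocks in which each group is a dilated standard simplex (origin-anchored if its apex $c^{(1)}$ resp.\ $c^{(2)}$ survives, in hyperplane form $\conv\{c\mathbf{e}_1,\dots,c\mathbf{e}_{p+1}\}$ otherwise), and then identify the polytope with a join of the two factors. Where you genuinely differ is in the verification. The paper proves the unimodular equivalence by brute force: for each of the four facet classes (three relegated to the appendix) and for the odd-$d$ case it writes an explicitly reordered vertex matrix $S$ and an explicit unimodular $U$ with $U\cdot(S-\oneb)$ in block-join form, then projects away the last coordinate(s). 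You replace this case-by-case matrix bookkeeping by two reusable lemmas (translations/reflections of a join factor are absorbed by an integral shear; the hyperplane form compresses to $c\Delta_p$ by dropping a coordinate) together with a uniform recipe for the join parameter: the coordinate indexed by the omitted vertex's big entry when the relevant apex survives, and a block-sum when it does not. This buys a single argument covering all four facet types at once; what it costs is that your unimodularity claim (``block-triangular, small block of determinant $\pm1$'') is asserted rather than computed --- though it does complete: from the selected coordinates and the join parameter, the two dropped coordinates are recovered integrally using the parameter and the facet's defining equation (e.g., $\oneb^{\intercal}x=d+1$ after translation), so the selection map is a lattice bijection on $\mathrm{aff}(F)\cap\Z^d$, and your grouping records exactly which vertices land in which factor, which is the same auxiliary data the paper extracts for later use in the proof of Theorem~A. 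Two cosmetic slips in (b): the big entry of the $\ell$-th column of $\Lc_d(\{1\})$ sits in row $\ell-1$, not $\ell$ (your $\mathbf{e}_{d+2}$ is out of range in $\R^{d+1}$), and with the sign change $(-1)^r$ as written you obtain the negatives of the points you list (use $(-1)^{r+1}$, or compose with $-\id$); after this relabeling the parities in your join parameter swap accordingly and the argument is otherwise unaffected.
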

\begin{proof}
The proof of (a) is divided into four cases, according to the four classes of facets from \Cref{thm: description interior polytope} (a).

Let $F=\{x\in \R^{d}~:~\oneb^{\intercal}\cdot x\leq d+1\}$. By \Cref{cor: matrix P I and interior lattice point}, the vertices of $F$ are $c^{(3)},\ldots,c^{(d+2)}$. We now consider the matrix $A$, whose $\ell$-th column equals $c^{(2\ell+1)}$ if $1\leq \ell\leq \frac{d}{2}$ and $c^{(2\ell+2-d)}$ if $\frac{d}{2}+1\leq \ell\leq d$. If we reorder the rows of $A$, by taking first the rows with odd index and then the ones with even index, increasingly, we obtain a matrix $S$, which looks as follows: \[
S=\left(\begin{array}{c|c}
	\frac{d+2}{2}\cdot E_{\frac{d}{2}}& \oneb_{\frac{d}{2}\times\frac{d}{2}}\\ \hline
	 \oneb_{\frac{d}{2}\times\frac{d}{2}}&\frac{d+2}{2}\cdot E_{\frac{d}{2}}
\end{array}\right),
\]
where $ \oneb_{k\times k}$ denotes the $(k\times k)$-matrix with all entries equal to $1$. 
Clearly, $F\cong \conv(S)$. Let  $E'_k\in\Z^{(k-1)\times k}$ be the $(k\times k)$-identity matrix with its first row removed and let
\begin{equation*}
U=\left(\begin{array}{ccc|ccc}
	&E'_{\frac{d}{2}} & & &\zerob_{\frac{d-2}{2}\times\frac{d}{2}} &  \\ \hline
	& \zerob_{\frac{d-2}{2}\times\frac{d}{2}} & & &E'_{\frac{d}{2}} &  \\ \hline
	0&\cdots & 0& 1& \cdots&  1\\
	1& \cdots &1 &1  &\cdots  &  1\\
\end{array}\right)\in\Z^{d\times d}.
\end{equation*}
It is easily seen that $U$ is unimodular and a direct computation shows that
\begin{equation*}
U\cdot (S-\oneb_{d\times d})=\left(\begin{array}{ccc|ccc}
	& \Mcc\left(\frac{d+2}{2}\Delta_{\frac{d-2}{2}}-\oneb_{\frac{d-2}{2}\times\frac{d}{2}} \right)& & &\zerob_{\frac{d-2}{2}\times\frac{d}{2}}  &	\\ \hline
	&\zerob_{\frac{d-2}{2}\times\frac{d}{2}}  & & & \Mcc\left(\frac{d+2}{2}\Delta_{\frac{d-2}{2}}-\oneb_{\frac{d-2}{2}\times\frac{d}{2}} \right)&	\\ \hline
	0& \cdots&0 &1 & \cdots&1	\\ 
		1& \cdots &1 &1  &\cdots  &  1\\
\end{array}\right),
\end{equation*}
where $\zerob_{k\times k}$ denotes the $(k\times k)$-matrix with all entries equal to $0$ and  $\Mcc\left(\frac{d+2}{2}\Delta_{\frac{d-2}{2}}-\oneb_{\frac{d-2}{2}\times\frac{d}{2}} \right) $ 
 denotes the matrix whose columns are the vertices of $\frac{d+2}{2}\Delta_{\frac{d-2}{2}}-\oneb_{\frac{d-2}{2}\times\frac{d}{2}}$ in the obvious order. Since $F\cong \conv( U \cdot (S-\oneb_{d\times d}))$, the claim follows after projection on the first $d-1$ coordinates and by the definition of the join. We also note that the vertices of $F$  corresponding to the vertices of the dilated simplices are $\{c^{(2\ell+1)}~\colon~1\leq \ell \leq \frac{d}{2}\}$ and $\{c^{(2\ell)}~\colon~2\leq \ell \leq \frac{d}{2}+1\}$.

 Similarly, one can show that for the facets defined by
 \begin{itemize}
 \item $\oneb_{\mathrm{odd}}^{\intercal} \cdot x-x_i\leq \frac{d}{2}$, where $i\in [d]$ is even, 
\item $\oneb_{\mathrm{even}}^{\intercal} \cdot x-x_j\leq \frac{d}{2}$, where $j\in [d] $ is odd,
\item $x_i+x_{j}\geq 1$ for $1\le i<j\leq d$ such that $i+j$ is odd,
\end{itemize}
respectively, the vertices 
\begin{itemize}
 \item $\{c^{(2\ell+1)}~\colon~1\leq \ell \leq \frac{d}{2}\}$  and $\{c^{(2\ell)}~\colon~1\leq \ell \leq \frac{d}{2}+1, \ell\neq \frac{i+2}{2}\}$,
\item $\{c^{(2\ell+1)}~\colon~0\leq \ell \leq \frac{d}{2}, \ell\neq \frac{j+1}{2}\}$  and $\{c^{(2\ell)}~\colon~2\leq \ell \leq \frac{d}{2}+1, \ell\neq \frac{i+2}{2}\}$,
 \item $\{c^{(2\ell+1)}~\colon~0\leq \ell \leq \frac{d}{2}, \ell\neq \frac{j+1}{2}\}$  and $\{c^{(2\ell)}~\colon~1\leq \ell \leq \frac{d}{2}+1, \ell\neq \frac{i+2}{2}\}$,
 \end{itemize}
 respectively, correspond to the vertices of the dilated simplices. The rather technical proofs can be found in the appendix.
 Similarly, (b) will be shown in the appendix. 
\end{proof}

We recall and prove \Cref{thm: r.u.t of P}.
\TheoremA*

\begin{proof}
Since $\Ptd$ and $P_{\partial(\sigma_{d+1})}$ are unimodular equivalent, it suffices to show the statement for $\Ptd$. First assume that $d$ is odd. 
   By \Cref{prop: facets as join} (b), we know that 
    \[
    \Ptd\cong \big((d+2)\Delta_{\frac{d+1}{2}}-2\cdot\oneb\big)\ast \big((d+2)\Delta_{\frac{d-1}{2}}-2\cdot\oneb\big).
    \]
 Since the $(d+2)$\textsuperscript{nd} edgewise subdivision is a  regular unimodular triangulation of the $(d+2)$\textsuperscript{nd} dilation of any unimodular simplex (as well as of any translation), we conclude with \Cref{thm: reg unimod triangulation of join} that $\Ptd$ has a regular unimodular triangulation.

Next assume that $d$ is even.  If $d=0$, $\Ptd$ is just a point and there is nothing to show. 

    Let $d\geq 2$. We construct a regular unimodular triangulation of the interior polytope $Q_{\partial (\sigma_{d+1})}$. By \Cref{prop: facets as join} every facet $F$ of  $Q_{\partial (\sigma_{d+1})}$ is unimodular equivalent to 
    \begin{equation}\label{eqn: join facets}
        \left(\frac{d+2}{2}\Delta_{\frac{d-2}{2}}-\oneb_{\frac{d-2}{2}}\right)\ast \left(\frac{d+2}{2}\Delta_{\frac{d-2}{2}}-\oneb_{\frac{d-2}{2}}\right).
    \end{equation}
    By the same reasoning as for $d$ odd, we can triangulate $F$ as join of edgewise subdivisions of unimodular simplices. In this way, we obtain regular unimodular triangulations of each facet of  $Q_{\partial (\sigma_{d+1})}$. We now show that the union of these triangulations, yields a triangulation of the boundary of $Q_{\partial (\sigma_{d+1})}$. For this aim, let $F$ and $G$ be facets of  $Q_{\partial (\sigma_{d+1})}$ and let $\mathcal{T}(F)$ and $\mathcal{T}(G)$ be the considered triangulations. Let us further denote by $F_i$ and $G_i$, where $i\in [2]$, the vertex sets corresponding to the vertex sets of the dilated (and translated) simplices in \eqref{eqn: join facets}. It follows from the end of the proof of \Cref{prop: facets as join} that (after possible renumbering)
    \[
    (F_1\cup F_2)\cap (G_1\cup G_2)=(F_1\cap F_2)\cup (G_1\cap G_2).
    \]
This directly yields that the restrictions of $\mathcal{T}(F)$ and $\mathcal{T}(G)$ to $F\cap G$ coincide: Indeed, they are given as the join of the edgewise subdivisions of the dilated (and translated) simplices on vertex sets $F_1\cap F_2$ and $G_1\cap G_2$. This shows that the union of the triangulations of the facets is indeed a triangulation of the boundary of $Q_{\partial (\sigma_{d+1})}$, which is, in particular, unimodular. Since, by \Cref{thm: description interior polytope} (b), $Q_{\partial (\sigma_{d+1})}-\oneb$ is reflexive, we can extend this triangulation to a unimodular triangulation of $Q_{\partial (\sigma_{d+1})}$ by coning over the unique interior lattice point $\oneb$. In the following, we call this triangulation $\mathcal{T}$.

It remains to show that $\mathcal{T}$ is a regular triangulation. The previous paragraph implies that the induced triangulations on facets $Q_{\partial (\sigma_{d+1})}$ are all regular and unimodular equivalent to each other. In particular, there exists a simultaneous lifting function $h$ yielding the triangulation of an arbitrary facet. Fix a facet $F$ and let $\mathcal{T}(F)$ be the induced triangulation on $F$. Since $F$ is a simplex, we can assume that $h(v)=1$ for any vertex $v\in F$. Moreover, for any lattice point $u$ in $F$, that is not a vertex, we have $h(u)<1$, since otherwise $u$ would not be  a vertex of $\mathcal{T}(F)$. Hence, there exists a non-negative function $g$, whose values are bounded by $1$, that vanishes on the vertices of $F$ such that $h=1-g$. Moreover, for any $\epsilon>0$, $h_\epsilon=1-\epsilon g$ is also a lifting function for $F$ yielding $\mathcal{T}(F)$. Finally, ignoring $\oneb$ and lifting all other lattice points in $Q_{\partial (\sigma_{d+1})}$ according to the simultaneous lifting function $h_\epsilon$, gives a lifting function such that the projection of the lower envelope yields $\mathcal{T}$ on the boundary of $Q_{\partial (\sigma_{d+1})}$ and potentially additional faces in the interior. Lifting $\oneb$ at height $0$, gives a lifting of all lattice points of $Q_{\partial (\sigma_{d+1})}$. If $\epsilon$ is sufficiently small, one can guarantee that the triangulation obtained as the lower envelope is the cone with $\oneb$ over the boundary of the previous triangulation (ignoring $\oneb$) since potential interior faces that we had seen before, do no longer  lie in the lower envelope. 

The claim follows by \Cref{thm: description interior polytope} (c) and \Cref{thm: unimod triang dilation}.
\end{proof}
Analyzing the proof of \Cref{thm: r.u.t of P}, we can compute the normalized volume of $P_{\partial(\sigma_{d+1})}$:
\begin{cor}\label{cor: normalized volume}
The normalized volume of $P_{\partial(\sigma_{d+1})}$ is $(d+2)^d$. 
\end{cor}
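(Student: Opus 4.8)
The plan is to read off the normalized volume directly from the triangulations constructed in the proof of \Cref{thm: r.u.t of P}, using the standard fact that whenever a $k$-dimensional lattice polytope admits a unimodular triangulation, its normalized volume equals the number of maximal ($k$-dimensional) cells of that triangulation. Two elementary facts will drive the computation. First, $\nvol(r\Delta_k)=r^k$ for every positive integer $r$, and normalized volume is invariant under translation and unimodular equivalence; in particular $\nvol(P_{\partial(\sigma_{d+1})})=\nvol(\Ptd)$ by \Cref{lem: unimodular equivalence}. Second, normalized volume is multiplicative under joins: if $P$ and $P'$ carry unimodular triangulations with $n$ and $m$ maximal cells, then by \Cref{thm: reg unimod triangulation of join} the join $P\ast P'$ carries a unimodular triangulation with exactly $nm$ maximal cells (one cell $S_i\ast S'_j$ per pair), so $\nvol(P\ast P')=\nvol(P)\cdot\nvol(P')$.

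For $d$ odd this settles the claim at once. By \Cref{prop: facets as join}(b), $\Ptd$ is unimodular equivalent to the join of (translates of) $(d+2)\Delta_{(d+1)/2}$ and $(d+2)\Delta_{(d-1)/2}$, so multiplicativity gives
\[
\nvol(\Ptd)=(d+2)^{(d+1)/2}\cdot (d+2)^{(d-1)/2}=(d+2)^d.
\]

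For $d$ even I would first reduce to the interior polytope. By \Cref{thm: description interior polytope}(c), $\Ptd-\oneb=2\big(Q_{\partial (\sigma_{d+1})}-\oneb\big)$, and since both are $d$-dimensional (\Cref{thm: dimension}), scaling gives $\nvol(\Ptd)=2^d\,\nvol(Q_{\partial (\sigma_{d+1})})$. To compute $\nvol(Q_{\partial (\sigma_{d+1})})$ I use the triangulation $\mathcal{T}$ from the proof of \Cref{thm: r.u.t of P}: it is the cone with apex $\oneb$ over a unimodular triangulation of $\partial Q_{\partial (\sigma_{d+1})}$, so its maximal $d$-cells are in bijection with the maximal $(d-1)$-cells of the boundary triangulation, whence $\nvol(Q_{\partial (\sigma_{d+1})})=\sum_{F\in\Fc(Q_{\partial (\sigma_{d+1})})}\nvol(F)$. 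By \Cref{prop: facets as join}(a) every facet is a join of two copies of $\tfrac{d+2}{2}\Delta_{(d-2)/2}$, so $\nvol(F)=\big(\tfrac{d+2}{2}\big)^{d-2}$ by multiplicativity, and there are $\tfrac{(d+2)^2}{4}$ facets by \Cref{thm: complete facet discription} together with the combinatorial equivalence of $Q_{\partial (\sigma_{d+1})}$ and $\Ptd$ recorded in \Cref{thm: description interior polytope}. Hence
\[
\nvol(Q_{\partial (\sigma_{d+1})})=\frac{(d+2)^2}{4}\cdot\Big(\frac{d+2}{2}\Big)^{d-2}=\frac{(d+2)^d}{2^d},
\]
and multiplying by $2^d$ yields $\nvol(\Ptd)=(d+2)^d$.

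The conceptual content lies entirely in \Cref{thm: r.u.t of P} and its ingredients; what remains here is bookkeeping. The one step needing a little care is the cone count in the even case: one must verify that coning $\oneb$ over the boundary triangulation introduces no additional vertices and produces exactly one unimodular $d$-cell per boundary $(d-1)$-cell, which is precisely what unimodularity together with the reflexivity of $Q_{\partial (\sigma_{d+1})}-\oneb$ (\Cref{thm: description interior polytope}(b)) guarantees. A minor point is to note that the facet count of \Cref{thm: complete facet discription}, stated for $\Ptd$, transfers to $Q_{\partial (\sigma_{d+1})}$ via their combinatorial equivalence.
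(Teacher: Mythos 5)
Your proposal is correct and takes essentially the same route as the paper: the paper likewise computes $\nvol(\Ptd)$ by counting maximal cells of the triangulation from \Cref{thm: r.u.t of P}, using in the odd case the join of edgewise subdivisions from \Cref{prop: facets as join}(b) (your $\nvol((d+2)\Delta_m)=(d+2)^m$ is just the cell count of $\mathrm{esd}_{d+2}$), and in the even case summing $\left(\frac{d+2}{2}\right)^{d-2}$ over the $\frac{(d+2)^2}{4}$ facets of $Q_{\partial (\sigma_{d+1})}$ and then applying \Cref{thm: description interior polytope}(c). Your explicit framing via multiplicativity of $\nvol$ under joins and the cone-over-the-boundary count is only a repackaging of the paper's bookkeeping, and your care about unimodularity of the cone cells matches what the proof of \Cref{thm: r.u.t of P} already establishes.
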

\begin{proof}
We compute the normalized volume of $\Ptd$, which equals the one of  $P_{\partial(\sigma_{d+1})}$, by counting the number of maximal simplices in the unimodular triangulation $\mathcal{T}$ constructed in the proof of \Cref{thm: r.u.t of P}. 

First assume that $d$ is odd. We have seen that $\mathcal{T}$ is unimodular equivalent to 
\[
\mathrm{esd}_{d+2}\left(\Delta_{\frac{d+1}{2}}\right)\ast \mathrm{esd}_{d+2}\left(\Delta_{\frac{d-1}{2}}\right).
\]
Since the $r$\textsuperscript{th} edgewise subdivision of an $m$-simplex, has $r^m$ maximal simplices, it follows that the number of maximal simplices in the constructed unimodular triangulation of $\Ptd$ equals
\[
(d+2)^{\frac{d-1}{2}}\cdot (d+2)^{\frac{d+1}{2}}=(d+2)^d.
\]

Let $d$ be even. We first compute the normalized volume of $Q_{\partial (\sigma_{d+1})}$. Combining \Cref{thm: complete facet discription} and 
\Cref{thm: description interior polytope}, it follows that $Q_{\partial (\sigma_{d+1})}$ has exactly $\frac{(d+2)^2}{4}$ facets. By the proof of  \Cref{thm: r.u.t of P} each of these has a unimodular triangulation that is unimodular equivalent to 
\[
\mathrm{esd}_{\frac{d+2}{2}}\left(\Delta_{\frac{d-2}{2}}\right)\ast \mathrm{esd}_{\frac{d+2}{2}}\left(\Delta_{\frac{d-2}{2}}\right).
\]
As in the case that $d$ is odd, we conclude that each facet is triangulated into $\left(\frac{d+2}{2}\right)^{\frac{d-2}{2}}\cdot \left(\frac{d+2}{2}\right)^{\frac{d-2}{2}} \linebreak=\left(\frac{d+2}{2}\right)^{d-2}$ many maximal simplices and hence $Q_{\partial (\sigma_{d+1})}$ has normalized volume
$\frac{(d+2)^d}{2^d}$. Since, by \Cref{thm: description interior polytope} (c),  $\Ptd+\oneb= 2\cdot Q_{\partial (\sigma_{d+1})}$, the claim follows.
\end{proof}


\subsection{Unimodality and real-rootedness}
The goal of this subsection is to prove \Cref{thm: unimodality real-rooted}.

If $d$ is even, then by the proof of \Cref{thm: r.u.t of P}, $Q_{\partial (\sigma_{d+1})}$ has a regular unimodular triangulation. Since it is also  reflexive (after translation) by \Cref{thm: description interior polytope} (b), the next statement is immediate from \cite[Theorem 1]{BrunsRoemer2007} (see also \cite[Theorem 1.3]{Athanasiadis}):

\begin{lem}\label{lem: unimodality Interior polytope}
    Let $d$ be an even positive integer. Then $h^\ast(Q_{\partial (\sigma_{d+1})})$ is symmetric and unimodal.
\end{lem}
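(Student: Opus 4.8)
The plan is to invoke a single, well-established criterion. The statement we want to prove, \Cref{lem: unimodality Interior polytope}, asserts that $h^\ast(Q_{\partial (\sigma_{d+1})})$ is symmetric and unimodal for $d$ even. The key observation is that $Q_{\partial (\sigma_{d+1})}$ satisfies two strong hypotheses that have already been verified earlier in the paper: by \Cref{thm: description interior polytope} (b), the translate $Q_{\partial (\sigma_{d+1})}-\oneb$ is reflexive, and by the proof of \Cref{thm: r.u.t of P} it carries a regular unimodular triangulation. These are exactly the conditions under which the $h^\ast$-polynomial is known to behave optimally.

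The symmetry part is essentially free: reflexivity is equivalent (via Hibi's criterion, recalled in \Cref{sec: pre}) to palindromicity of the $h^\ast$-vector, so $h^\ast_i(Q_{\partial (\sigma_{d+1})})=h^\ast_{d-i}(Q_{\partial (\sigma_{d+1})})$ holds automatically. For unimodality, the plan is to cite the theorem of Bruns and R\"omer \cite{BrunsRoemer2007}, which states that a reflexive lattice polytope admitting a regular unimodular triangulation has a unimodal (indeed, even $\gamma$-nonnegative) $h^\ast$-vector. Since both hypotheses are already established for $Q_{\partial (\sigma_{d+1})}$, the conclusion follows immediately. One should note that translation by $\oneb$ changes neither the Ehrhart series nor the $h^\ast$-vector, so applying these results to $Q_{\partial (\sigma_{d+1})}-\oneb$ and transferring back to $Q_{\partial (\sigma_{d+1})}$ causes no loss.

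I would write this as a one-line deduction, since all the technical content has been front-loaded into the earlier results. There is essentially no obstacle here: the only thing to be careful about is making sure the two cited ingredients (reflexivity and the existence of a regular unimodular triangulation) are genuinely available and correctly attributed. Reflexivity comes from \Cref{thm: description interior polytope} (b), and the regular unimodular triangulation of $Q_{\partial (\sigma_{d+1})}$ is constructed explicitly within the proof of \Cref{thm: r.u.t of P} (the triangulation $\mathcal{T}$ obtained by coning the edgewise-subdivision triangulations of the facets over the interior point $\oneb$). With both in hand, \cite[Theorem 1]{BrunsRoemer2007}, or equivalently \cite[Theorem 1.3]{Athanasiadis}, closes the argument at once.
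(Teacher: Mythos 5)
Your proposal is correct and coincides with the paper's own proof: the paper likewise combines the reflexivity of $Q_{\partial (\sigma_{d+1})}-\oneb$ from \Cref{thm: description interior polytope}~(b) with the regular unimodular triangulation constructed in the proof of \Cref{thm: r.u.t of P}, and then cites \cite[Theorem 1]{BrunsRoemer2007} (with \cite[Theorem 1.3]{Athanasiadis} as a secondary reference). The only slight imprecision is your phrase ``or equivalently'' regarding \cite[Theorem 1.3]{Athanasiadis}: that result alone yields only that the $h^\ast$-vector decreases in its second half, so it gives unimodality only in combination with the palindromicity coming from reflexivity, whereas the Bruns--R\"omer theorem gives it directly.
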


To show unimodality of $h^\ast (\Ptd)$, if $d$ is even, we need to analyze the change of the $h^\ast$-vector under the second dilation of a polytope (cf., \Cref{thm: description interior polytope} (c)). 
Given a $d$-dimensional lattice polytope $P$, it follows, e.g., from \cite[Theorem 1.1]{BRENTI2009545}  (see also \cite{Beck,VeroneseConstruction}) that
\begin{equation}\label{eqn: h i star 2nd dilation}
    h_i^\ast(2P)=\sum_{j=0}^{d} \binom{d+1}{2i-j}h_j^\ast(P).
\end{equation}

We need the following technical but crucial lemma.

\begin{lem}\label{lem: symmetrie r_j}
Let $i\in\N$ and $r_j\coloneqq\binom{d+1}{2i+2-j}-\binom{d+1}{2i-j}$. Then  for $k\in\N$, we have
\[
-r_{\lceil2i+2-\frac{d+3}{2} \rceil-k}=r_{\lfloor2i+2-\frac{d+3}{2} \rfloor+k}.
\]
\end{lem}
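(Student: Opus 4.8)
The plan is to reduce the whole statement to a single antisymmetry property of the sequence $(r_j)_{j\in\Z}$, where throughout I use the standard convention $\binom{d+1}{s}=0$ whenever $s<0$ or $s>d+1$, so that the binomial symmetry $\binom{d+1}{s}=\binom{d+1}{d+1-s}$ holds for \emph{every} integer $s$. Concretely, I would first show that the sequence is antisymmetric about the half-point $j_0\coloneqq 2i+2-\frac{d+3}{2}$, i.e.
\[
r_j+r_{4i+1-d-j}=0 \qquad \text{for all } j\in\Z,
\]
and then observe that the two indices appearing in the lemma always add up to $4i+1-d=2j_0$, so that the identity falls out immediately.

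First I would establish the antisymmetry by applying $\binom{d+1}{s}=\binom{d+1}{d+1-s}$ to each of the two binomials defining $r_j$:
\[
r_j=\binom{d+1}{2i+2-j}-\binom{d+1}{2i-j}
=\binom{d+1}{d-1-2i+j}-\binom{d+1}{d+1-2i+j}.
\]
Comparing the right-hand side with the definition $r_{j'}=\binom{d+1}{2i+2-j'}-\binom{d+1}{2i-j'}$, one reads off $j'=4i+1-d-j$ (both binomials force the same value of $j'$), and hence $r_j=-r_{4i+1-d-j}$. This is the only genuine computation in the argument, and it is a purely formal manipulation of binomial symmetry.

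It remains to match the indices. Set $j_0=2i+2-\frac{d+3}{2}$, so that $2j_0=4i+1-d\in\Z$. Because $2j_0$ is an integer, we have $\lceil j_0\rceil+\lfloor j_0\rfloor=2j_0$, regardless of whether $j_0$ is itself an integer or a half-integer. Consequently, for any $k\in\N$,
\[
\big(\lceil j_0\rceil-k\big)+\big(\lfloor j_0\rfloor+k\big)=\lceil j_0\rceil+\lfloor j_0\rfloor=2j_0=4i+1-d,
\]
so that $\lfloor j_0\rfloor+k$ and $\lceil j_0\rceil-k$ are exactly a pair of reflected indices. Plugging $j=\lfloor j_0\rfloor+k$ into the antisymmetry $r_j=-r_{4i+1-d-j}$ then yields $r_{\lfloor j_0\rfloor+k}=-r_{\lceil j_0\rceil-k}$, which is the claimed equality. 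The main (mild) obstacle is purely bookkeeping: one has to confirm that $\lceil j_0\rceil+\lfloor j_0\rfloor=2j_0$ holds because $2j_0=4i+1-d$ is integral, covering uniformly both the integer and half-integer cases for $j_0$; everything else is an immediate consequence of binomial symmetry.
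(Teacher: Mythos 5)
Your proof is correct and takes essentially the same approach as the paper's: both arguments reduce to the binomial symmetry $\binom{d+1}{s}=\binom{d+1}{d+1-s}$ (with out-of-range binomials set to zero) together with the index arithmetic $\lceil j_0\rceil+\lfloor j_0\rfloor=2j_0=4i+1-d$ for $j_0=2i+2-\frac{d+3}{2}$. The paper verifies the two cross-identities $a_{\lceil j_0\rceil-k}=b_{\lfloor j_0\rfloor+k}$ and $b_{\lceil j_0\rceil-k}=a_{\lfloor j_0\rfloor+k}$ at the paired indices, which is exactly your global antisymmetry $r_j=-r_{4i+1-d-j}$ specialized to $j=\lfloor j_0\rfloor+k$, so the difference is purely one of packaging (your explicit statement of the vanishing convention is a precision the paper leaves tacit).
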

\begin{proof}
    We set
    $a_j=\binom{d+1}{2i+2-j}$ and $b_j=\binom{d+1}{2i-j}$. 
    The claim follows if both
    \[
    a_{\lceil2i+2-\frac{d+3}{2} \rceil-k}=b_{\lfloor2i+2-\frac{d+3}{2} \rfloor+k} \quad \mbox{and} \quad b_{\lceil2i+2-\frac{d+3}{2} \rceil-k}=a_{\lfloor2i+2-\frac{d+3}{2} \rfloor+k}
    \]
    hold. Due to the symmetry of the binomial coefficient it suffices to show that
    \begin{itemize}
    \item[(i)]
        $(2i+2-\lceil2i+2-\frac{d+3}{2} \rceil+k)+(2i-\lfloor2i+2-\frac{d+3}{2} \rfloor-k)=d+1$ 
    \item[(ii)]
        $(2i-\lceil2i+2-\frac{d+3}{2} \rceil+k)+(2i+2-\lfloor2i+2-\frac{d+3}{2} \rfloor-k)=d+1.$
    \end{itemize}
    It is obvious that (i) and (ii) are equivalent. The claim follows from direct computations.
\end{proof}

The next statement will be the key ingredient to show that $h^\ast(\Ptd)$ is unimodal.

\begin{prop}\label{lem: first half h 2nd dilation increasing}
Let  $b=(b_0,\ldots,b_d)$ be a symmetric and unimodal sequence of non-negative reals. Let $c=(c_0,\ldots,c_d)$ be defined by 
\[
    c_i=\sum_{j=0}^{d} \binom{d+1}{2i-j}b_j.
\] 
Then,
\[
c_{0}\leq c_1\leq \cdots\leq c_{\lfloor\frac{d+1}{2}\rfloor}.
\]
\end{prop}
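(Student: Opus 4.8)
The plan is to prove the stronger pointwise statement that each consecutive difference $c_{i+1}-c_i$ is non-negative for $0\le i\le \lfloor(d+1)/2\rfloor-1$. First I would record that
\[
c_{i+1}-c_i=\sum_{j=0}^{d}\left(\binom{d+1}{2i+2-j}-\binom{d+1}{2i-j}\right)b_j=\sum_{j=0}^{d}r_jb_j,
\]
where $r_j$ is exactly the quantity from \Cref{lem: symmetrie r_j}, with reflection center $m\coloneqq 2i+2-\tfrac{d+3}{2}$. \Cref{lem: symmetrie r_j} says that $r_j$ is antisymmetric about $m$, i.e. $r_{2m-j}=-r_j$ for all $j$; note $2m=4i+1-d\in\Z$, so $j\mapsto 2m-j$ is an involution of $\Z$, and this antisymmetry is the engine of the argument.

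The main obstacle is that the reflection $j\mapsto 2m-j$ can send an index $j\in\{0,\dots,d\}$ outside that range, where $b$ is undefined, so a naive pairing of terms breaks down at the boundary. I would circumvent this by extending $b$ to a sequence $\tilde b$ on all of $\Z$ via $\tilde b_j=b_j$ for $0\le j\le d$ and $\tilde b_j=0$ otherwise. Since $b$ is non-negative, symmetric and unimodal about $d/2$, the sequence $\tilde b$ is again symmetric and unimodal about $d/2$ on $\Z$ (the appended zeros are the smallest possible values and sit beyond both ends). Because $r_j\tilde b_j$ vanishes outside $\{0,\dots,d\}$, we have $c_{i+1}-c_i=\sum_{j\in\Z}r_j\tilde b_j$, and reindexing by the involution together with $r_{2m-j}=-r_j$ yields
\[
c_{i+1}-c_i=\tfrac12\sum_{j\in\Z}r_j\left(\tilde b_j-\tilde b_{2m-j}\right).
\]

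It then suffices to check that every summand is non-negative, which rests on two elementary monotonicity facts. First, unimodality of the binomial coefficients gives $r_j\ge 0$ whenever $j\ge m$ and $r_j\le 0$ whenever $j\le m$ (comparing $\binom{d+1}{2i+2-j}$ with $\binom{d+1}{2i-j}$ reduces to whether $2i+2-j\le\tfrac{d+3}{2}$). Second, for $\tilde b$ symmetric unimodal about $d/2$ one has $\tilde b_j\ge\tilde b_{j'}$ whenever $j>j'$ and $j+j'\le d$. For $j>m$ the partner $2m-j$ is the smaller of the two and $j+(2m-j)=2m$, so $\tilde b_j-\tilde b_{2m-j}\ge 0$ precisely when $2m\le d$; the case $j<m$ is the mirror image (both factors flip sign, so the product is again non-negative), and $j=m$ forces $r_m=0$. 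Hence every summand is non-negative as soon as $2m=4i+1-d\le d$.

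Finally I would verify that the hypothesis $i\le\lfloor(d+1)/2\rfloor-1$ is exactly what guarantees $2m\le d$: at the extreme value of $i$ one gets $2m\le d-3$ for $d$ even and $2m\le d-1$ for $d$ odd, so $2m\le d$ holds throughout the range. This gives $c_{i+1}-c_i\ge 0$ for all relevant $i$, i.e. $c_0\le c_1\le\cdots\le c_{\lfloor(d+1)/2\rfloor}$. The one genuinely delicate point is the boundary bookkeeping — keeping the antisymmetric pairing valid once indices leave $\{0,\dots,d\}$ — and this is precisely what the zero-extension $\tilde b$ resolves, since it preserves symmetric unimodality; everything else is routine from the two monotonicity facts.
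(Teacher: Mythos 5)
Your proof is correct and follows essentially the same route as the paper's: both arguments rest on the antisymmetry of $r_j$ about $2i+2-\tfrac{d+3}{2}$ from \Cref{lem: symmetrie r_j}, the sign pattern $r_j\ge 0$ for $j\ge 2i+2-\tfrac{d+3}{2}$, and the monotonicity of a symmetric unimodal sequence, with the bound $4i+1-d\le d$ playing the identical role in both. Your zero-extension of $b$ and folding of the sum over the involution $j\mapsto 2m-j$ is a tidier bookkeeping device that replaces the paper's explicit splitting of the sum at $2\bigl(2i+2-\tfrac{d+3}{2}\bigr)$ and its floor/ceiling pairing, but it is not a different method.
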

\begin{proof}
    We define $r_j$ as in \Cref{lem: symmetrie r_j}. Note that $r_j\geq 0$ if and only if $j\geq 2i+2-\frac{d+3}{2}$. 
    For $0\le i< \frac{d+1}{2}$, we have
    \begin{align*}
    c_{i+1}-c_i=&\sum_{j=0}^{d} \left[\binom{d+1}{2i+2-j}-\binom{d+1}{2i-j}\right]b_j
    = \sum_{j=0}^{d} r_jb_j\\
    =& \sum_{j=0}^{2(2i+2-\frac{d+3}{2})} r_jb_j+\sum_{j=2(2i+2-\frac{d+3}{2})+1}^d r_jb_j\\
    =&\sum_{j=1}^{\lceil 2i+2-\frac{d+3}{2}\rceil}r_{\lfloor 2i+2-\frac{d+3}{2}\rfloor+j}\left(b_{\lfloor 2i+2-\frac{d+3}{2}\rfloor+j}-b_{\lceil 2i+2-\frac{d+3}{2}\rceil-j}\right)\\
    &+r_{2i+2-\frac{d+3}{2}}b_{2i+2-\frac{d+3}{2}}+\sum_{j=2(2i+2-\frac{d+3}{2})+1}^d r_jb_j,
    \end{align*}
     where for the last equality, we use \Cref{lem: symmetrie r_j} and we set  $+r_{2i+2-\frac{d+3}{2}}b_{2i+2-\frac{d+3}{2}}=0$ if $d$ is even. 
    Since $b_j\geq 0$ and $r_j\geq 0$ for$j\geq 2i+2-\frac{d+3}{2}$, it follows that the single summand and the sum in the last line of the above computation are both non-negative. Concerning the first sum, the coefficients $r_{2i+2-\frac{d+3}{2}+j}$ are non-negative and therefore, in order to show non-negativity of $c_{i+1}-c_i$, it suffices to show that for $1\leq j\leq \lceil 2i+2-\frac{d+3}{2}\rceil$, we have
    \[
   b_{\lfloor 2i+2-\frac{d+3}{2}\rfloor+j}\geq b_{\lceil 2i+2-\frac{d+3}{2}\rceil-j}.
    \]
    This directly follows from the unimodality and symmetry of the sequence $b$ if $2i+2-\frac{d+3}{2}+j\leq \frac{d+1}{2}$. Assume $2i+2-\frac{d+3}{2}+j> \frac{d+1}{2}$. Since $i\leq \frac{d}{2}$, we have 
    \[
    \frac{d+1}{2}< 2i+2-\frac{d+3}{2}+j\leq d+2-\frac{d+3}{2}+j=\frac{d+1}{2}+j\leq\left \lfloor \frac{d+1}{2}\right\rfloor+j.
    \]
Using that $b$ is symmetric and unimodal it follows that 
    \[
    b_{\lfloor 2i+2-\frac{d+3}{2}\rfloor+j}\geq b_{\left \lfloor \frac{d+1}{2}\right\rfloor+j}=b_{d-\left\lfloor\frac{d}{2}\right\rfloor-j}\geq b_{\lceil 2i+2-\frac{d+3}{2}\rceil-j}.
    \]
    This shows the claim.
\end{proof}
We now recall and prove \Cref{thm: unimodality real-rooted}:

\TheoremB*

\begin{proof}
Since $P_{\partial(\sigma_{d+1})}$ has a regular unimodular triangulation $\mathcal{T}$ by \Cref{thm: r.u.t of P}, we have\\ $h^\ast(P_{\partial(\sigma_{d+1})})=h(\mathcal{T})$. If $d$ is odd, such a triangulation is given by 
\[
\mathrm{esd}_{d+2}\left(\Delta_{\frac{d+1}{2}}\right)\ast \mathrm{esd}_{d+2}\left(\Delta_{\frac{d-1}{2}}\right)
\]
and its $h$-polynomial equals $h\left(\mathrm{esd}_{d+2}\left(\Delta_{\frac{d+1}{2}}\right);t\right)\cdot h\left(\mathrm{esd}_{d+2}\left(\Delta_{\frac{d-1}{2}}\right);t\right)$. Since both factors are real-rooted by \cite[Corollary 4.4]{VeroneseConstruction}, so is $h^\ast\left(P_{\partial(\sigma_{d+1})};t\right)$.

Suppose that $d$ is even. Combining \Cref{lem: unimodality Interior polytope}, \eqref{eqn: h i star 2nd dilation} and \Cref{lem: first half h 2nd dilation increasing}, we get that $h^\ast(P_{\partial(\sigma_{d+1})})$ is increasing up to the middle, i.e., 
    \[
    h_0^\ast(P_{\partial(\sigma_{d+1})})\leq h_1^\ast(P_{\partial(\sigma_{d+1})})\leq\cdots\le h_{\frac{d}{2}}^\ast(P_{\partial(\sigma_{d+1})}).
    \]
    Since, by \Cref{thm: r.u.t of P}, $P_{\partial(\sigma_{d+1})}$ has a regular unimodular triangulation it follows by \cite[Theorem 1.3]{Athanasiadis} that $h^\ast(P_{\partial(\sigma_{d+1})})$ is decreasing beyond the middle, i.e.,
    \[
    h_{\frac{d}{2}}^\ast(P_{\partial(\sigma_{d+1})})\geq \cdots\geq h_d^\ast(P_{\partial(\sigma_{d+1})}).
    \]
The claim follows.
\end{proof}
We would like to remark that even though the interior polytope $Q_{\partial (\sigma_{d+1})}$ has a symmteric $h^\ast$-vector, this is not true for $P_{\partial(\sigma_{d+1})}$.


\section{Open problems}\label{sec: Questions} 
We end this article with some obvious directions for future research. 

We have initiated the study of Laplacian polytopes $P^{(i)}_\Delta$ by studying the special case that $\Delta$ is the boundary of a $(d+1$)-simplex and $i=d$. It is therefore natural to consider the following very general problem. 
\begin{problem}\label{prob: general}
Study geometric and combinatorial properties of $P^{(i)}_\Delta$ for (classes of) simplicial complexes and general $0\leq i\leq \dim\Delta$. In particular: What is the normalized volume? When do these polytopes have a regular unimodular triangulation? What properties do the $h^\ast$-vector and the $h^\ast$-polynomial have? 
\end{problem}
In view of \Cref{prop: L full rank P fd-1-simplex}, a good starting point might be to study $P^{(d)}_\Delta$ for simplicial $d$-balls, since in this case we already know that $P^{(d)}_\Delta$ is a simplex. As part of this problem, it might be useful to consider how Laplacian polytopes change under certain operations on the simplicial complex, e.g., deletion/contraction of vertices, taking links, connected sums, joins. We want to remark that for $i=1$ we get Laplacian simplices as studied in \cite{LaplSimplBraun2017} and \cite{meyer2018laplacian}.

We have shown that $P_{\partial(\sigma_{d+1})}$ has a regular unimodular triangulation by explicitly constructing one. However, for more general classes of simplicial complexes, a better approach might be to compute a Gr\"obner basis of the toric ideal. This gives rise to the following problem whose solution would also contribute to \Cref{prob: general}:

\begin{problem}
Describe a Gr\"obner basis of the toric ideal of $P^{(i)}_\Delta$ in terms of the combinatorics of $\Delta$. When does there exist a squarefree Gr\"obner basis (giving rise to a regular unimodular triangulation)?
\end{problem}

We want to emphasize that the Laplacian polytope depends on the ordering of the vertices of $\Delta$ (see \Cref{ex: ordering}). It is therefore natural to ask the following question:
\begin{question}
Which orderings yield (up to unimodular or combinatorial equivalence) the same Laplacian polytope? How many equivalence classes are there?
\end{question}

Apart from these more general problems, there are several open questions that are directly related to our results. In \Cref{cor: normalized volume}, we have computed the normalized volume of $P_{\partial(\sigma_{d+1})}$ explicitly and thereby have obtained a precise formula for the sum of the $h^\ast$-vector entries. Using the explicit regular unimodular triangulation from \Cref{thm: r.u.t of P} and inclusion-exclusion we can also express the $h^\ast$-polynomial as alternating sum, where all summands are products of $h^\ast$-polynomials of edgewise subdivisions of dilated simplices of varying dimension. Note that for $d$ odd, we only have one summand. However, this does not yield a direct combinatorial interpretation of the entries of the $h^\ast$-vector. We therefore propose the following problem:
\begin{problem}
Find a combinatorial interpretation of the entries of the $h^\ast$-vector of $P_{\partial(\sigma_{d+1})}$ (see \Cref{tab: hstar vectors} for the $h^\ast$-vectors if $1\leq d\leq 8$).
\end{problem}

\begin{table}[h]
    \centering
    \begin{tabular}{c|l}
      $d$ & $h^*\left(P_{\partial \sigma_{d+1}}\right)$   \\ \hline
       1  & $\left(1,2,0\right)$\\
       2  & $\left(1,10, 5\right)$\\
       3  & $\left(1,22,78,24,0\right)$\\
       4  & $\left(1,131, 726, 419,19\right)$\\
       5  & $\left(1,149,4049,8558,3750,300,0\right)$\\
       6  & $\left(1,  1478,38179,126372, 85623,10422,69\right)$\\
       7  & $(1,926,157566, 1135846, 2188310,1150800,145600,3920,0)$ \\
       8  & $(1, 17617, 1581403, 6864069, 43252570, 31729319, 6314903, 239867, 251)$\\
    \end{tabular}
    \caption{The $h^\ast$-vectors of $P_{\partial \sigma_{d+1}}$ for $d=1,\ldots,8$.}
    \label{tab: hstar vectors}
\end{table}

  Finally, in view of \Cref{thm: unimodality real-rooted} (a), we have the following conjecture:

\begin{con}
Let $d$ be even. Then, $h^\ast\left(P_{\partial(\sigma_{d+1})};x\right)$ is real-rooted.
\end{con}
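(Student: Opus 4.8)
The plan is to reduce the even case to a single real-rootedness statement about the interior polytope and then invoke the Hermite--Biehler theorem. Write $H(t)\coloneqq h^\ast(Q_{\partial(\sigma_{d+1})};t)=h^\ast(Q_{\partial(\sigma_{d+1})}-\oneb;t)$; by \Cref{lem: unimodality Interior polytope} this is a palindromic, unimodal polynomial of degree $d$ with $H(0)=1$. Since $Q_{\partial(\sigma_{d+1})}-\oneb$ is reflexive of even dimension $d$ and, by \Cref{thm: description interior polytope}~(c), $P_{\partial(\sigma_{d+1})}$ agrees up to unimodular equivalence and translation with its second dilation, the entire task is to control the passage from $H$ to $h^\ast(P_{\partial(\sigma_{d+1})};t)$ under doubling.

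First I would record an exact closed form for this passage. Taking the even part of the Ehrhart series $\sum_{n\ge0}E(n)x^n=H(x)/(1-x)^{d+1}$ of $Q_{\partial(\sigma_{d+1})}-\oneb$ (equivalently, resumming \eqref{eqn: h i star 2nd dilation}) and clearing denominators gives
\[
h^\ast\big(P_{\partial(\sigma_{d+1})};x^2\big)=\tfrac12\Big[(1+x)^{d+1}H(x)+(1-x)^{d+1}H(-x)\Big]=\tfrac12\big(A(x)+A(-x)\big),
\]
where $A(x)\coloneqq(1+x)^{d+1}H(x)$; that is, $h^\ast(P_{\partial(\sigma_{d+1})};x^2)$ is exactly the even part of $A$. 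Suppose for the moment that $H$ is real-rooted. Since $H$ has nonnegative coefficients and $H(0)=1$, its roots are strictly negative, so $A$ has only strictly negative real roots and is therefore Hurwitz stable. Writing $A(x)=P(x^2)+x\,B(x^2)$, the Hermite--Biehler theorem then forces $P$ and $B$ to have only negative real (interlacing) roots; as $P(x^2)$ is the even part of $A$, we get $h^\ast(P_{\partial(\sigma_{d+1})};t)=P(t)$, which is real-rooted. I have checked this mechanism directly against \Cref{tab: hstar vectors} for $d\in\{2,4,6\}$.

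This reduces the conjecture to the single claim that $H=h^\ast(Q_{\partial(\sigma_{d+1})})$ is real-rooted. The data are very suggestive: for $d=2,4,6$ one computes $H=(t+1)^2$, $H=(t^2+7t+1)^2$ and $H=(t^3+31t^2+31t+1)^2$, and in each case the square root equals $h\big(\mathrm{esd}_{(d+2)/2}(\Delta_{d/2});t\big)$. I would therefore aim to prove the factorisation
\[
h^\ast\big(Q_{\partial(\sigma_{d+1})};t\big)=h\big(\mathrm{esd}_{(d+2)/2}(\Delta_{d/2});t\big)^2 .
\]
Granting this, real-rootedness of $H$ is immediate: the factor $h(\mathrm{esd}_{(d+2)/2}(\Delta_{d/2});t)$ is real-rooted by \cite[Corollary 4.4]{VeroneseConstruction} (the very input used in the odd case), and the square of a real-rooted polynomial is real-rooted; the Hermite--Biehler step above then completes the argument.

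The hard part, and the expected main obstacle, is the factorisation itself. The natural tool is Stanley's local $h$-vector calculus applied to the regular unimodular triangulation $\mathcal{T}$ of $Q_{\partial(\sigma_{d+1})}$ built in the proof of \Cref{thm: r.u.t of P}. There $\mathcal{T}$ is the cone with apex $\oneb$ over a triangulation $\mathcal{S}$ of the boundary sphere whose restriction to each facet is the join $\mathrm{esd}_{(d+2)/2}(\Delta_{(d-2)/2})\ast\mathrm{esd}_{(d+2)/2}(\Delta_{(d-2)/2})$; since coning preserves $h$-vectors, $H=h(\mathcal{S};t)$, and $h(\mathcal{S};t)$ expands as a sum of local $h$-polynomials of these edgewise subdivisions over the faces of the cyclic boundary complex $T^d_{d/2}$ (see \Cref{thm: combinatorial type}). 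The difficulty is that this expansion couples the facets, so neither real-rootedness nor the exact square is visible facet-by-facet; one must exploit the $\Z/2$-symmetry of the construction interchanging the odd- and even-indexed coordinate blocks (equivalently, the two join factors) to collapse the local-$h$ sum into a single perfect square. I expect this symmetry-driven collapse of the global $h$-polynomial, rather than the Hermite--Biehler reduction, to be where the real work lies.
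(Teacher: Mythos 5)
First, a point of orientation: this statement is an open conjecture in the paper, not a theorem --- the authors only report computational verification up to $d=10$ and explicitly say they have not carried out an approach (they suggest interlacing sequences). So there is no proof of record to compare against, and the question is whether your attempt closes the conjecture. Your reduction step is correct and worthwhile. Since translation does not change $h^\ast$ and, by \Cref{thm: description interior polytope}~(c), $\Ptd-\oneb=2\left(Q_{\partial (\sigma_{d+1})}-\oneb\right)$, the relation $E_{2P'}(n)=E_{P'}(2n)$ applied to $P'=Q_{\partial (\sigma_{d+1})}-\oneb$ gives exactly your identity
\[
h^\ast\bigl(P_{\partial(\sigma_{d+1})};x^2\bigr)=\tfrac12\Bigl[(1+x)^{d+1}H(x)+(1-x)^{d+1}H(-x)\Bigr],
\]
which is the generating-function form of \eqref{eqn: h i star 2nd dilation}. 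The Hermite--Biehler step is also sound: $H$ has nonnegative coefficients, $H(0)=1$, and degree exactly $d$ (reflexivity, \Cref{thm: description interior polytope}~(b)), so real-rootedness of $H$ would place all roots of $A(x)=(1+x)^{d+1}H(x)$ on the negative real axis, making $A$ Hurwitz stable, and Hermite--Biehler then makes the even part --- which is $h^\ast(P_{\partial(\sigma_{d+1})};x^2)$ --- real-rooted. I verified your mechanism numerically: for $d=4$, the even part of $(1+x)^5(1+7x+x^2)^2$ is $1+131x^2+726x^4+419x^6+19x^8$, matching \Cref{tab: hstar vectors}, and the sum of coefficients of your conjectured square, $\bigl(\left(\tfrac{d+2}{2}\right)^{d/2}\bigr)^2=\tfrac{(d+2)^d}{2^d}$, matches the normalized volume of $Q_{\partial (\sigma_{d+1})}$ computed in the proof of \Cref{cor: normalized volume}.

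Nevertheless, the proposal has a genuine gap and does not prove the conjecture: everything rests on the unproven claim that $H=h^\ast\bigl(Q_{\partial (\sigma_{d+1})};t\bigr)$ is real-rooted, or on your stronger conjectured identity $H=h\bigl(\mathrm{esd}_{(d+2)/2}(\Delta_{d/2});t\bigr)^2$, which you have checked only for $d\in\{2,4,6\}$. Your sketch for this --- Stanley's local-$h$ decomposition of $h(\mathcal{S};t)$ over the faces of the boundary complex of $T^d_{d/2}$ --- is a plan, not an argument: the locality formula sums local $h$-polynomials of the restricted edgewise subdivisions against $h$-polynomials of links over \emph{all} faces of the cyclic-type complex, and nothing you write shows this sum collapses to a perfect square; the $\Z/2$-symmetry you invoke swaps the two join factors within each facet, but it does not obviously organize the cross-terms between different faces of the cyclic boundary complex, which is precisely where the coupling you acknowledge lives. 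As it stands, you have correctly and usefully reduced the paper's conjecture to a sharper, equally open conjecture about the interior polytope (a reduction the authors do not make --- they propose interlacing on $h^\ast(P_{\partial(\sigma_{d+1})})$ directly), but the key step remains missing, so the statement is not proved.
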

We have verified this conjecture computationally up to $d=10$. For this problem, we suspect that an approach via interlacing sequences might be helpful, but we have not been able to carry it out so far.


\section{Appendix}
We provide the missing parts of the proof of \Cref{prop: facets as join}.  We recall some notation. We denote by $E'_k\in\Z^{(k-1)\times k}$ the $(k\times k)$-identity matrix with its first row removed and by $\oneb_{m\times n}$ and $\zerob_{m\times n}$ the $(m\times n)$-matrices whose entries are all equal to $1$ and $0$, respectively.  Moreover, we denote by $\Mcc\left(\frac{d+2}{2}\Delta_{\frac{d-2}{2}}-\oneb_{\frac{d-2}{2}\times\frac{d}{2}} \right)$ the matrix whose columns are the vertices of $\frac{d+2}{2}\Delta_{\frac{d-2}{2}}-\oneb_{\frac{d-2}{2}\times\frac{d}{2}}$ in the obvious order.

\begin{proof}[Proof of \Cref{prop: facets as join} (a)]
Let $d\geq 2$ and for a fixed even integer $i\in [d]$, consider the facet $F=\{x\in \R^d~:~\oneb_{\mathrm{odd}}^{\intercal} \cdot x-x_i\leq\frac{d}{2}\}$ of $Q_{\partial (\sigma_{d+1})}$. By \Cref{cor: matrix P I and interior lattice point}, the vertices of $F$ are $\{c^{(\ell)}~\colon~\ell\in[d+2]\setminus\{1,i+2\}\}$. We now consider the matrix $B\in\Z^{d\times d}$ whose $\ell$-th column equals $c^{(2\ell+1)}$ if $1\leq \ell\leq \frac{d}{2}$ and $c^{(2\ell-d)}$ if $\frac{d}{2}+1\leq \ell\leq \frac{i+d}{2}$ and $c^{(2\ell+2-d)}$ if $\frac{i+d}{2}+1\leq \ell\leq d$. 
If we reorder the rows of $B$, by taking first the rows with odd index, increasingly, followed by the row with index $i$ and then the remaining rows with even index, increasingly,  we obtain a matrix $S$, which looks as follows:
\[
S=\left(\begin{array}{ccc|ccc}
	& E_{\frac{d}{2}}\cdot\frac{d+2}{2} & & & \oneb_{\frac{d}{2}\times\frac{d}{2}}&\\ \hline
	1&\cdots&1&0&\cdots&0\\ \hline
		&\oneb_{\frac{d-2}{2}\times\frac{d}{2}} & & &  E'_{\frac{d}{2}}\cdot\frac{d+2}{2}&\\ 	
\end{array}\right).
\]
Clearly, $F\cong \conv(T)$. Let
\begin{equation*}
U=\left(\begin{array}{ccr|ccc}
			&E'_{\frac{d}{2}} & & & \zerob_{\frac{d-2}{2}\times\frac{d}{2}}&    \\ \hline
			& \zerob_{\frac{d-2}{2}\times\frac{d}{2}}& & &E'_{\frac{d}{2}} &   \\ \hline
		0 &\cdots & 0& -1\ 0& \cdots& 0 \\ \hline 
		 1&\cdots &1 &-1 \ 0&\cdots  &  0\\
		\end{array}\right)\in\Z^{d\times d}.
\end{equation*}
It is easy to see that $U$ is unimodular and a direct computation shows that
\begin{equation*}
U\cdot (S-\oneb_{d\times d})=\left(\begin{array}{ccc|ccc}
	& \Mcc\left(\frac{d+2}{2}\Delta_{\frac{d-2}{2}}-\oneb_{\frac{d-2}{2}\times\frac{d}{2}} \right)& & &\zerob_{\frac{d-2}{2}\times\frac{d}{2}}  &	\\ \hline
	&\zerob_{\frac{d-2}{2}\times\frac{d}{2}}  & & & \Mcc\left(\frac{d+2}{2}\Delta_{\frac{d-2}{2}}-\oneb_{\frac{d-2}{2}\times\frac{d}{2}} \right)&	\\ \hline
	0& \cdots&0 &1 & \cdots&1	\\ 
		1& \cdots &1 &1  &\cdots  &  1\\
\end{array}\right).
\end{equation*}
Since $F\cong \conv( U \cdot (S-\oneb_{d\times d}))$, the claim follows after projection on the first $d-1$ coordinates and by the definition of the join. We also note that the vertices of $F$  corresponding to the vertices of the dilated simplices are $\{c^{(2\ell+1)}~\colon~1\leq \ell \leq \frac{d}{2}\}$ and $\{c^{(2\ell)}~\colon~2\leq \ell \leq \frac{d}{2}+1,\ \ell\neq\frac{i+2}{2}\}$.\\

 For a fixed odd integer $j\in [d]$, consider the facet $G=\{x\in \R^d~:~\oneb_{\mathrm{even}}^{\intercal} \cdot x-x_j\leq \frac{d}{2}\}$ of $Q_{\partial (\sigma_{d+1})}$. 
 By \Cref{cor: matrix P I and interior lattice point}, the vertices of $F$ are $\{c^{(\ell)}~\colon~\ell\in[d+2]\setminus\{2,j+2\}\}$. We now consider the matrix $C\in\Z^{d\times d}$ whose $\ell$-th column equals $c^{(2\ell-1)}$ if $1\leq \ell\leq \frac{j+1}{2}$ and $c^{(2\ell+1)}$ if  $\frac{j+3}{2}\leq \ell\leq \frac{d}{2}$ and $c^{(2\ell+2-d)}$ if $\frac{d}{2}+1\le \ell\le d$. 
If we reorder the rows of $C$ by taking first the rows with odd index $k\in[d]\setminus \{j\}$, increasingly, followed by row $j$ and then the rows with even index, increasingly, we obtain a matrix $S$, which looks as follows: \[S=\left(\begin{array}{ccc|ccc}
	& E'_{\frac{d}{2}}\cdot\frac{d+2}{2} & & & \oneb_{\frac{d-2}{2}\times\frac{d}{2}}&\\ \hline
	0&\cdots&0&1&\cdots&1\\ \hline
		&\oneb_{\frac{d}{2}\times\frac{d}{2}} & & &  E_{\frac{d}{2}}\cdot\frac{d+2}{2}&\\ 	
\end{array}\right).
\]
 Clearly, $G\cong \conv(S)$. Let
\begin{equation*}
U=\left(\begin{array}{ccrccc}
			E_{\frac{d}{2}-1}&\Bigg| & & \zerob_{\frac{d-2}{2}\times\frac{d+2}{2}}&    \\ \hline
			& \zerob_{\frac{d-2}{2}\times\frac{d}{2}}&\Bigg| & E'_{\frac{d}{2}}  & \\ \hline
			0&\cdots & 0\ \big|& 1& \cdots& 1 \\ \hline 
			0\cdots& 0 &-1\Big| &1 &\cdots  &  1\\
		\end{array}\right)\in\Z^{d\times d}.
\end{equation*}
It is easy to see that $U$ is unimodular and a direct computation shows that
\begin{equation*}
U\cdot (S-\oneb_{d\times d})=\left(\begin{array}{ccc|ccc}
	& \Mcc\left(\frac{d+2}{2}\Delta_{\frac{d-2}{2}}-\oneb_{\frac{d-2}{2}\times\frac{d}{2}} \right)& & &\zerob_{\frac{d-2}{2}\times\frac{d}{2}}  &	\\ \hline
	&\zerob_{\frac{d-2}{2}\times\frac{d}{2}}  & & & \Mcc\left(\frac{d+2}{2}\Delta_{\frac{d-2}{2}}-\oneb_{\frac{d-2}{2}\times\frac{d}{2}} \right)&	\\ \hline
	0& \cdots&0 &1 & \cdots&1	\\ 
		1& \cdots &1 &1  &\cdots  &  1\\
\end{array}\right).
\end{equation*}
Since $G\cong \conv( U \cdot (S-\oneb_{d\times d}))$, the claim follows after projection on the first $d-1$ coordinates and by the definition of the join. We also note that the vertices of $G$  corresponding to the vertices of the dilated simplices are $\{c^{(2\ell+1)}~\colon~0\leq \ell \leq \frac{d}{2}, \ell\neq \frac{j+1}{2}\}$  and $\{c^{(2\ell)}~\colon~2\leq \ell \leq \frac{d}{2}+1, \ell\neq \frac{i+2}{2}\}$.\\


For fixed integers $1\leq i<j\leq d$ of different parity consider the facet $H=\{x\in \R^d~:~x_i+x_j\geq 1\}$ of $Q_{\partial (\sigma_{d+1})}$. Without loss of generality assume that $i$ is odd $j$ is even. By \Cref{cor: matrix P I and interior lattice point}, the vertices of $F$ are $\{c^{(\ell)}~\colon~\ell\in[d+2]\setminus\{i+2,j+2\}\}$.  We now consider the matrix $D\in\Z^{d\times d}$ whose $\ell$-th column equals $c^{(2\ell-1)}$ if $1\leq \ell\leq \frac{i+1}{2}$, $c^{(2\ell+1)}$ if  $\frac{i+3}{2}\leq \ell\leq \frac{d}{2}$, $c^{(2\ell-d)}$ if  $\frac{d}{2}+1\leq \ell\leq \frac{j+d}{2}$ and $c^{(2\ell+2-d)}$ if $\frac{j+d}{2}+1\le \ell\le d$. If we reorder the rows of $D$ by taking first the rows with odd index $k\in[d]\setminus\{i\}$, increasingly, followed by row $i$, followed by the rows with even index $\ell\in[d]\setminus \{j\}$, increasingly, followed by row $j$ as the last row, we obtain a matrix $S$, which looks as follows: \[
S=\left(\begin{array}{ccc|ccc}
	& \frac{d+2}{2}\cdot E'_{\frac{d}{2}}& & & \oneb'_{\frac{d}{2}}&\\ \hline
	0&\cdots&0&1&\cdots&1\\ \hline
		& \oneb'_{\frac{d}{2}}& & &\frac{d+2}{2}\cdot E'_{\frac{d}{2}}&\\ \hline
	1&\cdots&1&0&\cdots&0\\ 
\end{array}\right).
\]
 Clearly, $H\cong \conv(S)$. Let
\begin{equation*}
U=\left(\begin{array}{cccccr}
			&E_{\frac{d}{2}-1}\Bigg| & & \zerob^{\left(\frac{d}{2}-1\right)\times \left(\frac{d}{2}+1\right)}&    \\ \hline
			& \zerob'_{\frac{d}{2}}& &\Bigg|E_{\frac{d}{2}-1}&\Bigg|\zerob^{(\frac{d}{2}-1)\times 1}  &  \\ \hline
			& & & -e^\intercal_d& &  \\ \hline 
			& & & -(e_{\frac{d}{2}}+e_d)^\intercal & &  \\
		\end{array}\right)\in\Z^{d\times d}.
\end{equation*}
It is easy to see that $U$ is unimodular and a direct computation shows that
\begin{equation*}
U\cdot (S-\oneb_{d\times d})=\left(\begin{array}{ccc|ccc}
	& \Mcc\left(\frac{d+2}{2}\Delta_{\frac{d-2}{2}}-\oneb_{\frac{d-2}{2}\times\frac{d}{2}} \right)& & &\zerob_{\frac{d-2}{2}\times\frac{d}{2}}  &	\\ \hline
	&\zerob_{\frac{d-2}{2}\times\frac{d}{2}}  & & & \Mcc\left(\frac{d+2}{2}\Delta_{\frac{d-2}{2}}-\oneb_{\frac{d-2}{2}\times\frac{d}{2}} \right)&	\\ \hline
	0& \cdots&0 &1 & \cdots&1	\\ 
		1& \cdots &1 &1  &\cdots  &  1\\
\end{array}\right).
\end{equation*}
Since $H\cong \conv( U \cdot (S-\oneb_{d\times d}))$, the claim follows after projection on the first $d-1$ coordinates and by the definition of the join. We also note that the vertices of $H$  corresponding to the vertices of the dilated simplices are $\{c^{(2\ell+1)}~\colon~0\leq \ell \leq \frac{d}{2}, \ell\neq \frac{i+1}{2}\}$  and $\{c^{(2\ell)}~\colon~1\leq \ell \leq \frac{d}{2}+1, \ell\neq \frac{j+2}{2}\}$.
\end{proof}

\begin{proof}[Proof of \Cref{prop: facets as join} \emph{(ii)}]
 Let $d\geq1$ be an odd integer. We define vectors $u^{(1)},\ldots,\linebreak u^{d+2}\in\mathbb{R}^{d+1}$ by $u^{(\ell)}_k=d+1$ if $k=\ell-1$ and $u^{(\ell)}_k=(-1)^{k+\ell-1})$, otherwise.  By \Cref{lem: unimodular equivalence}, $u^{(1)},\ldots,u^{(d+2)}$ are the vertices of $\Ptd$. We now consider the matrix $E\in\Z^{(d+1)\times (d+2)}$ whose $\ell$-th column equals $b^{(2\ell-1)}$ if $1\leq \ell\leq \frac{d+3}{2}$ and $u^{(2\ell-(d+3))}$ if $\frac{d+3}{2}+1\le \ell\le d+2$. 
If we reorder the rows of $E$, by taking first the rows with even index and then the ones with odd index, increasingly, we obtain a matrix  $Q=(q_{k,\ell})\in\Z^{(d+1)\times (d+2)}$ with
\begin{itemize}
\item $q_{k,k+1}=d+1$ for $k\in [d+1]$, 
\item $q_{k,\ell}=1$ if $k\leq\frac{d+1}{2}$ and $\ell>\frac{d+3}{2}$, or $k>\frac{d+1}{2}$ and $\ell\leq\frac{d+3}{2}$
\item $q_{k,\ell}=-1$, otherwise.
\end{itemize}
Clearly, $\Ptd\cong \conv(Q)$. Let
\begin{equation*}	
    U=\left(\begin{array}{ccc|lcc}
		&E_{\frac{d+1}{2}} & & &\zerob_{\frac{d+1}{2}\times \frac{d+1}{2}} & \\ \hline
		& & &0 & & \\
		&\zerob_{\frac{d-1}{2}\times \frac{d+1}{2}}  & & \vdots&E_{\frac{d-1}{2}} & \\
		& & & 0& & \\\hline
		0&\cdots &0 &  1& \cdots&1 \\	
		\end{array}\right)\in\Z^{(d+1)\times(d+1)}.
	\end{equation*}
It is easy to see that $U$ is unimodular and a direct computation shows that
\begin{eqnarray*}
	U\cdot (Q-\oneb_{(d+1)\times (d+2)})=\left(\begin{array}{ccc|ccc}
			& \Mcc\left((d+2)\Delta_{\frac{d+1}{2}}-2\cdot\oneb\right)& & &\zerob &	\\ \hline
			&\zerob & & & \Mcc\left( (d+2)\Delta_{\frac{d-1}{2}}-2\cdot\oneb\right)&	\\ \hline
			0& \cdots&0 &1 & \cdots&1	\\ 
		\end{array}\right).
\end{eqnarray*} 
Since $\Ptd\cong \conv(U\cdot (Q-\oneb_{(d+1)\times (d+2)}))$, the claim follows by definition of the join.
\end{proof}



\bibliographystyle{plain}
\bibliography{references}

\end{document}